\newcommand{\rmv}[1]{}
\def\wh{\widehat}
\def\inv{^{-1}}
\def\p{\varphi}
\def\D{\mathrel{\mathscr D}} 
\def\R{\mathrel{{\mathscr R}}} 
\def\e<{\leq _{E}}
\def\ov#1{\ensuremath{\overline {#1}}}
\def\til#1{\ensuremath{\widetilde {#1}}}
\def\malce{\mathbin{\hbox{$\bigcirc$\rlap{\kern-8.3pt\raise0,50pt\hbox{$\mathtt{m}$}}}}}
\def\CC{\mathbb C}
\def\Ind{\mathrm{Ind}}
\def\Res{\mathrm{Res}}
\def\1sk{^{(1)}}
\def\to{\rightarrow}
\def\Thmname{Theorem}
\def\Propname{Proposition}
\def\Lemmaname{Lemma}
\def\Definitionname{Definition}
\newtheorem{Thm}{\Thmname}[section]
\newtheorem{Prop}[Thm]{\Propname}
\newtheorem{Lemma}[Thm]{\Lemmaname}
{\theoremstyle{definition}
\newtheorem{Def}[Thm]{\Definitionname}}
{\theoremstyle{remark}
\newtheorem{Rmk}[Thm]{Remark}}
\newtheorem{Cor}[Thm]{Corollary}
{\theoremstyle{remark}
\newtheorem{Example}[Thm]{Example}}
\theoremstyle{remark}
\theoremstyle{remark}
\newtheorem{Step}{Step}}
\numberwithin{equation}{section}
\title{A Groupoid Approach to Discrete Inverse Semigroup Algebras}
\author{Benjamin Steinberg}
\address{School of Mathematics and Statistics\\
Carleton University \\
1125 Colonel By Drive\\
Ottawa, Ontario  K1S 5B6 \\
Canada}
\thanks{The author was supported in part by NSERC.  He also gratefully acknowledges the support of DFG. Some of this work was done while the author was visiting Bar-Ilan in the summer of 2008}
\email{bsteinbg@math.carleton.ca}
\date{March 20, 2009}
\keywords{Inverse semigroups, \'etale groupoids, $C^*$-algebras, semigroup algebras}
\subjclass[2000]{22A22, 20M18, 18B40, 20M25, 16S36, 06E15}
\begin{document}
\begin{abstract}
Let $K$ be a commutative ring with unit and $S$ an inverse semigroup.  We show that the semigroup algebra $KS$ can be described as a convolution algebra of functions on the universal \'etale groupoid associated to $S$ by Paterson.  This result is a simultaneous generalization of the author's earlier work on finite inverse semigroups and Paterson's theorem for the universal $C^*$-algebra.  It provides a convenient topological framework for understanding the structure of $KS$, including the center and when it has a unit.  In this theory, the role of Gelfand duality is replaced by Stone duality.

Using this approach we are able to construct the finite dimensional irreducible representations of an inverse semigroup over an arbitrary field as induced representations from associated groups, generalizing the well-studied case of an inverse semigroup with finitely many idempotents.    More generally, we describe the irreducible representations of an inverse semigroup $S$ that can be induced from associated groups as precisely those satisfying a certain ``finiteness condition''.   This ``finiteness condition'' is satisfied, for instance, by all representations of an inverse semigroup whose image contains a primitive idempotent.
\end{abstract}

\maketitle
\tableofcontents

\section{Introduction}
It is by now well established in the $C^*$-algebra community that there is a close relationship between inverse semigroup $C^*$-algebras and \'etale groupoid $C^*$-algebras~\cite{Paterson,Exel,Renault,graphinverse,ultragraph,higherrank,resendeetale,strongmorita}.  More precisely, Paterson assigned to each inverse semigroup $S$ an \'etale (in fact, ample) groupoid $\mathscr G(S)$, called its universal groupoid, and showed that the universal and reduced $C^*$-algebras of $S$ and $\mathscr G(S)$ coincide~\cite{Paterson}.  On the other hand, if $\mathscr G$ is a discrete groupoid and $K$ is a unital commutative ring, then there is an obvious way to define a groupoid algebra $K\mathscr G$.  The author showed that if $S$ is an inverse semigroup with finitely many idempotents, then $KS\cong K\mathscr G$ for the so-called underlying groupoid $\mathscr G$ of $S$~\cite{mobius1,mobius2}; this latter groupoid coincides with the universal groupoid $\mathscr G(S)$ when $S$ has finitely many idempotents.  It therefore seems natural to conjecture that, for any inverse semigroup $S$, one has that $KS\cong K\mathscr G(S)$ for an appropriate definition of $K\mathscr G(S)$.  This is what we achieve in this paper.  We then proceed to use groupoids to establish a number of new results about inverse semigroup algebras, including a description of all the finite dimensional irreducible representations over a field as induced representations from groups, as well as of all simple modules over an arbitrary commutative ring satisfying a certain additional condition.

The idea behind our approach is to view $K$ as a topological ring by endowing it with the discrete topology.  One can then imitate the usual definition of continuous functions with compact support on an \'etale groupoid (note that we do not assume that groupoids are Hausdorff, so one must take the usual care with this). It turns out that \'etale groupoids are too general to deal with in the discrete context because they do not have enough continuous functions with compact support. However, the category of ample groupoids from~\cite{Paterson} is just fine for the task.  These groupoids have a basis of compact open sets and so they have many continuous maps to discrete spaces.  A key idea is that the role of Gelfand duality for commutative $C^*$-algebras can now be played by Stone duality for boolean rings.  Paterson's universal groupoid $\mathscr G(S)$ is an ample groupoid, and so fits nicely into this context.

The paper proceeds as follows.  First we develop the basic theory of the convolution algebra $K\mathscr G$ of an ample groupoid $\mathscr G$ over a commutative ring with unit $K$, including a description of its center.  We then discuss ample actions of inverse semigroups and the groupoid of germs construction.   The algebra of the groupoid of germs is shown to be a cross product (in an appropriate sense) of a commutative $K$-algebra with the inverse semigroup in the Hausdorff case.  The universal groupoid is constructed as the groupoid of germs of the spectral action of $S$, which is shown to be the terminal object in the category of boolean actions via Stone duality.  It is proved that the universal groupoid is Hausdorff if and only if the intersection of principal downsets in $S$ is finitely generated as a downset, yielding the converse to a result of Paterson~\cite{Paterson}.  The isomorphism of $KS$ with $K\mathscr G(S)$ is then established via the M\"obius inversion trick from~\cite{mobius1,mobius2}; Paterson's result for the universal $C^*$-algebra is obtained as a consequence of the case $K=\mathbb C$ via the Stone-Weierstrass theorem.  We believe the proof to be easier than Paterson's since we avoid the ardous detour through a certain auxiliary inverse semigroup that Paterson follows in order to use the theory of localizations~\cite{Paterson}.  Using the isomorphism with $KS$ with $K\mathscr G(S)$ we give a topological proof of a result of Crabb and Munn describing the center of the algebra of a free inverse monoid~\cite{Munncentre}.

To study simple $K\mathscr G$-modules, we associate to each unit $x$ of $\mathscr G$ induction and restriction functors between the categories of $KG_x$-modules and $K\mathscr G$-modules, where $G_x$ is the isotropy group of $x$.  It turns out that induction preserves simplicity, whereas restriction takes simple modules to either $0$ or simple modules.
This allows us to obtain an explicit parameterization of the finite dimensional simple $K\mathscr G$-modules as induced representations from isotropy groups in the case that $K$ is a field.  More generally, we can construct all simple $K\mathscr G$-modules satisfying an additional finiteness condition as induced representations for $K$ an arbitrary commutative ring with unit.  The methods are reminiscent of the theory developed by Munn and Ponizovsky for finite semigroups~\cite{CP,oknisemigroupalgebra}, as interpreted through~\cite{myirreps}.

The final section of the paper applies the results to inverse semigroups via their universal groupoids.  In particular, we parameterize all the finite dimensional simple $KS$-modules for an inverse semigroup $S$ in terms of group representations.  Munn gave a different construction of the finite dimensional irreducible representations of an arbitrary inverse semigroup~\cite{Munnarb} via cutting to ideals and reducing to the case of $0$-simple inverse semigroups; it takes him a bit of argument to deduce the classical statements for semigroups with finitely many idempotents from this approach.   We state our result in a groupoid-free way, although the proof relies on groupoids.  As a corollary we give necessary and sufficient conditions for the finite dimensional irreducible representations to separate points of $S$. Our techniques also construct all the simple $KS$-modules for inverse semigroups $S$ satisfying descending chain condition on idempotents, or whose idempotents are central or form a descending chain isomorphic to $(\mathbb N,\geq)$.

\section{\'Etale and ample groupoids}
By a \emph{groupoid} $\mathscr G$, we mean a small category in which every arrow is an isomorphism.  Objects will be identified with the corresponding units and the space of units will be denoted $\mathscr G^0$.  Then, for $g\in \mathscr G$, the domain and range maps are given by $d(g)=g\inv g$ and $r(g)=gg\inv$, respectively.  A \emph{topological groupoid} is a groupoid whose underlying set is equipped with a topology making the product and inversion continuous (where the set of composable pairs is given the induced topology from the product topology).

In this paper, we follow the usage of Bourbaki and reserve the term compact to mean a Hausdorff space with the Heine-Borel property.  Notice that a locally compact space need not be Hausdorff. By a \emph{locally compact groupoid}, we mean a topological groupoid $\mathscr G$ that is locally compact and whose unit space $\mathscr G^0$ is locally compact Hausdorff in the the induced topology.
A locally compact groupoid $\mathscr G$ is said to be \emph{\'etale} if  the domain map $d\colon \mathscr G\to \mathscr G^0$ is \'etale, that is, a local homeomorphism.  We do not assume that $\mathscr G$ is Hausdorff.    For basic properties of \'etale groupoids (also called $r$-discrete groupoids), we refer to the treatises~\cite{Exel,Paterson,Renault}.  We principally follow~\cite{Exel} in terminology.  Fix an \'etale groupoid $\mathscr G$ for this section.  A basic property of \'etale groupoids is that their unit space is open~\cite[Proposition 3.2]{Exel}.

\begin{Prop}\label{unitsareopen}
The subspace $\mathscr G^0$ is open in $\mathscr G$.
\end{Prop}

Of critical importance is the notion of a slice (or $\mathscr G$-set, or local bissection).

\begin{Def}[Slice]
A \emph{slice} $U$ is an open subset of $\mathscr G$ such that $d|_U$ and $r|_U$ are injective (and hence homeomorphisms since $d$ and $r$ are open).  The set of all slices of $\mathscr G$ is denoted $\mathscr G^{op}$.
\end{Def}

One can view a slice as the graph of a partial homeomorphism between $d(U)$ and $r(U)$ via the topological embedding $U\hookrightarrow d(U)\times r(U)$ sending $u\in U$ to $(d(u),r(u))$.  Notice that any slice is locally compact Hausdorff in the induced topology, being homeomorphic to a subspace of $\mathscr G^0$.

An \emph{inverse semigroup} is a semigroup $S$ so that, for all $s\in S$, there exists a unique $s^*\in S$ so that $ss^*s=s$ and $s^*ss^*=s^*$.  The set $E(S)$ of idempotents of $S$ is a commutative subsemigroup; it is ordered by $e\leq f$ if and only if $ef=e$.  With this ordering $E(S)$ is a meet semilattice with the meet given by the product.  Hence, it is often referred to as the \emph{semilattice of idempotents} of $S$.  The order on $E(S)$ extends to $S$ as the so-called \emph{natural partial order} by putting $s\leq t$ if $s=et$ for some idempotent $e$ (or equivalently $s=tf$ for some idempotent $f$).   This is equivalent to $s=ts^*s$ or $s=ss^*t$.  If $e\in E(S)$, then the set $G_e=\{s\in S\mid ss^*=e=s^*s\}$ is a group, called the \emph{maximal subgroup} of $S$ at $e$.  Idempotents $e,f$ are said to be $\mathscr D$-equivalent, written $e\D f$, if there exists $s\in S$ so that $e=s^*s$ and $f=ss^*$; this is the analogue of von Neumann-Murray equivalence.  See the book of Lawson~\cite{Lawson} for details.

\begin{Prop}
The slices form a basis for the topology of $\mathscr G$.  The set $\mathscr G^{op}$ is an inverse monoid under setwise multiplication.  The inversion is also setwise and the natural partial order is via inclusion.  The semilattice of idempotents is the topology of $\mathscr G^0$.
\end{Prop}
\begin{proof}
See~\cite[Propositions 3.5 and 3.8]{Exel}.
\end{proof}

A particularly important class of \'etale groupoids is that of ample groupoids~\cite{Paterson}.

\begin{Def}[Ample groupoid]
An \'etale groupoid is called \emph{ample} if the compact slices form a basis for its topology.
\end{Def}

One can show that the compact slices also form an inverse semigroup~\cite{Paterson}.  The inverse semigroup of compact slices is denoted $\mathscr G^{a}$.  The idempotent set of $\mathscr G^a$ is the semilattice of compact open subsets of $\mathscr G^0$.  Notice that if $U\in \mathscr G^a$, then any clopen subset $V$ of $U$ also belongs to $\mathscr G^a$.

Since we shall be interested in continuous functions with compact support into discrete rings, we shall restrict our attention to ample groupoids in order to ensure that we have ``enough'' continuous functions with compact support.  So from now on $\mathscr G$ is an ample groupoid.  To study ample groupoids it is convenient to discuss generalized boolean algebras and Stone duality.

\begin{Def}[Generalized boolean algebra]
A \emph{generalized boolean algebra} is a poset $P$ admitting finite (including empty) joins and non-empty finite meets so that the meet distributes over the join and if $a\leq b$, then there exists $x\in P$ so that $a\wedge x=0$ and $a\vee x=b$ where $0$ is the bottom of $P$.  Then, given $a,b\in P$ one can define the relative complement $a\setminus b$ of $b$ in $a$ to be the unique element $x\in P$ so that $(a\wedge b)\vee x=a$ and $a\wedge b\wedge x=0$.   Morphisms of generalized boolean algebras are expected to preserve finite joins and finite non-empty meets.  A generalized booleam algebra with a maximum (i.e., empty meet) is called a \emph{boolean algebra}.
\end{Def}

It is well known that a generalized boolean algebra is the same thing as a boolean ring.  A \emph{boolean ring} is a ring $R$ with idempotent multiplication. Such rings are automatically commutative of characteristic $2$.  The multiplicative semigroup of $R$ is then a semilattice, which is in fact a generalized boolean algebra.  The join is given by $a\vee b = a+b-ab$ and the relative complement by $a\setminus b = a-ab$.  Conversely, if $B$ is a generalized boolean algebra, we can place a boolean ring structure on it by using the meet as multiplication and the symmetric difference $a+b=(a\setminus b)\vee (b\setminus a)$ as the addition.  Boolean algebras correspond in this way to unital boolean rings.  For example, $\{0,1\}$ is a boolean algebra with respect to its usual ordering.  The corresponding boolean ring is the two-element field $\mathbb F_2$. See~\cite{halmosnew} for details.

\begin{Def}[Locally compact boolean space]
A Hausdorff space $X$ is called a \emph{locally compact boolean space} if it has a basis of compact open sets~\cite{halmosnew}.
\end{Def}

It is easy to see that the set $B(X)$ of compact open subspaces of any Hausdorff space $X$ is a generalized boolean algebra (and is a boolean algebra if and only if $X$ is compact). Restriction to the case of locally compact boolean spaces gives all generalized boolean algebras.  In detail, if $A$ is a generalized boolean algebra and $\mathrm{Spec}(A)$ is the set of non-zero morphisms $A\to \{0,1\}$ endowed with the subspace topology from $\{0,1\}^A$, then $\mathrm{Spec}(A)$ is a locally compact boolean space with $B(\mathrm{Spec}(A))\cong A$.  Dually, if $X$ is a locally compact boolean space, then $X\cong \mathrm{Spec}(B(X))$.  In fact, $B$ and $\mathrm{Spec}$ give a duality between the categories of locally compact boolean spaces with proper continuous maps and generalized boolean algebras:  this is the famous Stone duality. If $\psi\colon X\to Y$ is a proper continuous map of locally compact boolean spaces, then $\psi\inv\colon B(Y)\to B(X)$ is a homomorphism of generalized boolean algebras; recall that a continuous map is \emph{proper} if the preimage of each compact set is compact. If $\p\colon A\to A'$ is a morphism of generalized boolean algebras, then $\wh{\p}\colon \mathrm{Spec}(A')\to \mathrm{Spec}(A)$ is given by $\psi\mapsto \psi\p$.  The homeomorphism $X\to \mathrm{Spec}(B(X))$ is given by $x\mapsto \p_x$ where $\p_x(U) = \chi_U(x)$.  The isomorphism $A\to B(\mathrm{Spec}(A))$ sends $a$ to $D(a)=\{\p\mid \p(a)=1\}$.
The reader is referred to~\cite{halmosnew,stonespace2} for further details.

A key example for us arises from the consideration of ample groupoids.  If $\mathscr G$ is an ample groupoid, then $\mathscr G^0$ is a locally compact boolean space and $B(\mathscr G^0)=E(\mathscr G^a)$.  In fact, one has the following description of ample groupoids.

\begin{Prop}
An \'etale groupoid $\mathscr G$ is ample if and only if $\mathscr G^0$ is a locally compact boolean space.
\end{Prop}
\begin{proof}
If $\mathscr G$ is ample, we already observed that $\mathscr G^0$ is a locally compact boolean space.  For the converse, since $\mathscr G^{op}$ is a basis for the topology it suffices to show that each $U\in \mathscr G^{op}$ is a union of compact slices.  But $U$ is homeomorphic to $d(U)$ via $d|_U$.  Since $\mathscr G^0$ is a locally compact boolean space, we can write $d(U)$ as a union of compact open subsets of $\mathscr G^0$ and hence we can write $U$ as union of compact open slices by applying $d|_U\inv$.
\end{proof}

In any poset $P$, it will be convenient to use, for $p\in P$, the notation
\begin{align*}
p^{\uparrow}&=\{q \in P\mid q\geq p\}\\
p^{\downarrow}&=\{q\in P\mid q\leq p\}.
\end{align*}

\begin{Def}[Semi-boolean algebra]
A poset $P$ is called a \emph{semi-boolean algebra} if each principal downset $p^{\downarrow}$ with $p\in P$ is a boolean algebra.
\end{Def}

It is immediate that every generalized boolean algebra is a semi-boolean algebra.  A key example for us is the inverse semigroup $\mathscr G^a$ for an ample groupoid $\mathscr G$.

\begin{Prop}\label{semibooleanalgebra}
Let $\mathscr G$ be an ample groupoid.  Then $\mathscr G^a$ is a semi-boolean algebra.  Moreover, the following are equivalent:
\begin{enumerate}
\item $\mathscr G$ is Hausdorff;
\item  $\mathscr G^a$ is closed under pairwise intersections;
\item $\mathscr G^a$ is closed under relative complements.
\end{enumerate}
\end{Prop}
\begin{proof}
Let $U\in \mathscr G^a$.  Then the map $d\colon U\to d(U)$ gives an isomorphism between the posets $U^{\downarrow}$ and $B(d(U))$.  Since $B(d(U))$ is a boolean algebra, this proves the first statement.  Suppose that $\mathscr G$ is Hausdorff and $U,V\in \mathscr G^a$.  Then $U\cap V$ is a clopen subset of $U$ and hence belongs to $\mathscr G^a$.  If $\mathscr G^a$ is closed under pairwise intersections and $U,V\in \mathscr G^a$, then $U\cap V$ is compact open and so $U\cap V$ is clopen in $U$.  Then $U\setminus V = U\setminus (U\cap V)$ is a clopen subset of $U$ and hence belongs to $\mathscr G^a$.  Finally, suppose that $\mathscr G^a$ is closed under relative complements and let $g,h\in \mathscr G$.  As $\mathscr G^a$ is a basis for the topology on $\mathscr G$, we can find slices $U,V\in \mathscr G^a$ with $g\in U$ and $h\in V$.  If $g,h\in U$ or $g,h\in V$, then we can clearly separate them by disjoint open sets since $U$ and $V$ are Hausdorff.  Otherwise, $g\in U\setminus V$, $h\in V\setminus U$ and these are disjoint open sets as $\mathscr G^a$ is closed under relative complements.  This completes the proof.
\end{proof}

\section{The algebra of an ample groupoid}
Fix for this section an ample groupoid $\mathscr G$.
Following the idea of Connes~\cite{Connes}, we now define the space of continuous $K$-valued functions with compact support on $\mathscr G$ where $K$ is a commutative ring with unit.

\begin{Def}[$K\mathscr G$]\label{definecompactsupport}
If $\mathscr G$ is an ample groupoid and $K$ is a commutative ring with unit equipped with the discrete topology, then $K\mathscr G$ is the space of all $K$-valued functions on $\mathscr G$ spanned by functions $f\colon \mathscr G\to K$ such that:
\begin{enumerate}
\item There is an open Hausdorff subspace $V$ in $\mathscr G$ so that $f$ vanishes outside $V$;
\item $f|_V$ is continuous with compact support.
\end{enumerate}
We call $K\mathscr G$ the algebra of continuous $K$-valued functions on $\mathscr G$ with compact support (but the reader is cautioned that if $\mathscr G$ is not Hausdorff, then $K\mathscr G$ will contain discontinuous functions).
\end{Def}

For example, if $\mathscr G$ has the discrete topology, then one can identify $K\mathscr G$ with the vector space of all functions of finite support on $\mathscr G$.  A basis then consists of the functions $\delta_g$ with $g\in \mathscr G$. In general, as $K$ is discrete, the support of a function $f$  as in (1) will be, in fact, compact open and so one may take $V$ to be the support.

\begin{Prop}\label{supportisopen}
With the notation of Definition~\ref{definecompactsupport}, one can always choose $V$ to be compact open so that $\mathrm{supp}(f)=V=f^{-1}(K\setminus \{0\})$.
\end{Prop}
\begin{proof}
Let $f$ and $V$ be as in (1) and (2).
Since $K$ is discrete and $f(\mathrm{supp}(f|_V))$ is compact, it must in fact be finite.  Hence $f^{-1}(f(V)\setminus \{0\})$ is a clopen subset of $V$ contained in $\mathrm{supp}(f)$ and so is compact open.  It follows that $\mathrm{supp}(f|_V)=f^{-1}(K\setminus \{0\})$ is compact open and may be used in place of $V$ in (1) of Definition~\ref{definecompactsupport}.
\end{proof}

Notice that if $\mathscr G$ is not Hausdorff, it will have compact open subsets that are not closed (cf.~Proposition~\ref{semibooleanalgebra}).  The corresponding characteristic function of such a compact open will be discontinuous, but belong to $K\mathscr G$.
It turns out that the algebraic structure of $K\mathscr G$ is controlled by $\mathscr G^a$.  We start at the level of $K$-modules.

\begin{Prop}\label{characteristicbasis}
The space $K\mathscr G$ is spanned by the characteristic functions of elements of $\mathscr G^a$.
\end{Prop}
\begin{proof}
Evidently, if $U\in \mathscr G^a$, then $\chi_U\in K\mathscr G$.
Let $A$ be the subspace spanned by such characteristic functions.  By Proposition~\ref{supportisopen}, it suffices to show that if $f\colon \mathscr G\to K$ is a function so that $V=f\inv(K\setminus \{0\})$ is compact open and $f|_V$ is continuous, then $f\in A$.  Since $K$ is discrete and $V$ is compact, we have $f(V)\setminus \{0\}=\{c_1,\ldots,c_r\}$ for certain $c_i\in K\setminus \{0\}$ and the $V_i=f^{-1}(c_i)$, for $i=1,\ldots, r$, are disjoint compact open subsets of $V$.  Then $f=c_1\chi_{V_1}+\cdots+c_r\chi_{V_r}$ and so it suffices to show that if $U$ is a compact open subset of $\mathscr G$, then $\chi_U\in A$.

Since $\mathscr G^a$ is a basis for the topology of $\mathscr G$ and $U$ is compact open, it follows $U=U_1\cup \cdots \cup U_r$ with the $U_i\in \mathscr G^a$.  Since $U_i\subseteq U$, for $i=1,\ldots,r$, and $U$ is Hausdorff, it follows that any finite intersection of elements of the set $\{U_1,\ldots, U_r\}$  belongs to $\mathscr G^a$.
The principle of inclusion-exclusion yields:
\begin{equation}\label{charfuncs}
\chi_U=\chi_{U_1\cup\cdots\cup U_n} = \sum_{k=1}^n (-1)^{k-1}\sum_{\substack{I\subseteq \{1,\ldots,n\}\\ |I|=k}}\chi_{\bigcap_{i\in I}U_i}
\end{equation}
Hence $\chi_U\in A$, as required.
\end{proof}

We now define the convolution product on $K\mathscr G$ in order to make it a $K$-algebra.

\begin{Def}[Convolution]
Let $f,g\in K\mathscr G$.  Then their \emph{convolution} $f\ast g$ is defined, for $x\in \mathscr G$, by \[f\ast g(x) = \sum_{y\in d^{-1}d(x)}f(xy\inv)g(y).\]
\end{Def}

Of course, one must show that this sum is really finite and $f\ast g$ belongs to $K\mathscr G$, which is the content of the following proposition.

\begin{Prop}\label{convolutionwelldefined}
Let $f,g\in K\mathscr G$. Then:
\begin{enumerate}
\item $f\ast g\in K\mathscr G$;
\item If $f,g$ are continuous with compact support on $U,V\in \mathscr G^a$, respectively, then $f\ast g$ is continuous with compact support on $UV$;
\item If $U,V\in \mathscr G^a$, then $\chi_U\ast \chi_V=\chi_{UV}$;
\item If $U\in \mathscr G^a$, then $\chi_{U\inv}(x) = \chi_U(x\inv)$.
\end{enumerate}
\end{Prop}
\begin{proof}
Since the characteristic functions of elements of $\mathscr G^a$ span $K\mathscr G$ by Proposition~\ref{characteristicbasis}, it is easy to see that (1) and (2) are consequences of (3).  We proceed to the task at hand: establishing (3).

Indeed, we have
\begin{equation}\label{convolutionofslice}
\chi_U\ast \chi_V(x) = \sum_{y\in d\inv d(x)}\chi_U(xy\inv)\chi_V(y).
\end{equation}
Suppose first $x\in UV$.  Then we can find $a\in U$ and $b\in V$ so that $x=ab$.  Therefore, $a=xb\inv$, $d(x)=d(b)$ and $\chi_U(xb\inv)\chi_V(b)=1$.  Moreover, since $U$ and $V$ are slices, $b$ is the unique element of $V$ with $d(x)=d(b)$. Thus the right hand side of \eqref{convolutionofslice} is $1$.

Conversely, suppose $x\notin UV$ and let $y\in d\inv d(x)$.  If $y\notin V$, then $\chi_V(y)=0$.  On the other hand, if $y\in V$, then $xy\inv\notin U$, for otherwise we would have $x=xy\inv\cdot y\in UV$.  Thus $\chi_U(xy\inv)=0$.  Therefore, each term of the right hand side of \eqref{convolutionofslice} is zero and so $\chi_U\ast \chi_V = \chi_{UV}$, as required.

Statement (4) is trivial.
\end{proof}

The associativity of convolution is a straightforward, but tedious exercise~\cite{Exel,Paterson}.

\begin{Prop}
Let $K$ be a commutative ring with unit and $\mathscr G$ an ample groupoid.  Then $K\mathscr G$ equipped with convolution is a $K$-algebra.
\end{Prop}

If $K=\mathbb C$, we make $\CC \mathscr G$ into a $\ast$-algebra by defining $f^*(x) = \ov{f(x\inv)}$.

\begin{Cor}
The map $\p\colon \mathscr G^a\to K\mathscr G$ given by $\p(U) = \chi_U$ is a homomorphism.
\end{Cor}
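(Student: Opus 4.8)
The plan is to recognize that this corollary is a direct repackaging of Proposition~\ref{convolutionwelldefined}. The domain $\mathscr G^a$ is the inverse semigroup of compact slices, whose binary operation is setwise multiplication, while the codomain $K\mathscr G$ is regarded as a monoid under convolution. Hence the assertion that $\varphi$ is a homomorphism means precisely that $\varphi(UV)=\varphi(U)\ast\varphi(V)$ for all $U,V\in\mathscr G^a$, i.e.\ that $\chi_{UV}=\chi_U\ast\chi_V$.

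First I would note that $\varphi$ is well defined: each $U\in\mathscr G^a$ is a compact open slice, so $\chi_U\in K\mathscr G$, as observed at the start of the proof of Proposition~\ref{characteristicbasis}. Second, I would invoke Proposition~\ref{convolutionwelldefined}(3), which states exactly $\chi_U\ast\chi_V=\chi_{UV}$. Since the compact slices form an inverse semigroup under setwise product, $UV\in\mathscr G^a$, so both $\chi_{UV}$ and $\chi_U\ast\chi_V$ lie in the image of $\varphi$, and the identity reads $\varphi(UV)=\varphi(U)\ast\varphi(V)$. This is the entire content of the homomorphism property.

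I would also record that $\varphi$ is compatible with the involutions. The inverse semigroup $\mathscr G^a$ carries setwise inversion $U\mapsto U^{-1}$, and in the case $K=\CC$ the algebra $K\mathscr G$ is a $\ast$-algebra with $f^*(x)=\ov{f(x\inv)}$. Because $\chi_U$ takes values in $\{0,1\}$, Proposition~\ref{convolutionwelldefined}(4) gives $\chi_U^*(x)=\ov{\chi_U(x\inv)}=\chi_{U\inv}(x)$, whence $\varphi(U\inv)=\varphi(U)^*$. There is no substantive obstacle here: the finiteness of the convolution sum and the slice computation underlying $\chi_U\ast\chi_V=\chi_{UV}$ have already been carried out in Proposition~\ref{convolutionwelldefined}, and the only remaining task is the bookkeeping identification of the two multiplicative structures, which is immediate.
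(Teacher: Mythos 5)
Your proposal is correct and matches the paper's reasoning: the corollary is stated in the paper without proof precisely because it is an immediate consequence of Proposition~\ref{convolutionwelldefined}(3), which gives $\chi_U\ast\chi_V=\chi_{UV}$ for $U,V\in \mathscr G^a$, exactly the identity $\p(UV)=\p(U)\ast\p(V)$ you isolate. Your additional remark on compatibility with the involution when $K=\CC$ (via Proposition~\ref{convolutionwelldefined}(4)) is accurate but not needed for the statement as given.
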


\begin{Rmk}[Groups]
If $\mathscr G^0$ is a singleton, so that $\mathscr G$ is a discrete group, then $K\mathscr G$ is the usual group algebra.
\end{Rmk}

\begin{Rmk}[Locally compact boolean spaces]
In the case $\mathscr G=\mathscr G^0$, one has that $K\mathscr G$ is the subalgebra of $K^{\mathscr G}$ spanned by the characteristic functions of compact open subsets of $\mathscr G$ equipped with the pointwise product.  If $K=\mathbb F_2$, then $K\mathscr G\cong B(\mathscr G^0)$ viewed as a boolean ring.
\end{Rmk}

\begin{Rmk}[Discrete groupoids]\label{discretegroupoid}
Notice that if $\mathscr G$ is a discrete groupoid and $g\in \mathscr G$, then $\{g\}\in \mathscr G^a$ and $\delta_g=\chi_{\{g\}}$.   It follows easily that \[\delta_g\ast \delta_h= \begin{cases} \delta_{gh} & d(g)=r(h)\\ 0 &\text{else}.\end{cases}\]  Thus $K\mathscr G$ can be identified with the $K$-algebra having basis $\mathscr G$ and whose product extends that of $\mathscr G$ where we interpret undefined products as $0$.  This is exactly the groupoid algebra considered, for example, in~\cite{mobius1,mobius2}.
\end{Rmk}

Propositions~\ref{characteristicbasis} and~\ref{convolutionwelldefined} imply that $K\mathscr G$ is a quotient of the semigroup algebra $K\mathscr G^a$.  Clearly $K\mathscr G$ satisfies the relations $\chi_{U\cup V}=\chi_U+\chi_V$ whenever $U,V\in B(\mathscr G^0)$ with $U\cap V=\emptyset$.  We show that these relations define $K\mathscr G$ as a quotient of $K\mathscr G^a$ in the case that $\mathscr G$ is Hausdorff.  This result should hold in general (in analogy with the analytic setting~\cite{Paterson}), but so far we have been unsuccessful in proving it. First we need a definition.  If $U,V\in \mathscr G^a$, we say $U$ is \emph{orthogonal} to $V$, written $U\perp V$, if $UV\inv =\emptyset=U\inv V$.  In this case $U\cup V$ is a disjoint union and belongs to $\mathscr G^a$.  Indeed, $U\inv UV\inv V=\emptyset=UU\inv VV\inv$ and so $U,V$ have disjoint images under both $d$ and $r$.  Thus $d,r$ restrict to  homeomorphisms of $U\cup V$ with $U\inv U\cup V\inv V$ and $UU\inv\cup VV\inv$, respectively.  Consequently, $U\cup V$ is a compact open slice.   More generally, if $U_1,\ldots, U_n\in \mathscr G^a$ are pairwise orthogonal, then the union $U_1\cup\cdots \cup U_n$ is disjoint and a compact open slice. See~\cite{Paterson} for details.

\begin{Thm}\label{presentation}
Let $\mathscr G$ be a Hausdorff ample groupoid.  Then $K\mathscr G=K\mathscr G^a/I$ where $I$ is the ideal generated by all elements $U+V-(U\cup V)$ where $U,V$ are disjoint elements of $B(\mathscr G^0)$.
\end{Thm}
\begin{proof}
First of all, Propositions~\ref{characteristicbasis} and~\ref{convolutionwelldefined} yield a surjective homomorphism $\lambda\colon K\mathscr G^a\to K\mathscr G$ given by $U\mapsto \chi_U$ and evidentally $I\subseteq \ker \lambda$.  We establish the converse via several intermediate steps.  First note that $\emptyset\in I$ since $\emptyset = \emptyset+\emptyset -(\emptyset\cup \emptyset)\in I$.

\begin{Step}\label{additive}
Suppose that $U\perp V$ with $U,V\in \mathscr G^a$.  Then $U+V-(U\cup V)\in I$.
\end{Step}
\begin{proof}
Note $U\cup V = (U\cup V)(U\inv U\cup V\inv V)$ and $U\inv U\cap V\inv V=\emptyset$.  Thus \[U+V-(U\cup V) = (U\cup V)[U\inv U+V\inv V-(U\inv U\cup V\inv V)]\in I\] as required.
\end{proof}

The next step uses that $\mathscr G^a$ is closed under pairwise intersection and relative complement in the Hausdorff setting (Proposition~\ref{semibooleanalgebra}).

\begin{Step}\label{writeasunion}
 If $U_1,\ldots, U_n\in \mathscr G^a$, then we can find $V_1,\ldots, V_m\in \mathscr G^a$ so that:
\begin{enumerate}
\item $V_i\cap V_j=\emptyset$ all $i\neq j$;
\item $V_1\cup\cdots\cup V_m = U_1\cup\cdots\cup U_n$;
\item For all $1\leq i\leq m$ and $1\leq j\leq n$, either $V_i\subseteq U_j$ or $V_i\cap U_j=\emptyset$.
\end{enumerate}
\end{Step}
\begin{proof}
We induct on $n$, the case $n=1$ being trivial as we can take $m=1$ and $V_1=U_1$.  Assume the statement for $n-1$ and find pairwise disjoint elements $W_1,\ldots, W_m\in \mathscr G^a$ so that $W_1\cup\cdots \cup W_m=U_1\cup\cdots\cup U_{n-1}$ and, for all $1\leq i\leq m$ and $1\leq j\leq n-1$, either $W_i\subseteq U_j$ or $W_i\cap U_j=\emptyset$.  Set $V_i=W_i\cap U_n$, $V_i'=W_i\setminus U_n$ (for $i=1,\ldots,m$) and put $V_{m+1} = (U_n\setminus W_1)\cap \cdots \cap (U_n\setminus W_m)= U_n\setminus (W_1\cup\cdots\cup W_m)$.  All these sets are elements of $\mathscr G^a$,  some of which may be empty.

It is clear from the construction that the $V_i$ ($1\leq i\leq m+1$) and $V_i'$ ($1\leq i\leq m$) form a collection of pairwise disjoint subsets.  Moreover,
\begin{align*}
V_1\cup V_1'\cup\cdots\cup  V_m\cup V'_m\cup V_{m+1} &= W_1\cup\cdots\cup W_m\cup U_n\setminus (W_1\cup\cdots\cup W_m)\\
 &= U_1\cup\cdots \cup U_n
\end{align*}
and so the second condition holds.

For the final condition, first note that $V_{m+1}\subseteq U_n$ and intersects no other $U_i$.  On the other hand, for $1\leq i\leq m$, we have $V_i\subseteq U_n$ and $V_i'\cap U_n=\emptyset$.  For any $1\leq j\leq n-1$ and $1\leq i\leq m$, as $V_i,V_i'\subseteq W_i$, we have that $V_i\cap U_j\neq \emptyset$ implies $V_i\subseteq W_i\subseteq U_j$ and similarly $V_i'\cap U_j\neq \emptyset$ implies $V_i'\subseteq W_i\subseteq U_j$.  This establishes Step~\ref{writeasunion}.
\end{proof}

Our next step is an easy observation.
\begin{Step}\label{disjointgivesperp}
Suppose that $U,V\in \mathscr G^a$ are disjoint and have a common upper bound $W\in \mathscr G^a$. Then $U\perp V$.
\end{Step}
\begin{proof}
Suppose that $UV\inv \neq \emptyset$.  Then there exists $u\in U$ and $v\in V$ with $d(u)=d(v)$.  But $u,v\in W$ then implies $u=v$.  This contradicts that $U\cap V=\emptyset$.  The proof that $U\inv V=\emptyset$ is dual.
\end{proof}

We may now complete the proof.  Suppose $0=\sum_{i=1}^n c_i\chi_{U_i}$ with $c_i\in K$ and $U_i\in \mathscr G^a$, for $1\leq i\leq n$.  Choose $V_1,\ldots, V_m\in \mathscr G^a$, as per Step~\ref{writeasunion}.  Then, for each $j$, we can write $U_j = V_{i_1}\cup \cdots\cup V_{i_{k_j}}$ for certain indices $i_1,\ldots,i_{k_j}$.  Since the $V_{i_r}$ are pairwise disjoint subsets of the slice $U_j$, they are in fact mutually orthogonal by Step~\ref{disjointgivesperp}.  Repeated application of Step~\ref{additive} now yields that $U_j +I= V_{i_1}+\cdots+V_{i_{k_j}}+I$.  On the other hand, since the union is disjoint, clearly  $\chi_{U_j}= \chi_{V_{i_1}}+\cdots+\chi_{V_{i_{k_j}}}$.  We conclude that there exist  $d_j\in K$, for $j=1,\ldots, m$, so that $\sum_{i=1}^n c_iU_i+I = \sum_{i=1}^m d_jV_j+I$ and $0=\sum_{i=1}^n c_i\chi_{U_i}=\sum_{i=1}^m d_j\chi_{V_j}$.  But since the $V_j$ are disjoint, this immediately yields $d_1=\cdots=d_m=0$ and so $\sum_{i=1}^n c_iU_i\in I$, as required.
\end{proof}

Our next goal is to show that $K\mathscr G$ is unital if and only if $\mathscr G^0$ is compact.
\begin{Prop}\label{unital}
The $K$-algebra $K\mathscr G$ is unital if and only if $\mathscr G^0$ is compact.
\end{Prop}
\begin{proof}
 Suppose first $\mathscr G^0$ is compact.  Since it is open in the relative topology by Proposition~\ref{unitsareopen}, it follows that $u=\chi_{\mathscr G^0}\in K\mathscr G$.  Now if $f\in K\mathscr G$, then we compute
\[f\ast u(x) = \sum_{y\in d\inv d(x)}f(xy\inv)u(y) = f(x)\] since $d(x)$ is the unique element of $\mathscr G^0$ in $d\inv d(x)$.  Similarly, \[u\ast f(x) =  \sum_{y\in d\inv d(x)}u(xy\inv)f(y) = f(x)\] since $xy\inv \in\mathscr G^0$ implies $x=y$.  Thus $u$ is the multiplicative identity of $\mathscr G$.

Conversely, suppose $u$ is the multiplicative identity.  We first claim that $u=\chi_{\mathscr G^0}$.  Let $x\in \mathscr G$.     Choose a compact open set $U\subseteq \mathscr G^0$ with $d(x)\in U$. Suppose  first $x\notin \mathscr G^0$. Then \[0=\chi_U(x) = u\ast \chi_U(x) = \sum_{y\in d\inv d(x)}u(xy\inv)\chi_U(y) = u(x)\] since $\{d(x)\} = U\cap d\inv d(x)$.  Similarly, if $x\in \mathscr G^0$, then  we have \[1=\chi_U(x) = u\ast \chi_U(x) = \sum_{y\in d\inv d(x)}u(xy\inv)\chi_U(y) = u(x).\]  So we must show that $\chi_{\mathscr G^0}\in K\mathscr G$ implies that $\mathscr G^0$ is compact.

By Proposition~\ref{characteristicbasis}, there exist $U_1,\ldots, U_k\in \mathscr G^a$ and $c_1,\ldots, c_k\in K$ so that $\chi_{\mathscr G^0} = c_1\chi_{U_1}+\cdots+c_k\chi_{U_k}$.  Thus $\mathscr G^0\subseteq U_1\cup \cdots\cup  U_k$.  But then $\mathscr G^0= d(U_1)\cup \cdots\cup d(U_k)$.  But each $d(U_i)$ is compact, being homeomorphic to $U_i$, so $\mathscr G^0$ is compact, as required.
\end{proof}

The center of $K\mathscr G$ can be described by functions that are constant on conjugacy classes, analogously to the case of groups.

\begin{Def}[Class function]
Define $f\in K\mathscr G$ to be a \emph{class function} if:
\begin{enumerate}
\item $f(x)\neq 0$ implies $d(x)=r(x)$;
\item $d(x)=r(x)=d(z)$ implies $f(zxz\inv)=f(x)$.
\end{enumerate}
\end{Def}

\begin{Prop}
The center of $K\mathscr G$ is the set of class functions.
\end{Prop}
\begin{proof}
Suppose first that $f$ is a class function and $g\in K\mathscr G$.    Then
\begin{equation}\label{findcenter}
f\ast g(x) = \sum_{y\in d^{-1}d(x)}f(xy\inv)g(y) = \sum_{y\in d^{-1}d(x)\cap r^{-1}r(x)}f(xy\inv)g(y)
\end{equation}
since $f(xy\inv)=0$ if $r(x)=r(xy\inv)\neq d(xy\inv) = r(y)$.  But $f(xy\inv) = f(y(y\inv x)y\inv))=f(y\inv x)$ since $f$ is a class function and $d(y\inv x)=d(x)=d(y)=r(y\inv x)$.  Peforming the change of variables $z=y\inv x$, we obtain that the right hand side of \eqref{findcenter} is equal to \[\sum_{z\in d^{-1}d(x)\cap r^{-1}d(x)} g(xz\inv)f(z)= \sum_{z\in d^{-1}d(x)} g(xz\inv)f(z) = g\ast f(x)\] where the first equality uses that $f(z)=0$ if $d(z)\neq r(z)$.  Thus $f\in Z(K\mathscr G)$.

Conversely, suppose $f\in Z(K\mathscr G)$.  First we consider the case $x\in \mathscr G$ and $d(x)\neq r(x)$.  Choose a compact open set $U\subseteq \mathscr G^0$ so that $d(x)\in U$ and $r(x)\notin U$.  Then \[\chi_U\ast f(x) = \sum_{y\in d\inv d(x)} \chi_U(xy\inv)f(y) = 0\] since $xy\inv\in U$ forces it to be a unit, but then $y=x$ and $xx\inv =r(x)\notin U$.  On the other hand, \[f\ast \chi_U(x) = \sum_{y\in d\inv d(x)} f(xy\inv)\chi_U(y) = f(x)\] since $d(x)$ is the unique element of $d\inv d(x)$ in $U$.  Thus $f(x)=0$.

The remaining case is that  $d(x)=r(x)$ and we have $d(z)=d(x)$.  Then $zx\inv$ is defined.  Choose $U\in \mathscr G^a$ so that $zx\inv \in U$.  Then \[f\ast \chi_U(z) = \sum_{y\in d\inv d(z)}f(zy\inv)\chi_U(y) = f(zxz\inv)\] since $y\in U\cap d\inv d(z)$ implies $y=zx\inv$.  On the other hand, \[\chi_U\ast f(z) =   \sum_{y\in d\inv d(z)} \chi_U(zy\inv)f(y)=f(x)\] since $r(zy\inv) = r(zx\inv)$ and so $zy\inv \in U$ implies $zy\inv =zx\inv$, whence $y=x$. This shows that $f(x)=f(zxz\inv)$, completing the proof of the proposition.
\end{proof}

Our next proposition provides a sufficient condition for the characteristic functions of an inverse subsemigroup of $\mathscr G^a$ to span $K\mathscr G$.

\setcounter{Step}{0}

\begin{Prop}\label{smgroupgen}
Let $S\subseteq \mathscr G^a$ be an inverse subsemigroup such that:
\begin{enumerate}
 \item $E(S)$ generates the generalized boolean algebra $B(\mathscr G^0)$;
 \item $D=\{U\in \mathscr G^a\mid U\subseteq V\ \text{some}\ V\in S\}$ is a basis for the topology on $\mathscr G$.
\end{enumerate}
Then $K\mathscr G$ is spanned by the characteristic functions of elements of $S$.
\end{Prop}
\begin{proof}
Let $A$ be the span of the $\chi_V$ with $V\in S$.  Then $A$ is a $K$-subalgebra by Proposition~\ref{convolutionwelldefined}. We break the proof up into several steps.
\begin{Step}\label{step1}
 The collection $B$ of compact open subsets of $\mathscr G^0$ so that $\chi_U\in A$ is a generalized boolean algebra.
\end{Step}
 \begin{proof}
This is immediate from the formulas, for $U,V\in B(\mathscr G^0)$,
\begin{align*}
 \chi_U\ast \chi_V &= \chi_{UV} = \chi_{U\cap V}\\
\chi_{U\setminus V} & = \chi_U-\chi_{U\cap V}\\
\chi_{U\cup V} &= \chi_{U\setminus V}+\chi_{V\setminus U}+\chi_{U\cap V}.
\end{align*}
since $A$ is a subalgebra.
 \end{proof}
We may now conclude by (1) that $A$ contains $\chi_U$ for every element of $B(\mathscr G^0)$.

\begin{Step}\label{step2}
The characteristic function of each element of $D$ belongs to $A$.
\end{Step}
\begin{proof}
If $U\subseteq V$ with $V\in S$, then $VU\inv U=U$ and so $\chi_{U}=\chi_V\ast \chi_{U\inv U}\in A$ by Step~\ref{step1} since $U\inv U\in E(\mathscr G^a)=B(\mathscr G^0)$.
\end{proof}

\begin{Step}\label{step3}
Each $\chi_U$ with $U\in \mathscr G^a$ belongs to $A$.
\end{Step}
\begin{proof}
Since $D$ is a basis for $\mathscr G$ by hypothesis and $U$ is compact open, we may write $U=U_1\cup\cdots\cup U_n$ with the $U_i\in D$, for $i=1,\ldots, n$.  Since the $U_i$ are all contained in $U$, any finite intersection of the $U_i$ is clopen in $U$ and hence belongs to $\mathscr G^a$.  As $D$ is a downset, in fact, any finite intersection of the $U_i$ belongs to $D$.  Therefore, $\chi_U\in A$ by \eqref{charfuncs}.
\end{proof}

Proposition~\ref{characteristicbasis} now yields the result.
\end{proof}

\section{Actions of inverse semigroups and groupoids of germs}
As inverse semigroups are models of partial symmetry~\cite{Lawson}, it is natural to study them via their actions on spaces.  From such ``dynamical systems'' we can form a groupoid of germs and hence, in the ample setting, a $K$-algebra.  This $K$-algebra will be shown to behave like a cross product.

\subsection{The category of actions}
Let $X$ be a locally compact Hausdorff space and denote by $I_X$ the inverse semigroup of all homeomorphisms between open subsets of $X$.

\begin{Def}[Action]
An \emph{action} of an inverse semigroup $S$ on $X$ is a homomorphism $\p\colon S\to I_X$ written $s\mapsto \p_s$.  As usual, if $s\in S$ and $x\in \mathrm{dom}(\p_s)$, then we put $sx=\p_s(x)$.  Let us set $X_e$ to be the domain of $\p_e$ for  $e\in E(S)$.  The action is said to be \emph{non-degenerate} if $X=\bigcup_{e\in E(S)}X_e$.  If $\psi\colon S\to I_Y$ is another action, then a morphism from $\p$ to $\psi$ is a continuous map $\alpha\colon X\to Y$ so that, for all $x\in X$, one has that $sx$ is defined if and only if $s\alpha(x)$ is defined, in which case $\alpha(sx) = s\alpha(x)$.
\end{Def}

We will be most interested in what we term ``ample'' actions.  These actions will give rise to ample groupoids via the groupoid of germs constructions.

\begin{Def}[Ample action]
A non-degenerate action $\p\colon S\to I_X$ of an inverse semigroup $S$ on a space $X$ is said to be \emph{ample} if:
\begin{enumerate}
 \item $X$ is a locally compact boolean space;
\item $X_e\in B(X)$, for all $e\in E(S)$.
\end{enumerate}
If in addition, the collection $\{X_e\mid e\in E(S)\}$ generates $B(X)$ as a generalized boolean algebra, we say the action is \emph{boolean}.
\end{Def}

If $\mathscr G$ is an ample groupoid, there is a natural boolean action of $\mathscr G^a$ on $\mathscr G^0$.  Namely, if $U\in \mathscr G^a$, then the domain of its action is $U\inv U$ and the range is $UU\inv$.  If $x\in U\inv U$, then there is a unique element $g\in U$ with $d(g)=x$.  Define $Ux=r(g)$.  This is exactly the partial homeomorphism whose ``graph'' is $U$.  See~\cite{Exel,Paterson} for details.

\begin{Prop}\label{propermap}
Suppose that $S$ has ample actions on $X$ and $Y$ and let $\alpha\colon X\to Y$ be a morphism of the actions.  Then:
\begin{enumerate}
 \item For each $e\in E(S)$, one has $\alpha\inv(Y_e)=X_e$;
\item $\alpha$ is proper;
\item $\alpha$ is closed.
\end{enumerate}
\end{Prop}
\begin{proof}
From the definition of a morphism, $\alpha(x)\in Y_e$ if and only if $e\alpha(x)$ is defined, if and only if $ex$ is defined, if and only if $x\in X_e$.  This proves (1).  For (2), let $C\subseteq Y$ be compact.  Since the action on $Y$ is non-degenerate, we have that $C\subseteq \bigcup_{e\in E(S)} Y_e$ and so by compactness of $C$ it follows that $C\subseteq Y_{e_1}\cup\cdots\cup Y_{e_n}$ for some idempotents $e_1,\ldots,e_n$.  Thus $\alpha\inv (C)$ is a closed subspace of $\alpha\inv (Y_{e_1}\cup\cdots\cup Y_{e_n}) = X_{e_1}\cup\cdots \cup X_{e_n}$ and hence is compact since the $X_{e_i}$ are compact.  Finally, it is well known that a proper map between locally compact Hausdorff spaces is closed.
\end{proof}

For us the main example of a boolean action is the action of an inverse semigroup $S$ on the spectrum of its semilattice of idempotents.  For a semilattice $E$, we denote by $\wh E$ the space of non-zero semilattice homomorphisms $\p\colon E\to \{0,1\}$ topologized as a subspace of $\{0,1\}^E$. Such a homomorphism extends uniquely to a boolean ring homomorphism $\mathbb F_2E\to \mathbb F_2$ and hence $\wh E\cong \mathrm{Spec}(\mathbb F_2E)$, and is in particular a locally compact boolean space with $B(\wh E)\cong \mathbb F_2E$ (viewed as a generalized boolean algebra).  For $e\in E$, define $D(e) = \{\p\in \wh E\mid \p(e)=1\}$;  this is the compact open set corresponding to $e$ under the above isomorphism.  The semilattice of subsets of the form $D(e)$ generates $B(\wh E)$ as a generalized boolean algebra because $E$ generates $\mathbb F_2E$ as a boolean ring.  In fact, the map $e\mapsto D(e)$ is the universal semilattice homomorphism of $E$ into a generalized boolean algebra (corresponding to the universal property of the inclusion $E\to \mathbb F_2E$).  Elements of $\wh E$ are often referred to as \emph{characters}.

\begin{Def}[Spectral action]
Suppose that $S$ is an inverse semigroup with semilattice of idempotents $E(S)$.  To each $s\in S$, there is an associated homeomorphism $\beta_s\colon D(s^*s)\to D(ss^*)$ given by $\beta_s(\p)(e) = \p(s^*es)$.  The map $s\mapsto \beta_s$ provides a boolean action $\beta\colon S\to I_{\wh{E(S)}}$~\cite{Exel,Paterson}, which we call the \emph{spectral action}.
\end{Def}

The spectral action enjoys the following universal property.

\begin{Prop}\label{universal}
Let $\mathscr C$ be the category of boolean actions of $S$.
\begin{enumerate}
\item Suppose $S$ has a boolean action on $X$ and an ample action on $Y$ and let $\psi\colon X\to Y$ be a morphism.  Then $\psi$ is a topological embedding of $X$ as a closed subspace of $Y$.
\item Each homset of $\mathscr C$ contains at most one element.
\item The spectral action $\beta\colon S\to I_{\wh{E(S)}}$ is the terminal object in $\mathscr C$.
\end{enumerate}
\end{Prop}
\begin{proof}
By Proposition~\ref{propermap}, the map $\psi$ is proper  and $\psi\inv\colon B(Y)\to B(X)$ takes $Y_e$ to $X_e$ for $e\in E(S)$.   Since the $X_e$ with $e\in E(S)$ generate $B(X)$ as a generalized boolean algebra, it follows that $\psi\inv \colon B(Y)\to B(X)$ is surjective.  By Stone duality, we conclude $\psi$ is injective.  Also $\psi$ is closed being proper.  This establishes (1).

Again by Proposition~\ref{propermap} if $\psi\colon X\to Y$ is a morphism of boolean actions, then $\psi$ is proper and $\psi\inv\colon B(Y)\to B(X)$ sends $Y_e$ to $X_e$.  Since the $Y_e$ with $e\in E(S)$ generate $B(Y)$, it follows $\psi\inv$ is uniquely determined by the actions of $S$ on $X$ and $Y$.  But $\psi\inv$ determines $\psi$ by Stone duality, yielding (2).

Let $\alpha\colon S\to I_Y$ be a boolean action.  By definition the map $e\mapsto Y_e$ yields a homomorphism $E(S)\to B(Y)$ extending to a surjective homomorphism $\mathbb F_2E(S)\to B(Y)$.  Recalling $\mathbb F_2E(S)\cong B(\wh {E(S)})$, Stone duality yields a proper continuous injective map $\psi\colon Y\to \wh{E(S)}$ that sends $y\in Y$ to $\p_y\colon E(S)\to \{0,1\}$ given by $\p_y(e) = \chi_{Y_e}(y)$.  It remains to show that the map $y\mapsto \p_y$ is a morphism.  Let $y\in Y$ and $s\in S$.  Then $sy$ is defined if and only if $y\in Y_{s^*s}$, if and only if $\p_y(s^*s)=1$, if and only if $\p_y\in D(s^*s)$.  If $y\in Y_{s^*s}$, then $s\p_y(e) = \p_y(s^*es) = \chi_{Y_{s^*es}}(y)$.  But $Y_{s^*es}$ is the domain of $\alpha_{es}$.  Since $sy$ is defined, $y\in Y_{s^*es}$ if and only if $sy\in Y_e$.  Thus $\chi_{Y_{s^*es}}(y) = \chi_{Y_e}(sy) = \p_{sy}(e)$.  We conclude that $\p_{sy} = s\p_y$.  This completes the proof of the theorem.
\end{proof}

Since the restriction of a boolean action to a closed invariant subspace is evidentally boolean, we obtain the following description of boolean actions, which was proved by Paterson in a slightly different language~\cite{Paterson}.

\begin{Cor}
There is an equivalence between the category of boolean actions of $S$ and the poset of $S$-invariant closed subspaces of $\wh{E(S)}$.
\end{Cor}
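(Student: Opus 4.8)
The plan is to read the equivalence off directly from Proposition~\ref{universal}, with the spectral action $\beta$ serving as an ambient universal object. Let $\mathscr P$ denote the poset of $S$-invariant closed subspaces of $\wh{E(S)}$, ordered by inclusion and regarded as a (thin) category. I would define a functor $F\colon \mathscr C\to \mathscr P$ sending a boolean action $\alpha\colon S\to I_Y$ to the image $\psi_Y(Y)$ of its unique morphism $\psi_Y\colon Y\to \wh{E(S)}$ into the terminal object (Proposition~\ref{universal}(3)). By Proposition~\ref{universal}(1), $\psi_Y$ is a topological embedding onto a closed subspace, so $\psi_Y(Y)$ is closed; and it is $S$-invariant precisely because $\psi_Y$ is a morphism of actions: if $\eta=\psi_Y(y)\in \psi_Y(Y)\cap D(s^*s)$, then $s\eta$ is defined, hence so is $sy$, and $\beta_s(\eta)=s\psi_Y(y)=\psi_Y(sy)\in \psi_Y(Y)$.

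In the reverse direction I would define $G\colon \mathscr P\to \mathscr C$ by sending a closed invariant subspace $Z$ to the restriction $\beta|_Z$ of the spectral action, which is boolean by the remark preceding the corollary: closedness makes $Z$ a locally compact boolean space whose compact opens are the sets $Z\cap U$ with $U\in B(\wh{E(S)})$, and the $Z\cap D(e)$ generate $B(Z)$ because the $D(e)$ generate $B(\wh{E(S)})$ and $U\mapsto Z\cap U$ is a surjection of generalized boolean algebras by Stone duality. To see $GF\cong \mathrm{id}_{\mathscr C}$, note that $\psi_Y\colon Y\to \psi_Y(Y)$ is simultaneously a homeomorphism (Proposition~\ref{universal}(1)) and a morphism of actions, hence an isomorphism of boolean actions onto $G(F(Y))$. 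To see $FG=\mathrm{id}_{\mathscr P}$, observe that for a closed invariant $Z$ the inclusion $Z\hookrightarrow \wh{E(S)}$ is itself a morphism of boolean actions, so by the uniqueness clause Proposition~\ref{universal}(2) it must coincide with $\psi_Z$; thus $F(G(Z))=\psi_Z(Z)=Z$.

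On morphisms everything is then forced. Both categories are thin---the hom-sets of $\mathscr C$ are empty or singletons by Proposition~\ref{universal}(2), and $\mathscr P$ is a poset---so it suffices to check that $F$ and $G$ preserve order. If there is a (necessarily unique) morphism $\iota\colon X\to Y$ in $\mathscr C$, then by uniqueness $\psi_X=\psi_Y\circ \iota$, whence $F(X)=\psi_X(X)\subseteq \psi_Y(Y)=F(Y)$; conversely, an inclusion $Z\subseteq Z'$ of invariant closed subspaces is visibly a morphism $\beta|_Z\to \beta|_{Z'}$, giving order-preservation of $G$. Combining these, $F$ and $G$ are mutually inverse equivalences.

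I do not anticipate a genuine obstacle: all of the substance has already been absorbed into Proposition~\ref{universal}, and the only points needing a line of verification are the $S$-invariance of the images $\psi_Y(Y)$ and the fact that the restricted spectral action on a closed invariant subspace is again boolean, both of which are routine consequences of the morphism axioms together with Stone duality. If anything, the delicate point is merely bookkeeping---remembering that $\mathscr C$ is only thin rather than skeletal, so that $GF$ is naturally isomorphic to, rather than equal to, the identity---but this is harmless for the statement of an equivalence.
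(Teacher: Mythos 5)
Your proposal is correct and takes essentially the same route as the paper: the paper obtains the corollary as an immediate consequence of Proposition~\ref{universal} together with the single observation that the restriction of a boolean action (in particular, of the spectral action) to a closed invariant subspace is again boolean, which is precisely the content of your functors $F$ and $G$. You have merely written out the routine verifications (closedness and invariance of the image, surjectivity of $U\mapsto Z\cap U$ via Stone duality, thinness of both categories) that the paper leaves implicit.
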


%

\subsubsection{Filters}
Recall that a \emph{filter} $\mathscr F$ on a semilattice $E$ is a non-empty subset that is closed under pairwise meets and is an upset in the ordering.  For example, if $\p\colon E\to \{0,1\}$ is a character, then $\p\inv(1)$ is a filter.  Conversely, if $\mathscr F$ is a filter, then its characteristic function $\chi_{\mathscr F}$ is a non-zero homomorphism.  A filter is called \emph{principal} if it has a minimum element, i.e., is of the form $e^{\uparrow}$.  A character of the form $\chi_{e^{\uparrow}}$, with $e\in E$, is called a \emph{principal character}. Notice that every filter on a finite semilattice $E$ is principal and in this case $\wh E$ is homeomorphic to $E$ with the discrete topology.

In general, the set of principal characters is dense in $\wh{E}$ since $\chi_{e^{\uparrow}}\in D(e)$ and the generalized boolean algebra generated by the open subsets of the form $D(e)$ is a basis for the topology on $\wh{E}$.  Thus if $\wh{E}$ is discrete, then necessarily each filter on $E$ is principal and $\wh{E}$ is in bijection with $E$.  However, the converse is false, as we shall see in a moment.   The following, assumedly well-known,  proposition captures when every filter is principal, when the topology on $\wh E$ is discrete and when the principal characters are discrete in $\wh E$.

\begin{Prop}\label{descendingchain}
Let $E$ be a semilattice. Then:
\begin{enumerate}
\item Each filter on $E$ is principal if and only if $E$ satisfies the descending chain condition;
\item The topology on $\wh E$ is discrete if and only if each principal downset of $E$ is finite;
\item The set of principal characters is discrete in $\wh E$ if and only if, for all $e\in E$, the downset $e^\downharpoonleft = \{f\in E\mid f<e\}$ is finitely generated.
\end{enumerate}
\end{Prop}
\begin{proof}
Suppose first $E$ satisfies the descending chain condition and let $\mathscr F$ be a filter.  Then each element $e\in \mathscr F$ is above a minimal element of $\mathscr F$, else we could construct an infinite strictly descending chain.  But if $e,f\in \mathscr F$ are minimal, then $ef\in \mathscr F$ implies that $e=ef=f$.  Thus $\mathscr F$ is principal.  Conversely, suppose each filter in $E$ is principal and that $e_1\geq e_2\geq \cdots$ is a descending chain.  Let $\mathscr F = \bigcup_{i=1}^{\infty} e_i^{\uparrow}$.  Then $\mathscr F$ is a filter.  By assumption, we have $\mathscr F=e^{\uparrow}$ for some $e\in E$.  Because $e\in \mathscr F$, we must have $e\geq e_i$ for some $i$.  On the other hand $e_j\geq e$ for all $j$ since $\mathscr F=e^{\uparrow}$.  If follows that $e=e_i=e_{i+1}=\cdots$ and so $E$ satisfies the descending chain condition.

To prove (2), suppose first that $\wh E$ is discrete.  Then since $D(e)$ is compact open, it must be finite.  Moreover, each filter on $E$ is principal and  so $D(e) = \{\chi_{f^{\uparrow}}\mid f\leq e\}$.  It follows that $e^{\downarrow}$ is finite.  Conversely, if each principal downset is finite, then every filter on $E$ is principal by (1). Suppose $e^{\downarrow}\setminus \{e\} = \{f_1,\ldots,f_n\}$. Then $\{\chi_{e^{\uparrow}}\} = D(e)\setminus (D(f_1)\cup\cdots\cup D(f_n))$ is open.  Thus $\wh E$ is discrete.

For (3), suppose first $e^\downharpoonleft$ is generated by $f_1,\ldots,f_n$.  Then $\chi_{e^{\uparrow}}$ is the only principal character in the open set $D(e)\setminus (D(f_1)\cup \cdots \cup D(f_n))$.  This establishes sufficiency of the given condition for discreteness.  Conversely, suppose that the principal characters form a discrete set.  Then there is a basic neighborhood of $\chi_{e^{\uparrow}}$ of the form $U=D(e')\setminus (D(f_1)\cup \cdots \cup D(f_n))$ for certain $e',f_1,\ldots, f_n\in E$ (cf.~\cite{Paterson}) containing no other principal character.  Then $e\leq e'$ and $e\nleq f_i$, for $i=1,\ldots, n$. In particular, $ef_1,\ldots,ef_n\in  e^\downharpoonleft$.  We claim they generate it.   Indeed, if $f<e\leq e'$, then since $\chi_{f^\uparrow}\notin U$, we must have $f\leq f_i$ for some $i=1,\ldots, n$ and hence $f=ef\leq ef_i$, for some $i=1,\ldots n$.  This completes the proof.
\end{proof}

For instance, consider the semilattice $E$ with underlying set $\mathbb N\cup \{\infty\}$ and with order given by $0<i<\infty$ for all $i\geq 1$ and all other elements are incomparable.  Then $E$ satisfies the descending chain condition but $\infty^{\downarrow}$ is infinite.   If we identify $\wh E$ with $E$ as sets, then the topology is that of $\infty$ being a one-point compactification of the discrete space $\mathbb N$.  The condition in (3) is called \emph{pseudofiniteness} in~\cite{Munnsemiprimitive}.


\begin{Def}[Ultrafilter]
A filter $\mathscr F$ on a semilattice $E$ is called an \emph{ultrafilter} if it is a maximal proper filter.
\end{Def}

Recall that an idempotent $e$ of an inverse semigroup is called \emph{primitive} if it is minimal amongst its non-zero idempotents.   The connection between ultrafilters and morphisms of generalized boolean algebras is well known~\cite{halmosnew}.

\begin{Prop}\label{ultrafilterchar}
Let $E$ be a semilattice with zero.
\begin{enumerate}
\item A principal filter $e^{\uparrow}$ is an ultrafilter if and only if $e$ is primitive.
\item Moreover, if $E$ is a generalized boolean algebra, then a filter $\mathscr F$ on $E$ is an ultrafilter if and only if $\chi_{\mathscr F}\colon E\to \{0,1\}$ is a morphism of generalized boolean algebras.
\end{enumerate}
\end{Prop}
\begin{proof}
Evidentally, if $e$ is not a minimal non-zero idempotent, then $e^{\uparrow}$ is not an ultrafilter since it is contained in some proper principal filter.  Suppose that $e$ is primitive and $f\notin e^{\uparrow}$.  Then $ef< e$ and so $ef=0$.  Thus no proper filter contains $e^{\uparrow}$. This proves (1).

To prove (2), suppose first that $\mathscr F$ is an ultrafilter.  We must verify that $e_1\vee e_2\in \mathscr F$ implies $e_i\in \mathscr F$ for some $i=1,2$.  Suppose neither belong to $\mathscr F$.  For $i=1,2$, put $\mathscr F_i = \{e\in E\mid \exists f\in \mathscr F\ \text{such that}\ e\geq e_if\}$.  Then $\mathscr F_i$, for $i=1,2$, are filters properly containing $\mathscr F$.  Thus $0\in \mathscr F_1\cap\mathscr F_2$ and so we can find $f_1,f_2\in F$ with $e_1f_1=0=e_2f_2$.  Then $f_1f_2\in \mathscr F$ and $0=e_1f_1f_2\vee e_2f_1f_2=(e_1\vee e_2)f_1f_2\in \mathscr F$, a contradiction.  Thus $e_i\in \mathscr F_i$ some $i=1,2$.

Conversely, suppose that $\chi_{\mathscr F}$ is a morphism of generalized boolean algebras.  Then $0\notin \mathscr F$ and so $\mathscr F$ is a proper filter.  Suppose that $\mathscr F'\supsetneq \mathscr F$ is a filter.  Let $e\in \mathscr F'\setminus \mathscr F$ and let $f\in \mathscr F$. We cannot have $fe\in \mathscr F$ as $f\notin \mathscr F$.  Because $fe\vee (f\setminus e)=f\in \mathscr F$ and $\chi_{\mathscr F}$ is a morphism of generalized boolean algebras, it follows that $f\setminus e\in \mathscr F\subseteq \mathscr F'$.  Thus $0=e(f\setminus e)\in \mathscr F'$.  We conclude $\mathscr F$ is an ultrafilter.
\end{proof}

It follows from the proposition that if $E$ is a generalized boolean algebra, then the points of $\mathrm{Spec}(E)$ can be identified with ultrafilters on $E$.  If $X$ is a locally compact boolean space and $x\in X$, then the corresponding ultrafilter on $B(X)$ is the set of all compact open neighborhoods of $x$.  It is not hard to see that $\mathrm{Spec}(E)$ is a closed subspace of $\wh E$ for a generalized boolean algebra $E$.  For semilattices in general, the space of ultrafilters is not closed in $\wh E$, which led Exel to consider the closure of the space of ultrafilters, which he terms the space of tight filters~\cite{Exel}.

%

\subsection{Groupoids of germs}
There is a well-known construction assigning to each non-degenerate action $\p\colon S\to I_X$ of an inverse semigroup $S$ on a locally compact Hausdorff space $X$ an \'etale groupoid, which we denote $S\ltimes_\p X$, known as the \emph{groupoid of germs} of the action~\cite{Paterson,Exel,resendeetale}.  Usually, $\p$ is dropped from the notation if it is understood.  The groupoid of germs construction is functorial.  It goes as follows.  As a set $S\ltimes_\p X$ is the quotient of the set $\{(s,x)\in S\times X\mid x\in X_{s^*s}\}$ by the equivalence relation that identifies $(s,x)$ and $(t,y)$ if and only if $x=y$ and there exists $u\leq s,t$ with $x\in X_{u^*u}$   One writes $[s,x]$ for the equivalence class of $(s,x)$ and calls it the \emph{germ of $s$ at $x$}.  The associated topology is the so-called germ topology.  A basis consists of all sets of the form $(s,U)$ where $U\subseteq X_{s^*s}$ and $(s,U) = \{[s,x]\mid x\in U\}$.  The multiplication is given by defining $[s,x]\cdot [t,y]$ if and only if $ty=x$, in which case the product is $[st,y]$.  The units are the elements $[e,x]$ with $e\in E(S)$ and $x\in X_e$.  The projection $[e,x]\mapsto x$ gives a homeomorphism of the unit space of $S\ltimes X$ with $X$, and so from now on we identify the unit space with $X$.  One then has $d([s,x]) =x$ and $r([s,x])=sx$.  The inversion is given by $[s,x]\inv = [s^*,sx]$.  The groupoid $S\ltimes X$ is an \'etale groupoid~\cite[Proposition 4.17]{Exel}.  The reader should consult~\cite{Exel,Paterson} for details.

Observe that if $\mathscr B$ is a basis for the topology of $X$, then a basis for $S\ltimes X$ consists of those sets of the form $(s,U)$ with $U\in \mathscr B$, as is easily verified.  Let us turn to some enlightening examples.

\begin{Example}[Transformation groupoids]
In the case that $S$ is a discrete group, the equivalence relation on $S\times X$ is trivial and the topology is the product topology.  The resulting \'etale groupoid is consequently Hausdorff and is known in the literature as the \emph{transformation groupoid} associated to the transformation group $(S,X)$~\cite{Renault,Paterson}.
\end{Example}

\begin{Example}[Maximal group image]
 An easy example is the case when $X$ is a one-point set on which $S$ acts trivially.  It is then straightforward to see that $S\ltimes X$ is the maximal group image of $S$.  Indeed, elements of the groupoid are equivalence classes of elements of $S$ where two elements are considered equivalent if they have a common lower bound.
\end{Example}

\begin{Example}[Underlying groupoid]
Another example is the case $X=E(S)$ with the discrete topology.  The action is the so-called Munn representation $\mu\colon S\to I_{E(S)}$ given by putting $X_e = e^{\downarrow}$ and defining \mbox{$\mu_s\colon X_{s^*s}\to X_{ss^*}$} by $\mu_s(e) = ses^*$~\cite{Lawson}.  The spectral action is the dual of the (right) Munn representation.  We observe that each equivalence class $[s,e]$ contains a unique element of the form $(t,e)$ with $t^*t=e$, namely $t=se$.  Then $e$ is determined by $t$.  Thus arrows of $S\ltimes X$ are in bijection with elements of $S$ via the map $s\mapsto [s,s^*s]$.  One has $d(s)=s^*s$, $r(s)=ss^*$ and if $s,t$ are composable, their composition is $st$.  The inverse of $s$ is $s^*$.  This is the so-called \emph{underlying groupoid} of $S$~\cite{Lawson}.  The slice $(s,\{s^*s\})$ contains just the arrow $s$, so the topology is discrete.
\end{Example}

The next proposition establishes the functoriality of the germ groupoid.

\begin{Prop}\label{functoriality}
Let $S$ act on $X$ and $Y$ and suppose $\psi\colon X\to Y$ is a morphism.  Then there is a continuous functor $\Psi\colon S\ltimes X\to S\ltimes Y$ given by $[s,x]\mapsto [s,\psi(x)]$.  If the actions are boolean, then $\Psi$ is an embedding of groupoids and the image consists precisely of those arrows of $S\ltimes Y$ between elements of $\psi(X)$.
\end{Prop}
\begin{proof}
We verify that $\Psi$ is well defined.   First note that $x\in X_{s^*s}$ if and only if $\psi(x)\in Y_{s^*s}$ for any $s\in S$ by the definition of a morphism.  Suppose that $(s,x)$ is equivalent to $(t,x)$.  Then we can find $u\leq s,t$ with $x\in X_{u^*u}$.  It then follows that $\psi(x)\in Y_{u^*u}$ and so $(s,\psi(x))$ is equivalent to $(t,\psi(x))$.  Thus $\Psi$ is well defined. The details that $\Psi$ is a continuous functor are routine and left to the reader.  In the case the actions are boolean, the fact that $\Psi$ is an embedding follows easily from Proposition~\ref{universal}.  The description of the image follows because if $[s,y]$ satisfies $y, sy\in \psi(X)$ and $y=\psi(x)$, then $[s,y]=\Psi([s,x])$.
\end{proof}

In~\cite{Exel,Paterson}, the term \emph{reduction} is used to describe a groupoid obtained by restricting the units to a closed subspace and taking the full subgroupoid (in the sense of~\cite{Mac-CWM}) of all arrows between these objects.

The following is~\cite[Proposition 4,18]{Exel}.

\begin{Prop}
The basic open set $(s,U)$ with $U\subseteq X_{s^*s}$ is a slice of $S\ltimes X$ homeomorphic to $U$.
\end{Prop}

From the proposition, it easily follows that if $\p\colon S\to I_X$ is an ample action and $\mathscr G=S\ltimes_{\p} X$, then each open set of the form $(s,U)$ with $U\in B(X)$ belongs to $\mathscr G^a$ and the collection of such open sets is a basis for the topology on $\mathscr G$.  Thus $S\ltimes_{\p} X$ is an ample groupoid. Moreover, the map $s\mapsto (s,X_{s^*s})$ in this setting is a homomorphism $\theta\colon S\to \mathscr G^a$, as the following lemma shows in the general case.

\begin{Lemma}\label{slicehom}
Let $S$ have a non-degenerate action on $X$. If $(s,U)$ and $(t,V)$ are basic neighborhoods of $\mathscr G=S\ltimes X$, then \[(s,U)(t,V) = (st, t^*(U\cap tV))\] and $(s,U)\inv = (s^*,sU)$.  Consequently, the map $s\mapsto (s,X_{s^*s})$ is a homomorphism from $S$ to $\mathscr G^{op}$ and furthermore if the action is ample, then it is a homomorphism to $\mathscr G^a$.
\end{Lemma}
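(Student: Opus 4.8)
The plan is to verify the two displayed formulas by direct computation from the definition of the germ groupoid, using the Proposition on slices to the effect that multiplication and inversion in $\mathscr G^{op}$ are performed setwise; the homomorphism statement then follows by specializing $U$ and $V$.

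First I would compute the setwise product $(s,U)(t,V)$. A typical element of $(s,U)$ is a germ $[s,x]$ with $x\in U$, and of $(t,V)$ a germ $[t,y]$ with $y\in V$. By definition of the groupoid multiplication, $[s,x]\cdot [t,y]$ is defined precisely when $x=ty$, in which case it equals $[st,y]$. Thus the product slice consists exactly of the germs $[st,y]$ with $y\in V$ and $ty\in U$. Setting $w=ty$, the condition ``$y\in V$ and $ty\in U$'' is equivalent to ``$w\in U\cap tV$ with $y=t^*w$'': here one uses that $y\in X_{t^*t}$ gives $t^*ty=y$, so $y=t^*(ty)$, and conversely that $w\in tV\subseteq X_{tt^*}$ gives $tt^*w=w$. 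Hence the admissible second coordinates are exactly $t^*(U\cap tV)$, and $(s,U)(t,V)=(st,\,t^*(U\cap tV))$. A short check confirms $t^*(U\cap tV)\subseteq X_{(st)^*st}$, so the right-hand side is a legitimate basic open set (and a slice). The inversion formula is even more immediate: since $[s,x]\inv=[s^*,sx]$, as $x$ runs over $U$ the germ $[s^*,sx]$ runs over $(s^*,sU)$, using $x=s^*(sx)$; this gives $(s,U)\inv=(s^*,sU)$.

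For the homomorphism $\theta(s)=(s,X_{s^*s})$, I would specialize the product formula to $U=X_{s^*s}$ and $V=X_{t^*t}$. Since $tX_{t^*t}=X_{tt^*}$ and, for idempotents, $X_e\cap X_f=X_{ef}$, we get $X_{s^*s}\cap tX_{t^*t}=X_{s^*s\,tt^*}$. Applying $t^*$ and using that $\varphi_{t^*}$ carries $X_g$ (for $g\le tt^*$) onto $X_{t^*gt}$ yields $t^*(X_{s^*s\,tt^*})=X_{t^*s^*stt^*t}=X_{(st)^*st}$. Therefore $\theta(s)\theta(t)=(st,X_{(st)^*st})=\theta(st)$, and one checks similarly from the inversion formula that $\theta(s)\inv=(s^*,X_{ss^*})=\theta(s^*)$, so $\theta$ is a homomorphism into $\mathscr G^{op}$. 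Finally, when the action is ample each $X_{s^*s}\in B(X)$, whence $(s,X_{s^*s})\in\mathscr G^a$ by the remark preceding the lemma, so $\theta$ lands in $\mathscr G^a$.

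I expect the main obstacle to be the careful bookkeeping with the partial homeomorphisms: correctly tracking the domains of composites and verifying the set-theoretic identity $\{y\in V\mid ty\in U\}=t^*(U\cap tV)$ together with the containment $t^*(U\cap tV)\subseteq X_{(st)^*st}$, where the idempotent arithmetic in $S$ must be matched against intersections and images of the sets $X_e$.
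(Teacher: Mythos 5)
Your proposal is correct and takes essentially the same route as the paper's proof: both compute the setwise product of germs directly from the multiplication rule, establish the set identity $\{y\in V\mid ty=x\in U\}=t^*(U\cap tV)$ via the correspondence $x=ty$, $y=t^*x$, verify the containment $t^*(U\cap tV)\subseteq X_{(st)^*st}$ so the right-hand side is a legitimate basic slice, and obtain the homomorphism statement by specializing $U=X_{s^*s}$, $V=X_{t^*t}$ with the same idempotent arithmetic $t^*(X_{s^*s}\cap tX_{t^*t})=X_{(st)^*(st)}$.
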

\begin{proof}
First observe that $t^*(X_{s^*s}\cap tX_{t^*t}) = t^*(X_{s^*s}\cap X_{tt^*}) = t^*(X_{s^*stt^*}) = t^*s^*stt^*(X_{s^*stt^*}) = t^*s^*s(X_{s^*stt^*}) = X_{t^*s^*st} = X_{(st)^*(st)}$.  The final statement now follows from the first.

By definition $U\subseteq X_{s^*s}$ and $V\subseteq X_{t^*t}$. Therefore, we have $t^*(U\cap tV)\subseteq  t^*(X_{s^*s}\cap tX_{t^*t})= X_{(st)^*st}$ and so $(st, t^*(U\cap tV))$ is well defined.   Suppose $x\in U$ and $y\in V$ with $ty=x$.  Then $x\in U\cap tV$ and so $y\in t^*(U\cap tV)$.  Moreover, $[s,x]\cdot [t,y] = [st,y]\in (st,t^*(U\cap tV))$.  This shows $(s,U)(t,V)\subseteq (st,t^*(U\cap tV))$.  For the converse, assume $[st,y]\in (st,t^*(U\cap tV))$.  Then $y\in t^*tV=V$ and if we set $x=ty$, then $x\in tt^*U\subseteq U$.    We conclude $[st,y]=[s,x]\cdot[t,y]\in (s,U)(t,V)$.  The equality $(s,U)\inv = (s^*,sU)$ is trivial.
\end{proof}

Notice that the action of the slice $(s,X_{s^*s})$ on $\mathscr G^0=X$ is exactly the map $x\mapsto sx$.  Indeed, the domain of the action of $(s,X_{s^*s})$ is $X_{s^*s}$ and if $x\in X_{s^*s}$, then $[s,x]$ is the unique element of the slice with domain $x$.  But $r([s,x]) = sx$.

In the case of the trivial action on a one-point space, the map from Lemma~\ref{slicehom} is the maximal group image homomorphism.  In the case of the Munn representation, the map is $s\mapsto \{t\in S\mid t\leq s\}=s^{\downarrow}$.  It is straightforward to verify that in this case the homomorphism is injective.  The reader should compare with~\cite{mobius1,mobius2}.

Summarizing, we have the following proposition.

\begin{Prop}\label{amplestuff}
 Let $\p\colon S\to I_X$ be an ample action and put $\mathscr G=S\ltimes_{\p} X$.  Then:
\begin{enumerate}
 \item $\mathscr G$ is an ample groupoid;
\item There is a homomorphism $\theta\colon S\to \mathscr G^a$ given by $\theta(s) = (s,X_{s^*s})$;
\item $\bigcup \theta(S) = \mathscr G$;
\item $\{U\in \mathscr G^a\mid U\subseteq \theta(s)\ \text{some}\ s\in S\}$ is a basis for the topology on $\mathscr G$;
\item There is a homomorphism $\Theta\colon S\to K\mathscr G$ given by \[\Theta(s) = \chi_{\theta(s)} = \chi_{(s,X_{s^*s})},\] which is a $\ast$-homomorphism when $K=\mathbb C$.
\end{enumerate}
Moreover, if $\p$ is a boolean action, then:
\begin{enumerate}
\item [(6)] $E(\theta(S))$ generates $B(\mathscr G^0)\cong B(X)$ as a generalized boolean algebra;
\item [(7)] $\Theta(S)$ spans $K\mathscr G$.
\end{enumerate}
\end{Prop}
\begin{proof}
Item (4) is a consequence of the fact that if $U\subseteq X_{s^*s}$, then the basic set $(s,U)$ is contained in $\theta(s)=(s,X_{s^*s})$.
Item (5) follows from Proposition~\ref{convolutionwelldefined} and Lemma~\ref{slicehom}.
Item (7) is a consequence of (4), (6) and Proposition~\ref{smgroupgen}.  The remaining statements are clear.
\end{proof}

The universal groupoid of an inverse semigroup was introduced by Paterson~\cite{Paterson} and has since been studied by several authors~\cite{Exel,lenz,resendeetale,strongmorita}.

\begin{Def}[Universal groupoid]
Let $S$ be an inverse semigroup.  The groupoid of germs $\mathscr G(S)=S\ltimes_\beta \wh{E(S)}$ is called the \emph{universal groupoid} of $S$~\cite{Paterson,Exel}.  It is an ample groupoid. Notice that if $E(S)$ is finite (or more generally if each principal downset of $E(S)$ is finite), then the underlying groupoid of $S$ is the universal groupoid.
\end{Def}

Propositions~\ref{universal} and~\ref{functoriality} immediately imply the following universal property of $\mathscr G(S)$, due to Paterson~\cite{Paterson}.

\begin{Prop}
Let $\p\colon S\to I_X$ be a boolean action.  Then there is a unique continuous functor $\Phi\colon S\ltimes X\to \mathscr G(S)$ so that $\Phi((s,X_{s^*s})) = (s,D(s^*s))$.  Moreover, $\Phi$ is an embedding of topological groupoids with image a reduction of $\mathscr G(S)$ to a closed $S$-invariant subspace of $\wh{E(S)}$.
\end{Prop}

Examples of universal groupoids of inverse semigroups can be found in~\cite[Chapter 4]{Paterson}.
It is convenient at times to use the following algebraic embedding of the underlying groupoid into the universal groupoid.

\begin{Lemma}\label{embedunderlying}
Let $s\in S$.  Then $[s,\chi_{(s^*s)^{\uparrow}}]=[t,\chi_{(s^*s)^{\uparrow}}]$ if and only if $s\leq t$.  Consequently, $[s,\chi_{(s^*s)^{\uparrow}}]\in (t,D(t^*t))$ if and only if $s\leq t$.   Moreover, the map $s\mapsto [s,\chi_{(s^*s)^{\uparrow}}]$ is an injective functor from the underlying groupoid of $S$ onto a dense subgroupoid of $\mathscr G(S)$.
\end{Lemma}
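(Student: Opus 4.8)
The plan is to prove everything by first pinning down the biconditional governing germ equality, since the remaining assertions reduce to it. Write $\p=\chi_{(s^*s)^{\uparrow}}$ and note $\p\in D(s^*s)$ automatically, as $s^*s\geq s^*s$. For the forward direction, if $s\leq t$ then $s^*s\leq t^*t$ (the natural partial order is compatible with products), so $\p\in D(t^*t)$ and the element $u=s$ satisfies $u\leq s,t$ with $\p\in D(u^*u)$; by the defining relation of the groupoid of germs this gives $[s,\p]=[t,\p]$. For the converse, germ equality supplies $u\leq s,t$ with $\p(u^*u)=1$, i.e. $u^*u\geq s^*s$, whence $u^*u\cdot s^*s=s^*s$ since idempotents commute. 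Using $u=su^*u$ (from $u\leq s$) and $u=tu^*u$ (from $u\leq t$) and multiplying on the right by $s^*s$ yields $s=us^*s=ts^*s$, that is $s\leq t$. I expect this converse to be the main obstacle: extracting $s\leq t$ from the mere existence of a common lower bound together with the domain condition $u^*u\geq s^*s$ is where the idempotent manipulation is delicate, and once it is in hand the rest follows quickly.

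The ``consequently'' clause is then immediate: if $[s,\p]\in (t,D(t^*t))$, then $[s,\p]=[t,y]$ for some $y\in D(t^*t)$, and applying the domain map $d$ forces $y=\p$; so membership is equivalent to $[s,\p]=[t,\p]$, hence to $s\leq t$ by the biconditional just proved.

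For the functor $s\mapsto [s,\chi_{(s^*s)^{\uparrow}}]$, I would first record the spectral computation $\beta_t(\chi_{(t^*t)^{\uparrow}})=\chi_{(tt^*)^{\uparrow}}$: indeed $\beta_t(\chi_{(t^*t)^{\uparrow}})(e)=\chi_{(t^*t)^{\uparrow}}(t^*et)=1$ iff $t^*et\geq t^*t$, and conjugating by $t$ and $t^*$ shows this is equivalent to $e\geq tt^*$. Now if $s,t$ are composable in the underlying groupoid, so $s^*s=tt^*$, then the range of $[t,\chi_{(t^*t)^{\uparrow}}]$ is $\chi_{(tt^*)^{\uparrow}}=\chi_{(s^*s)^{\uparrow}}$, matching the domain of $[s,\chi_{(s^*s)^{\uparrow}}]$; the germs therefore compose to $[st,\chi_{(t^*t)^{\uparrow}}]$, and since $(st)^*(st)=t^*s^*st=t^*(tt^*)t=t^*t$, this is precisely the image of $st$. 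Identities map to units, and $[s,\chi_{(s^*s)^{\uparrow}}]\inv=[s^*,\beta_s(\chi_{(s^*s)^{\uparrow}})]=[s^*,\chi_{(ss^*)^{\uparrow}}]$ is the image of $s^*$, so the map is a functor and its image is a subgroupoid. Injectivity follows from the biconditional: if two elements share an image, $d$ forces their domain idempotents to agree, say $s^*s=t^*t$, and then $s\leq t$ gives $s=ts^*s=tt^*t=t$.

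Finally, for density I would invoke the earlier fact that the principal characters are dense in $\wh{E(S)}$. A nonempty basic open set of $\mathscr G(S)$ has the form $(s,U)$ with $U$ a nonempty open subset of $D(s^*s)$, so $U$ contains a principal character $\chi_{e^{\uparrow}}$, and $\chi_{e^{\uparrow}}\in D(s^*s)$ forces $e\leq s^*s$. Setting $u=se$ gives $u\leq s$ and $u^*u=e(s^*s)e=e$, so by the biconditional $[s,\chi_{e^{\uparrow}}]=[u,\chi_{(u^*u)^{\uparrow}}]$ lies in the image of the functor while also lying in $(s,U)$. Hence the image meets every nonempty basic open set and is dense, completing the argument.
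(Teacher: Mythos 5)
Your proof is correct, and for the parts the paper actually proves it follows the same route: the converse of the biconditional is the same idempotent manipulation (the paper computes $u=su^*u=ss^*su^*u=ss^*s=s$ and concludes $s=u\leq t$, while you multiply $u=su^*u$ and $u=tu^*u$ on the right by $s^*s$ to get $s=ts^*s$ --- the identical idea, rearranged), and the ``consequently'' clause is handled exactly as in the paper by applying $d$ to force the base points to agree. Where you genuinely go beyond the paper is the final statement: the paper dismisses functoriality, injectivity and density as ``straightforward'' and cites Paterson's Proposition~4.4.6, whereas you prove them in full --- the computation $\beta_t(\chi_{(t^*t)^{\uparrow}})=\chi_{(tt^*)^{\uparrow}}$ to match domains and ranges, the identity $(st)^*(st)=t^*t$ when $s^*s=tt^*$ to verify composition, injectivity via $s=ts^*s=tt^*t=t$, and density by locating, inside any nonempty basic set $(s,U)$, a principal character $\chi_{e^{\uparrow}}$ with $e\leq s^*s$ and recognizing $[s,\chi_{e^{\uparrow}}]=[se,\chi_{e^{\uparrow}}]$ as the image of $se$. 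This makes your argument self-contained where the paper relies on an external reference, at the modest cost of length; all the individual steps check out (in particular the density argument correctly uses the paper's earlier observation that principal characters are dense in $\wh{E(S)}$).
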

\begin{proof}
We verify the first two statements.  The final statement is straightforward from the previous ones and can be found in~\cite[Proposition~4.4.6]{Paterson}.   If $s\leq t$, the germs of $s$ and $t$ at $\chi_{(s^*s)^{\uparrow}}$ clearly coincide.  Assume conversely, that the germs are the same. By definition there exists $u\leq s,t$ so that $\chi_{(s^*s)^{\uparrow}}\in D(u^*u)$, i.e., $u^*u\geq s^*s$.  But then $u=su^*u= ss^*su^*u=ss^*s=s$.  Thus $s\leq t$.  The second statement follows since $[s,\chi_{(s^*s)^{\uparrow}}]\in (t,D(t^*t))$ if and only if $[s,\chi_{(s^*s)^{\uparrow}}]=[t,\chi_{(s^*s)^{\uparrow}}]$.
\end{proof}

We end this section with a sufficient condition for the groupoid of germs of an action to be Hausdorff, improving on~\cite[Proposition 6.2]{Exel} (see also~\cite{Paterson,lenz}).  For the universal groupoid, the condition is shown also to be necessary and is the converse to~\cite[Corollary 4.3.1]{Paterson}.  As a consequence we obtain a much easier proof that the universal groupoid of a certain commutative inverse semigroup considered in~\cite[Appendix C]{Paterson} is not Hausdorff.

Observe that a poset $P$ is a semilattice if and only if the intersection of principal downsets is again a principal downset.  We say that a poset is a \emph{weak semilattice} if the intersection of principal downsets is finitely generated as a downset. The empty downset is considered to be generated by the empty set.

\begin{Thm}\label{Hausdorff}
Let $S$ be an inverse semigroup.  Then the following are equivalent:
\begin{enumerate}
\item $S$ is a weak semilattice;
\item The groupoid of germs of any non-degenerate action \mbox{$\theta\colon S\to I_X$} such that $X_e$ is clopen for all $e\in E(S)$ is Hausdorff;
\item $\mathscr G(S)$ is Hausdorff.
\end{enumerate}
In particular, every groupoid of germs for an ample action of $S$ is Hausdorff if and only if $S$ is a weak semilattice.
\end{Thm}
\begin{proof}
First we show that (1) implies (2).
Suppose $[s,x]\neq [t,y]$ are elements of $\mathscr G$.  If $x\neq y$, then choose disjoint neighborhoods $U,V$ of $x$ and $y$ in $X$, respectively.  Clearly, $(s,U\cap X_{s^*s})$ and $(t,V\cap X_{t^*t})$ are disjoint neighborhoods of $[s,x]$ and $[t,y]$, respectively.

Next assume $x=y$.  If $s^{\downarrow}\cap t^{\downarrow}=\emptyset$, then $(s,X_{s^*s})$ and $(t,X_{t^*t})$ are disjoint neighborhoods of $[s,x]$ and $[t,x]$, respectively, since if $[s,z]=[t,z]$ then there exists $u\leq s,t$.  So we are left with the case $s^{\downarrow}\cap t^{\downarrow}\neq \emptyset$.  Since $S$ is a weak semilattice, we can find elements $u_1,\ldots, u_n\in S$ so that $u\leq s,t$ if and only if $u\leq u_i$ for some $i=1,\ldots, n$.  Let $V=X\setminus \left(\bigcup_{i=1}^n X_{u_i^*u_i}\right)$; it is an open set by hypothesis.   If $x\in X_{u_i^*u_i}$ for some $i$, then since $u_i\leq s,t$, it follows $[s,x]=[t,x]$, a contradiction.  Thus $x\in V$.  Define $W=V\cap X_{s^*s}\cap X_{t^*t}$. We claim $(s,W)$ and $(t,W)$ are disjoint neighborhoods of $[s,x]$ and $[t,x]$, respectively.  Indeed, if $[r,z]\in (s,W)\cap (t,W)$, then $[s,z]=[r,z]=[t,z]$ and hence there exists $u\leq s,t$ with $z\in X_{u^*u}$.  But then $u\leq u_i$ for some $i=1,\ldots,n$ and so $z\in X_{u_i^*u_i}$, contradicting that $z\in W\subseteq V$.

Trivially (2) implies (3).  For (3) implies (1), let $s,t\in S$ and suppose $s^{\downarrow}\cap t^{\downarrow}\neq \emptyset$.  Proposition~\ref{semibooleanalgebra} implies that $(s,D(s^*s))\cap (t,D(t^*t))$ is compact.  But clearly \[(s,D(s^*s))\cap (t,D(t^*t))=\bigcup_{u\in s^{\downarrow}\cap t^{\downarrow}} (u,D(u^*u))\] since $[s,x]=[t,x]$ if and only if there exists $u\leq s,t$ with $x\in D(u^*u)$.  By compactness, we may find $u_1,\ldots,u_n\in s^{\downarrow}\cap t^{\downarrow}$ so that \[(s,D(s^*s))\cap (t,D(t^*t))= (u_1,D(u_1^*u_1))\cup \cdots \cup (u_n,D(u_n^*u_n)).\]  We claim that $u_1,\ldots,u_n$ generate the downset $s^{\downarrow}\cap t^{\downarrow}$.  Indeed, if $u\leq s,t$ then $[u,\chi_{(u^*u)^{\uparrow}}]\in (s,D(s^*s))\cap (t,D(t^*t))$ and so $[u,\chi_{(u^*u)^{\uparrow}}]\in(u_i,D(u_i^*u_i))$ for some $i$.  But then $u\leq u_i$ by Lemma~\ref{embedunderlying}.  This completes the proof.
\end{proof}

Examples of semigroups that are weak semilattices include $E$-unitary and $0$-$E$-unitary inverse semigroups~\cite{Lawson,Exel,Paterson,lenz}.  Notice that if $s\in S$, then the map $x\mapsto x^*x$ provides an order isomorphism between $s^{\downarrow}$ and $(s^*s)^{\downarrow}$; the inverse takes $e$ to $se$.  Recall that a poset is called \emph{Noetherian} if it satisfies ascending chain condition on downsets, or equivalent every downset is finitely generated.  If each principal downset of $E(S)$ is Noetherian, it then follows from the above discussion that $S$ is a weak semilattice.  This occurs of course if $E(S)$ is finite, or if each principal downset of $E(S)$ is finite.  More generally, if each principal downset of $E(S)$ contains no infinite ascending chains and no infinite anti-chains, then $S$ is a weak semilattice.

\begin{Example}[A non-Hausdorff groupoid]
The following example can be found in~\cite[Appendix C]{Paterson}.  Define a commutative inverse semigroup $S=\mathbb N\cup \{\infty,z\}$ by inflating $\infty$ to a cyclic group $\{\infty,z\}$ of order $2$ in the example just after Proposition~\ref{descendingchain}.  Here $\mathbb N\cup \{\infty\}$ is the semilattice considered there with $0<i<\infty$ for $i\geq 1$ and all other elements incomparable.  One has $\{\infty,z\}$ is a cyclic group of order $2$ with non-trivial element $z$.  The element $z$ acts as the identity on $\mathbb N$.  Then $\infty^{\downarrow}\cap z^{\downarrow} = \mathbb N$ is not finitely generated as a downset, in fact the positive naturals are an infinite anti-chain of maximal elements.  It follows $S$ is not a weak semilattice and so $\mathscr G(S)$ is not Hausdorff. Moreover, the compact open slice $(z,D(\infty))$ is not closed and hence $\chi_{(z,D(\infty))}$ is a discontinuous element of $K\mathscr G(S)$.
\end{Example}

\subsection{Covariant representations}
The purpose of this subsection is to indicate that if $S$ is an inverse semigroup with a non-degenerate action on a locally compact boolean space $X$, then $K(S\ltimes X)$ can be thought of as a cross product $KX\rtimes S$.  Let us make this precise.

\begin{Def}[Covariant representation]\label{covariant}
 Let $\theta\colon S\to I_X$ be an ample action of an inverse semigroup $S$.  A \emph{covariant representation} of the dynamical system $(\theta,S,X)$ on a $K$-algebra $A$ is a pair $(\pi,\sigma)$ where $\pi\colon KX\to A$ is a $K$-algebra homomorphism and $\sigma\colon S\to A$ is a homomorphism such that:
\begin{enumerate}
 \item $\pi(f\theta_{s^*}) = \sigma(s)\pi(f)\sigma(s^*)$ for $f\in KX_{s^*s}$;
\item $\pi(\chi_{X_e}) = \sigma(e)$ for $e\in E(S)$.
\end{enumerate}
\end{Def}

See~\cite{Exel,Paterson} for more on covariant representations in the analytic context.
It turns out that covariant representations are in bijection with $K$-algebra homomorphisms in the Hausdorff context and so $K(S\ltimes X)$ has the universal property of a cross product $KX\rtimes S$.  Probably this remains true in the non-Hausdorff case as well.

\begin{Prop}\label{newamplestuff}
Let $\theta\colon S\to I_X$ be an ample action and put $\mathscr G=S\ltimes X$.  Then $\Gamma = \{(s,U)\mid  U\in B(X_{s^*s})\}$ is an inverse subsemigroup of $\mathscr G^a$ which is a basis for the topology of $\mathscr G$ and such that $E(\Gamma)$ generates $B(X)$ as a generalized boolean algebra. Consequently, $K\mathscr G$ is spanned by all characteristic functions $\chi_{(s,U)}$ with $U\in B(X_{s^*s})$.
\end{Prop}
\begin{proof}
Lemma~\ref{slicehom} implies that $\Gamma$ is an inverse subsemigroup of $\mathscr G^a$.  We already know it is a basis for the topology of $\mathscr G$.  If $U\in B(X)$, then since $\bigcup_{e\in E(S)} X_e=X$, compactness of $U$ implies $U\subseteq X_{e_1}\cup\cdots \cup X_{e_n}$ for some $e_1,\ldots, e_n\in E(S)$.  Then $X_{e_i}\cap U\in B(X_{e_i})$, for $i=1,\ldots,n$ and $U=(X_{e_1}\cap U)\cup \cdots \cup (X_{e_n}\cap U)$.  Since $X_{e_i}\cap U\in E(\Gamma)$ for all $i$, we conclude that $E(\Gamma)$ generates $B(X)$ as a generalized boolean algebra.  The final statement now follows from Proposition~\ref{smgroupgen}.
\end{proof}

We are now almost ready to establish the equivalence between covariant representations and $K$-algebra homomorphisms for the case of a Hausdorff groupoid of germs. First we recall some measure-theoretic definitions.

\begin{Def}[Semiring]
A collection $S$ of subsets of a set $X$ is called a \emph{semiring of subsets} if:
\begin{enumerate}
 \item $\emptyset\in S$;
\item $S$ is closed under pairwise intersections;
\item If $A,B\in S$, then $A\setminus B$ is a finite disjoint union of elements of $S$.
\end{enumerate}
\end{Def}

In the context of measure theory, a generalized boolean algebra of subsets is usually called a ring of subsets.
In this language a standard measure theoretic result is that the generalized boolean algebra generated by a semiring $S$ consists precisely of the finite disjoint unions of elements of $S$.

\begin{Def}[Additive function]
Let $S$ be a collection of subsets of $X$ and $A$ an abelian group.  Then a function $\mu\colon S\to A$ is said to be \emph{additive} if whenever $A,B\in S$ are disjoint and $A\cup B\in S$, then $\mu(A\cup B) = \mu(A)+\mu(B)$.
\end{Def}

The following measure-theoretic lemma goes back to von Neumann.

\begin{Lemma}[Extension principle]\label{extension}
Let $S$ be a semiring of subsets of $X$ and suppose $\mu\colon S\to A$ is an additive function to an abelian group $A$.  Then there is a unique additive function $\mu'\colon R(S)\to A$ extending $\mu$ where $R(S)$ is the generalized Boolean algebra (or ring of subsets) generated by $S$.
\end{Lemma}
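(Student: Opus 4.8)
The plan is to define $\mu'$ on finite disjoint decompositions and then verify in turn that it is well defined, additive, extends $\mu$, and is unique. By the standard fact recalled just above the lemma, every $E\in R(S)$ can be written as a finite disjoint union $E=P_1\sqcup\cdots\sqcup P_n$ with $P_i\in S$; so I would simply set $\mu'(E)=\sum_{i=1}^n\mu(P_i)$. Everything then rests on showing this prescription does not depend on the chosen decomposition; granting that, additivity, the extension property, and uniqueness are all short.

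For well-definedness I would use a common refinement. Suppose $P_1\sqcup\cdots\sqcup P_n=Q_1\sqcup\cdots\sqcup Q_m$ with all $P_i,Q_j\in S$ and each family pairwise disjoint. Put $C_{ij}=P_i\cap Q_j$, which lies in $S$ because $S$ is closed under pairwise intersections. For fixed $i$ the sets $C_{ij}$ ($j=1,\dots,m$) are pairwise disjoint with $P_i=\bigsqcup_j C_{ij}$, and likewise $Q_j=\bigsqcup_i C_{ij}$. Hence, once I know that $\mu$ is additive over a finite disjoint decomposition of a \emph{single} element of $S$ (call this the additivity sublemma), I obtain $\mu(P_i)=\sum_j\mu(C_{ij})$ and $\mu(Q_j)=\sum_i\mu(C_{ij})$, whence $\sum_i\mu(P_i)=\sum_{i,j}\mu(C_{ij})=\sum_j\mu(Q_j)$, as required.

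The heart of the matter, and the step I expect to be the main obstacle, is the additivity sublemma: if $P\in S$ and $P=C_1\sqcup\cdots\sqcup C_r$ with all $C_k\in S$, then $\mu(P)=\sum_k\mu(C_k)$. The difficulty is that a semiring is not closed under unions, so one cannot simply peel off one piece at a time and invoke the pairwise hypothesis, because the partial unions and relative complements need not lie in $S$. The way around this is to exploit the semiring axioms directly: by induction one first proves the partition-extension property, namely that a finite pairwise-disjoint subfamily of $P$ contained in $P$ can always be completed to a finite $S$-partition of $P$ (using that $P\setminus C$ is a finite disjoint union of members of $S$ for each $C\in S$). Refining two $S$-partitions of $P$ against one another, while keeping every intermediate cell inside $S$, then reduces the required equality to the additivity hypothesis applied to decompositions that genuinely sit inside $S$. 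I expect the bookkeeping here, rather than any single idea, to be the delicate part.

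Finally, the remaining statements follow quickly. Given disjoint $E,F\in R(S)$ with chosen decompositions, their concatenation decomposes $E\cup F$, so $\mu'(E\cup F)=\mu'(E)+\mu'(F)$, and thus $\mu'$ is additive. Taking a one-term decomposition shows $\mu'|_S=\mu$, so $\mu'$ extends $\mu$. For uniqueness, if $\nu\colon R(S)\to A$ is any additive extension of $\mu$, then since $R(S)$ is closed under unions the additivity of $\nu$ iterates to finite additivity, giving $\nu(E)=\sum_i\nu(P_i)=\sum_i\mu(P_i)=\mu'(E)$ for every decomposition $E=\bigsqcup_i P_i$; hence $\nu=\mu'$.
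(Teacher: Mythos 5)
Your proof has the right overall architecture, and there is in fact nothing in the paper to compare it against: the lemma is stated there without proof, as a classical fact going back to von Neumann. Defining $\mu'(E)=\sum_i\mu(P_i)$ over a finite disjoint $S$-decomposition of $E\in R(S)$, proving independence of the decomposition via the common refinement $C_{ij}=P_i\cap Q_j$, and then reading off additivity, the extension property, and uniqueness is the standard route, and those closing steps are all correct granted your ``additivity sublemma'': $\mu(P)=\sum_k\mu(C_k)$ whenever $P\in S$ is partitioned into $C_1,\dots,C_r\in S$.

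That sublemma, however, is a genuine gap, and it cannot be closed by bookkeeping: with the paper's definition of \emph{additive} (which constrains $\mu$ only on pairs of disjoint members of $S$ whose union again lies in $S$), the sublemma---and with it the lemma itself---is false. Take $X=\{1,2,3\}$ and $S=\{\emptyset,\{1\},\{2\},\{3\},X\}$. This is a semiring (for instance $X\setminus\{1\}=\{2\}\sqcup\{3\}$), and the only disjoint pairs in $S$ whose union stays in $S$ involve $\emptyset$; hence any $\mu$ with $\mu(\emptyset)=0$ is additive in the paper's sense. Take $\mu(\{1\})=\mu(\{2\})=\mu(\{3\})=0$ and $\mu(X)=1$ in $A=\mathbb Z$. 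Then $R(S)=2^X$, and pairwise additivity on $R(S)$ does iterate (partial unions stay in $R(S)$), so any additive extension $\mu'$ would have to satisfy $1=\mu'(X)=\mu'(\{1\})+\mu'(\{2\})+\mu'(\{3\})=0$, a contradiction. This is exactly where your sketch breaks down: in ``refining two $S$-partitions of $P$ against one another \dots\ reduces the required equality to the additivity hypothesis,'' the pairwise hypothesis never becomes applicable, because the partial unions in question generally leave $S$; what you need there is not delicate to prove but simply false under this hypothesis. The repair is to take as hypothesis the standard notion of additivity for set functions on a semiring, which is what von Neumann's lemma assumes: $\mu(P)=\sum_k\mu(C_k)$ for every finite disjoint $S$-decomposition of every $P\in S$. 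Under that reading your sublemma \emph{is} the hypothesis, the rest of your argument is complete and correct, and the paper's application of the lemma is unharmed, since there any disjoint $\Gamma$-decomposition of a member of $\Gamma$ takes place inside a single generalized Boolean algebra $B((s,X_{s^*s}))$, on which the given pairwise additivity does iterate to finite additivity.
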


We are now ready for the main result of this subsection.

\begin{Thm}
Let $\theta\colon S\to I_X$ be an ample action such that $S\ltimes X$ is Hausdorff and let $A$ be a $K$-algebra.  Then there is a bijection between $K$-algebra homomorphisms $\p\colon K(S\ltimes X)\to A$ and covariant representations $(\pi,\sigma)$ of $(\theta,S,X)$ on $A$.
\end{Thm}
\begin{proof}
Set $\mathscr G=S\ltimes X$.   First suppose that $\p\colon K\mathscr G\to A$ is a $K$-algebra homomorphism.  Notice that since $X=\mathscr G^0$ is an open subspace of $\mathscr G$, it follows that $KX$ is a subspace of $K\mathscr G$.  In fact, it is a subalgebra with pointwise product (say by Proposition~\ref{convolutionwelldefined}).  So define $\pi=\p|_{KX}$.  For $s\in S$, put $\wh s=\chi_{(s,X_{s^*s})}$ and define $\sigma(s)=\p(\wh s)$.  Then $\sigma$ is clearly a homomorphism.  Let us verify the two axioms for covariant representations. The second axiom is immediate from the definitions since if $e$ is an idempotent of $S$, then $(e,X_e)$ is the open subset of $\mathscr G^0$ corresponding to $X_e$ under our identification of $\mathscr G^0$ with $X$.

Suppose $f\in KX_{s^*s}$.  Then we compute
\begin{equation}\label{aconjugationformula}
\wh s\ast f\ast \wh {s^*}(x) = \sum_{y\in d\inv d(x)}\wh s(xy\inv)\sum_{z\in d\inv d(y)} f(yz\inv)\wh {s^*}(z).
\end{equation}
Since $f$ has support in $X_{s^*s}$, it follows that to get a non-zero value we must have $y=z$ and $r(y)\in X_{s^*s}$.  Moreover, $d(x)=d(y)=d(z)\in X_{ss^*}$ and $y=z=[s^*,d(x)]$.  Also to obtain a non-zero value we need $xy\inv = x[s,s^*d(x)]=[s,s^*d(x)]$.  Thus $x=d(x)\in X_{ss^*}$ and $yz\inv = r(y) = r(z)=s^*x$.  So \eqref{aconjugationformula} is $0$ if $x\notin X_{ss^*}$ and otherwise is $f(s^*x)$,  This implies $\wh s\ast f\ast \wh{s^*} = f\theta_{s^*}$ and so \[\sigma(s)\pi(f)\sigma(s^*)=\p(\wh s\ast f\ast \wh{s^*}) = \pi(f\theta_{s^*})\] establishing (1) of Definition~\ref{covariant}.

Conversely, suppose $(\pi,\sigma)$ is a covariant representation on $A$. Fix $s\in S$. Observe that $B(X_{s^*s})\cong B((s,X_{s^*s}))$ via the map $U\mapsto (s,U)$. Define a map $\mu_s\colon B((s,X_{s^*s}))\to A$ by $\mu_s((s,U)) = \sigma(s)\pi(\chi_U)$.  We claim that $\mu_s$ is additive.  Indeed, if $U,V\in B(X_{s^*s})$ are disjoint, then $\chi_{U\cup V} = \chi_U+\chi_V$ so
\begin{align*}
\mu_s((s,U)\cup (s,V)) &= \sigma(s)\pi(\chi_{U\cup V})= \sigma(s)\pi(\chi_U)+\sigma(s)\pi(\chi_V)\\ &=\mu_s((s,U))+\mu_s((s,V)).
\end{align*}

Next we claim that if $U\in B((s,X_{s^*s}))\cap B((t,X_{t^*t}))$, then $\mu_s(U)=\mu_t(U)$.  Put $V=d(U)$. Then $U=\{[s,x]\mid x\in V\}=\{[t,x]\mid x\in V\}$.  For each $x\in V$, we can find an element $u_x\in X$ so that $u_x\leq s,t$ and $x\in X_{u_x^*u_x}$.  By compactness of $V$, we conclude that there exist $v_1,\ldots, v_n\leq s,t$ so that if $V_i = X_{v_i^*v_i}\cap V$, then $V=V_1\cup\cdots\cup V_n$.  Since $B(V)$ is a boolean algebra, we can refine this to a disjoint union $V=U_1\cup\cdots\cup U_m$ such that there are elements $u_1,\ldots, u_m\leq s,t$ (not necessarily all distinct) so that $U_i\subseteq X_{u_i^*u_i}\cap V$ and $U_i\in B(X)$, for $i=1,\ldots, m$ (cf.\ Step 2 of Theorem~\ref{presentation}).  Then $U = (u_1,U_1)\cup\cdots\cup (u_m,U_m)$ as a disjoint union.  By additivity of $\mu_s$ and $\mu_t$, it therefore suffices to show that if $w\leq s,t$ and $W\subseteq X_{w^*w}$, then $\mu_s((w,W))=\mu_t((w,W))$.  Equivalently, we must show $\sigma(s)\pi(\chi_W)=\sigma(t)\pi(\chi_W)$.  Now we compute \[\sigma(s)\pi(\chi_W) = \sigma(s)\pi(\chi_{X_{w^*w}}\chi_W) = \sigma(sw^*w)\pi(\chi_W) = \sigma(w)\pi(\chi_W)\] and similarly $\sigma(t)\pi(\chi_W)=\sigma(w)\pi(\chi_W)$.  This concludes the proof that $\mu_s(U)=\mu_t(U)$.

Let $\Gamma$ be as in Proposition~\ref{newamplestuff}; notice that $\Gamma=\bigcup_{s\in S}B((s,X_{s^*s}))$.  Then there is a well-defined function $\mu\colon \Gamma\to A$ given by $\mu((s,U))  =\mu_s((s,U))$.  Moreover, $\mu$ is additive since the disjoint union of two elements of $\Gamma$ belongs to $\Gamma$ if and only if they both belong to $B(s,X_{s^*s})$ for some $s$ and then one applies the additivity of $\mu_s$.  Since $\mathscr G$ is Hausdorff, Proposition~\ref{semibooleanalgebra} shows that $\mathscr G^a$ is closed under intersection and relative complement.  Now clearly, $\Gamma$ is a semiring of subsets of $\mathscr G$ since it is a downset in $\mathscr G^a$ and hence closed under finite intersections and relative complements.  On the other hand, since $\Gamma$ is a basis for the topology of $\mathscr G$ and each element of $\mathscr G^a$ is compact, it follows that $\mathscr G^a$ is contained in the generalized boolean algebra generated by $\Gamma$.  Lemma~\ref{extension} now provides a well-defined additive function $\mu'\colon \mathscr G^a\to A$ extending $\mu$.  To show that $\mu'$ is a semigroup homomorphism, it suffices to show that its restriction $\mu$ to the subsemigroup $\Gamma$ is a homomorphism since $\mu'$ is additive and the product distributes over those disjoint unions that exist in $\mathscr G^a$.

So suppose $(s,U),(t,V)\in \Gamma$.  Then we compute
\begin{align*}
\mu((s,U))\mu((t,V)) &= \sigma(s)\pi(\chi_U)\sigma(t)\pi(\chi_V)\\
&= \sigma(s)\pi(\chi_U)\sigma(tt^*)\sigma(t)\pi(\chi_V) \\
&= \sigma(s)\sigma(t)\sigma(t^*)\pi(\chi_{U\cap X_{tt^*}})\sigma(t)\pi(\chi_V) \\
&= \sigma(st)\pi(\chi_{U\cap X_{tt^*}}\theta_t\cdot \chi_V).
\end{align*}
But $[\chi_{U\cap X_{tt^*}}\theta_t(x)]\chi_V(x)$ is $1$ if and only if $x\in V$ and $tx\in U$, which occurs if and only if $x\in t^*(U\cap tV)$.  Indeed, if $x\in V$ and $tx\in U$, then $tx\in tV$ and so $x=t^*tx\in t^*(U\cap tV)$.  Conversely, $x\in t^*(U\cap tV)$ implies there exists $v\in V$ so that $tv\in U$ and $x=t^*tv=v$.  Thus $x\in V$ and $tx=tv\in U$.  We conclude $\mu((s,U))\mu((t,V)) = \sigma(st)\pi(\chi_{t^*(U\cap tV)})$.
On the other hand, $(s,U)(t,V) = (st,t^*(U\cap tV))$ by Proposition~\ref{slicehom} and so $\mu((s,U)(t,V))= \sigma(st)\pi(\chi_{t^*(U\cap tV)})$.  This completes the proof that $\mu$, and hence $\mu'$, is a homomorphism.

Since $\mu'\colon \mathscr G^a\to A$ is an additive semigroup homomorphism, Theorem~\ref{presentation} provides a homomorphism $\p\colon K\mathscr G\to A$ satisfying $\p(\chi_{(s,U)}) = \sigma(s)\pi(\chi_{U})$.  In particular, $\p(\chi_{(s,X_{s^*s})}) = \sigma(s)\pi(\chi_{X_{s^*s}}) = \sigma(s)\sigma(s^*s)=\sigma(s)$ for $s\in S$.  On the other hand, if $U\in B(X)$, we can write $U=U_1\cup\cdots \cup U_n$ a disjoint union where $U_i\subseteq X_{e_i}$ for some idempotent $e_i$ using compactness of $U$ and the $X_e$, as well as the non-degeneracy of the action.  Then
\begin{align*}
\p(\chi_U) &= \p(\chi_{U_1})+\cdots+\p(\chi_{U_n})= \sigma(e_1)\pi(\chi_{U_1})+\cdots+\sigma(e_n)\pi(\chi_{U_n}) \\&=  \pi(\chi_{U_1})+\cdots+\pi(\chi_{U_n}) = \pi(\chi_U)
\end{align*}
since $\sigma(e_i)\pi(\chi_{U_i}) = \pi(\chi_{X_{e_i}\cap U_i}) = \pi(\chi_{U_i})$.  This proves that the two constructions in this proof are inverse to each other, establishing the desired bijection.
\end{proof}

\section{The isomorphism of algebras}
The main theorem of this section says that if $K$ is any unital commutative ring endowed with the discrete topology and $S$ is an inverse semigroup, then $KS\cong K\mathscr G(S)$.  The idea is to combine Paterson's proof for $C^*$-algebras~\cite{Paterson} with the author's proof for inverse semigroups with finitely many idempotents~\cite{mobius1,mobius2}. Recall that the semigroup algebra $KS$ is the free $K$-module with basis $S$ equipped with the usual convolution product
\[\sum_{s\in S}c_ss\cdot \sum_{t\in S} d_tt = \sum_{s,t\in S}c_sd_tst.\]  In the case that $K=\mathbb C$, we make $\mathbb CS$ into a $\ast$-algebra by taking \[\left(\sum_{s\in S} c_ss\right)^* = \sum_{s\in S} \ov{c_s}s^*.\]

We begin with a lemma that is an easy consequence of Rota's theory of M\"obius inversion~\cite{Stanley,Burnsidealgebra}.
\begin{Lemma}\label{easymobius}
Let $P$ be a finite poset.  Then the set $\{\chi_{p^{\downarrow}}\mid p\in P\}$ is a basis for $K^P$.
\end{Lemma}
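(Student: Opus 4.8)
The plan is to exhibit the change-of-basis matrix between the proposed set and the standard coordinate basis of $K^P$ and to show it is invertible over $K$. First I would record that $K^P$ is a free $K$-module of rank $|P|$ with basis the point masses $\{\delta_p\mid p\in P\}$, where $\delta_p(q)=1$ if $q=p$ and $0$ otherwise. Since the set $\{\chi_{p^{\downarrow}}\mid p\in P\}$ also has $|P|$ elements, it suffices to prove that it is a spanning set, or equivalently that the matrix expressing the $\chi_{p^{\downarrow}}$ in terms of the $\delta_q$ is invertible.

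Expanding, one has $\chi_{p^{\downarrow}}=\sum_{q\leq p}\delta_q$, so the relevant matrix is precisely the zeta matrix $\zeta$ of the poset $P$, with $\zeta(q,p)=1$ when $q\leq p$ and $0$ otherwise. The key step is to choose a linear extension of $P$, i.e.\ to enumerate $P=\{p_1,\ldots,p_n\}$ so that $p_i\leq p_j$ implies $i\leq j$. With respect to this enumeration $\zeta$ is triangular with $1$'s on the diagonal, hence has determinant $1$, a unit of $K$. Therefore $\zeta$ is invertible over $K$ and the $\chi_{p^{\downarrow}}$ form a basis.

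Alternatively, and in keeping with the reference to Rota's theory, I would invoke M\"obius inversion to write down the inverse explicitly: if $\mu$ denotes the (integer-valued) M\"obius function of $P$, then $\delta_p=\sum_{q\leq p}\mu(q,p)\chi_{q^{\downarrow}}$, the values $\mu(q,p)\in\mathbb Z$ being interpreted in $K$ via the canonical ring homomorphism $\mathbb Z\to K$. This both recovers the standard basis from the $\chi_{p^{\downarrow}}$ and makes the inversion constructive.

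The only real obstacle is that $K$ is an arbitrary commutative ring rather than a field, so one cannot argue by a dimension count together with linear independence over a field. Both the triangularity argument (determinant equal to the unit $1$) and the integrality of the M\"obius function circumvent this, since each remains valid over any commutative ring with unit.
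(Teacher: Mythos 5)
Your proof is correct and matches the paper's treatment almost exactly: the paper proves the lemma by the same M\"obius inversion identity $\delta_p=\sum_{q\leq p}\mu(q,p)\chi_{q^{\downarrow}}$, and then records your unitriangular-matrix argument (via a linear extension of $P$) as an alternative remark immediately afterward. You have simply swapped which of the two arguments is primary; both correctly handle the fact that $K$ is an arbitrary commutative ring with unit rather than a field.
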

\begin{proof}
 The functions $\{\delta_p\mid p\in P\}$ form the standard basis for $K^P$.  With respect to this basis \[\chi_{p^{\downarrow}} = \sum_{q\leq p} \delta_q.\]  Thus by M\"obius inversion, \[\delta_p = \sum_{q\leq p}\chi_{q^{\downarrow}}\mu(q,p)\] where $\mu$ is the M\"obius function of $P$.  This proves the lemma.
\end{proof}

Alternatively, one can order $P=\{p_1,\ldots, p_n\}$ so that $p_i\leq p_j$ implies $i\leq j$.  The linear transformation $p_i\mapsto \sum_{p_j\leq p_i} p_j$ is given by a unitriangular integer matrix and hence is invertible over any commutative ring with unit.

As a corollary, we obtain the following infinitary version.

\begin{Cor}\label{easymobius2}
Let $P$ be a poset.  Then the set $\{\chi_{p^{\downarrow}}\mid p\in P\}$ in $K^P$ is linearly independent.
\end{Cor}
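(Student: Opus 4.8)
The plan is to reduce the infinitary statement to the finite Lemma~\ref{easymobius} by exploiting the fact that linear independence is a purely finitary condition: a family is linearly independent as soon as every finite subfamily is. So I would begin by assuming a finite linear dependence $\sum_{i=1}^n c_i\chi_{p_i^{\downarrow}}=0$ in $K^P$, where $p_1,\ldots,p_n$ are \emph{distinct} elements of $P$ and each $c_i\in K$, and aim to show $c_1=\cdots=c_n=0$.

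The key step is to pass to the finite subposet $Q=\{p_1,\ldots,p_n\}$ equipped with the order induced from $P$. The crucial observation, which is immediate but worth spelling out, is that for $q,p\in Q$ one has $q\leq p$ in $Q$ if and only if $q\leq p$ in $P$; consequently the restriction of $\chi_{p_i^{\downarrow}}$ (the downset taken in $P$) to the subset $Q$ is exactly $\chi_{p_i^{\downarrow_Q}}$, the characteristic function of the downset of $p_i$ computed inside $Q$. Restricting the assumed relation to $Q$ therefore yields $\sum_{i=1}^n c_i\chi_{p_i^{\downarrow_Q}}=0$ in $K^Q$.

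Now I would invoke Lemma~\ref{easymobius} applied to the finite poset $Q$: the set $\{\chi_{q^{\downarrow_Q}}\mid q\in Q\}$ is a basis, hence linearly independent, in $K^Q$. Since $Q=\{p_1,\ldots,p_n\}$ consists of $n$ distinct elements, this family is precisely $\{\chi_{p_i^{\downarrow_Q}}\mid i=1,\ldots,n\}$, and its linear independence forces $c_1=\cdots=c_n=0$. As the finite dependence was arbitrary, the full family $\{\chi_{p^{\downarrow}}\mid p\in P\}$ is linearly independent, as required.

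There is essentially no hard part here: the entire content is the finite case already dispatched in Lemma~\ref{easymobius} (via M\"obius inversion or the unitriangularity remark following it). The only point demanding a moment's care is the compatibility of downsets under passage to the induced order on $Q$, i.e.\ that restricting $\chi_{p_i^{\downarrow}}$ to $Q$ does not record any elements of $p_i^{\downarrow}$ lying outside $Q$ and so coincides with $\chi_{p_i^{\downarrow_Q}}$; everything else is the standard ``independence is finitary'' reduction.
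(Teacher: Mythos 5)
Your proof is correct and follows essentially the same route as the paper: both reduce to a finite subset of $P$, restrict along the projection $K^P\to K^Q$, and invoke Lemma~\ref{easymobius} on the induced finite subposet. The only difference is cosmetic—you make explicit the compatibility $\chi_{p^{\downarrow}}|_Q=\chi_{p^{\downarrow_Q}}$ that the paper leaves implicit.
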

\begin{proof}
It suffices to show that, for any finite subset $F\subseteq P$, the set $F'=\{\chi_{p^{\downarrow}}\mid p\in F\}$ is linearly independent.  Consider the projection $\pi\colon K^P\to K^F$ given by restriction.  Lemma~\ref{easymobius} implies that $\pi(F')$ is a basis for $K^F$.  We conclude that $F'$ is linearly independent.
\end{proof}

We are now ready for one of our main theorems, which generalizes the results of~\cite{mobius1,mobius2} for the case of an inverse semigroup with finitely many idempotents.

\begin{Thm}\label{mainthm}
Let $K$ be a commutative ring with unit and $S$ an inverse semigroup.  Then the homomorphism $\Theta\colon S\to K\mathscr G(S)$ given by $\Theta(s) = \chi_{(s,D(s^*s))}$ extends to an isomorphism of $KS$ with $K\mathscr G(S)$.  Moreover, when $K=\mathbb C$ the map $\Theta$ extends to a $\ast$-isomorphism.
\end{Thm}
\begin{proof}
Proposition~\ref{amplestuff} establishes everything except the injectivity of the induced homomorphism $\Theta\colon KS\to K\mathscr G(S)$.  This amounts to showing that the set of elements $\{\Theta(s)\mid s\in S\}$ is linearly independent.  The key idea is to exploit the dense embedding of  the underlying groupoid of $S$ as a subgroupoid of $\mathscr G(S)$ from Lemma~\ref{embedunderlying}.  More precisely, Lemma~\ref{embedunderlying}  provides an injective mapping $S\to \mathscr G(S)$ given by $s\mapsto [s,\chi_{(s^*s)^{\uparrow}}]=\wh{s}$.

Define a $K$-linear map $\psi\colon K\mathscr G\to K^S$ by $\psi(f)(s) = f(\wh s)$.  Then, if $t\in S$, one has that $\psi(\Theta(t)) = \chi_{t^{\downarrow}}$ by Lemma~\ref{embedunderlying}.   Corollary~\ref{easymobius2} now implies that $\psi(\Theta(S))$ is linearly independent and hence $\Theta(S)$ is linearly independent, completing the proof.
\end{proof}

In the case that $E(S)$ is finite, one has that $\mathscr G(S)$ is the underlying groupoid and so we recover the following result of the author~\cite{mobius1,mobius2} (the final statement of which a proof can be found in these references).

\begin{Cor}
Let $S$ be an inverse semigroup so that $E(S)$ is finite and suppose $K$ is a commutative ring with unit.  Let $\ov S= \{\ov s\mid s\in S\}$ be a disjoint copy of $S$.  Endow $K\ov S$ with a multiplicative structure by putting \[\ov s\cdot \ov t= \begin{cases} \ov{st} & s^*s=tt^*\\ 0 & \text{else.}\end{cases}\]  Then there is an isomorphism from $KS$ to $K\ov S$ sending $s$ to $\sum_{t\leq s}\ov t$.  Hence $KS$ is isomorphic to a finite direct product of finite dimensional matrix algebras over the group algebras of maximal subgroups of $S$.
\end{Cor}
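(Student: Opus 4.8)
The plan is to deduce the corollary from Theorem~\ref{mainthm} by specializing the universal groupoid to the discrete case. When $E(S)$ is finite, every filter on $E(S)$ is principal by Proposition~\ref{descendingchain}, so $\wh{E(S)}$ is discrete and in bijection with $E(S)$ via $e\mapsto \chi_{e^{\uparrow}}$; hence $\mathscr G(S)=S\ltimes_\beta\wh{E(S)}$ is a discrete groupoid, the underlying groupoid of $S$, whose arrows are identified with the elements of $S$ through $t\mapsto \wh t=[t,\chi_{(t^*t)^{\uparrow}}]$ (Lemma~\ref{embedunderlying}), with $d(\wh t)=t^*t$ and $r(\wh t)=tt^*$. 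By Remark~\ref{discretegroupoid}, $K\mathscr G(S)$ is then the groupoid algebra with basis $\{\delta_{\wh t}\mid t\in S\}$ and product $\delta_{\wh s}\ast \delta_{\wh t}=\delta_{\wh{st}}$ when $d(\wh s)=r(\wh t)$, i.e.\ $s^*s=tt^*$, and $0$ otherwise. This is precisely the multiplication defining $K\ov S$, so $\ov s\mapsto \delta_{\wh s}$ is an isomorphism $K\ov S\cong K\mathscr G(S)$.

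Next I would compute the image of a basis element $s\in S$ under the isomorphism $\Theta\colon KS\to K\mathscr G(S)$ of Theorem~\ref{mainthm}. Since $\Theta(s)=\chi_{(s,D(s^*s))}$ and the groupoid is discrete, $\Theta(s)=\sum_{g\in (s,D(s^*s))}\delta_g$. As $\wh{E(S)}$ is discrete with points $\chi_{f^{\uparrow}}$, the slice consists of the germs $[s,\chi_{f^{\uparrow}}]$ with $f\le s^*s$. For such $f$ one has $sf\le s$ and $(sf)^*(sf)=fs^*sf=f$, whence $[s,\chi_{f^{\uparrow}}]=[sf,\chi_{f^{\uparrow}}]=\wh{sf}$ by Lemma~\ref{embedunderlying}. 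As $f$ runs over $(s^*s)^{\downarrow}$ the element $sf$ runs bijectively over $s^{\downarrow}$, so $(s,D(s^*s))=\{\wh t\mid t\le s\}$ and $\Theta(s)=\sum_{t\le s}\delta_{\wh t}$. Composing $\Theta$ with the isomorphism $K\mathscr G(S)\cong K\ov S$ therefore sends $s\mapsto \sum_{t\le s}\ov t$, giving the first assertion.

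For the final statement I would invoke the structure theory of algebras of finite discrete groupoids applied to $\mathscr G(S)$. Its object set is $E(S)$, which is finite, and two objects $e,f$ lie in the same connected component exactly when $e\D f$; thus $\mathscr G(S)$ is a finite disjoint union of connected (transitive) groupoids, one for each $\mathscr D$-class of idempotents. A connected groupoid on $n$ objects with isotropy group $G$ has groupoid algebra isomorphic to $M_n(KG)$, the isomorphism being the familiar one that, after fixing connecting arrows out of a base object, records an arrow from the $j$-th to the $i$-th object as a matrix unit times a group element. Taking a representative idempotent $e$ of each $\mathscr D$-class, whose isotropy group is the maximal subgroup $G_e$ and whose object count $n_e$ is the finite number of idempotents $\D$-equivalent to $e$, yields $K\mathscr G(S)\cong \prod_e M_{n_e}(KG_e)$, a finite direct product of finite dimensional matrix algebras over the group algebras $KG_e$.

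The computations in the second paragraph are routine; the one point needing genuine care is the identity $(s,D(s^*s))=\{\wh t\mid t\le s\}$, which is where the finiteness of $E(S)$ (forcing the discreteness of $\wh{E(S)}$) and Lemma~\ref{embedunderlying} are both essential, and I expect this to be the main obstacle. The groupoid-algebra decomposition in the last paragraph is classical, so I would only sketch it and refer to~\cite{mobius1,mobius2} for a self-contained proof of the matrix-algebra statement.
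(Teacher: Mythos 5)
Your proposal is correct and takes essentially the same route as the paper: the paper's proof consists precisely of observing that, when $E(S)$ is finite, the universal groupoid $\mathscr G(S)$ is the discrete underlying groupoid of $S$, so that Theorem~\ref{mainthm} together with Remark~\ref{discretegroupoid} gives the isomorphism $KS\cong K\mathscr G(S)\cong K\ov S$, with the matrix-algebra decomposition deferred to~\cite{mobius1,mobius2}. Your explicit verification that $(s,D(s^*s))=\{\wh t\mid t\le s\}$, hence $\Theta(s)=\sum_{t\le s}\delta_{\wh t}$, merely fills in details the paper leaves implicit, and you defer the final statement to the same references the paper does.
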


The special case where $S=E(S)$ was first proved by Solomon~\cite{Burnsidealgebra}.
As a consequence of Theorem~\ref{mainthm} and Proposition~\ref{unital}, we obtain the following topological criterion for an inverse semigroup algebra to have a unit as well as a characterization of the center of $KS$.

\begin{Cor}
Let $K$ be a commutative ring with unit and $S$ an inverse semigroup.  Then $KS$ has a unit if and only if $\wh{E(S)}$ is compact.  The center of $KS$ is the space of class functions on $\mathscr G(S)$.
\end{Cor}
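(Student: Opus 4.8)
The plan is to deduce both assertions directly from Theorem~\ref{mainthm} by transporting the relevant structural facts about $K\mathscr G(S)$ across the isomorphism $\Theta\colon KS\to K\mathscr G(S)$. The one point to keep in mind is that the universal groupoid is $\mathscr G(S)=S\ltimes_\beta \wh{E(S)}$, and that by the groupoid of germs construction its unit space $\mathscr G(S)^0$ is identified, via the projection $[e,x]\mapsto x$, with $X=\wh{E(S)}$.

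First I would handle the existence of a unit. Since $\Theta$ is an isomorphism of $K$-algebras, $KS$ is unital if and only if $K\mathscr G(S)$ is unital. By Proposition~\ref{unital}, the latter holds precisely when $\mathscr G(S)^0$ is compact. As $\mathscr G(S)^0=\wh{E(S)}$, this is exactly the condition that $\wh{E(S)}$ be compact, which gives the first statement. For the center, I would again use that any isomorphism of algebras restricts to an isomorphism of centers, so $\Theta$ carries $Z(KS)$ isomorphically onto $Z(K\mathscr G(S))$. The earlier computation of the center of an ample groupoid algebra identifies $Z(K\mathscr G(S))$ with the space of class functions on $\mathscr G(S)$, and transporting this back along $\Theta$ yields the second statement.

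There is essentially no genuine obstacle here: both claims are formal consequences of Theorem~\ref{mainthm} combined with Proposition~\ref{unital} and the description of the center. The only thing requiring a moment of care is the identification $\mathscr G(S)^0=\wh{E(S)}$, which is built into the definition of the groupoid of germs and hence available without further argument.
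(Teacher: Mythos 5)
Your proposal is correct and is exactly the paper's argument: the corollary is stated there as an immediate consequence of Theorem~\ref{mainthm} together with Proposition~\ref{unital} and the class-function description of the center, transported along the isomorphism $\Theta$ with the standing identification $\mathscr G(S)^0=\wh{E(S)}$. Nothing in your write-up deviates from, or adds to, what the paper intends.
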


Let $FIM(X)$ be the free inverse monoid on a set $X$ with $|X|\geq 1$.  Crabb and Munn described the center of $KFIM(X)$ in~\cite{Munncentre}.  We give a topological proof of their result using that $KFIM(X)\cong K\mathscr G(FIM(X))$ by describing explicitly the class functions on $\mathscr G(FIM(X))$.   The reader should consult~\cite{Lawson} for the description of elements of $FIM(X)$ as Munn trees.

\begin{Thm}
Let $X$ be a non-empty set.  Then if $|X|=\infty$, the center of $KFIM(X)$ consists of the scalar multiples of the identity.  Otherwise, $Z(KFIM(X))$ is a subalgebra of $KE(FIM(X))$ isomorphic to the algebra of functions $f\colon FIM(X)/{\mathscr D}\to K$ spanned by the finitely supported functions and the constant map to $1$.
\end{Thm}
\begin{proof}
The structure of $\mathscr G(FIM(X))$ is well known cf.~\cite[Chapter 4]{Paterson} or~\cite{strongmorita}.  Let $F(X)$ be the free group on $X$ and denote its Cayley graph by $\Gamma$.  Let $\mathscr T$ be the space of all subtrees of $\Gamma$ containing $1$.  Viewing a subtree as the characteristic function of a map $V(\Gamma)\cup E(\Gamma)\to \{0,1\}$, we give $\mathscr T$ the topology of pointwise convergence.  It is easy to see that $\mathscr T$ is a closed subspace of $\{0,1\}^{V(\Gamma)\cup E(\Gamma)}$.  The space $\mathscr T$ is homeomorphic to the character space of $E(FIM(X))$.  The groupoid $\mathscr G=\mathscr G(FIM(X))$ consists of all pairs $(w,T)\in F(X)\times \mathscr T$ so that $w\in V(T)$.  The topology is the product topology.  In particular, the pairs $(w,T)$ with $T$ finite are dense in $\mathscr G$.  One has $d(w,T) = w\inv T$, $r(w,T)= T$ and the product is defined by $(w,T)(w',T') = (ww',T)$ if $w\inv T=T'$.  The inverse is given by $(w,T)\inv = (w\inv ,w\inv T)$.  The groupoid $\mathscr G$ is Hausdorff and so $K\mathscr G$ consists of continuous functions with compact support in the usual sense.

Let $f$ be a class function.
We claim that the support of $f$ is contained in $\mathscr G^0=\mathscr T$.   Since $f$ is continuous with compact support and $K$ is discrete, it follows that $f\inv(K\setminus\{0\})$ is compact open and hence the support of $f$.  Thus   $f\inv(K\setminus\{0\})$ is of the form $(\{w_1\}\times C_1)\cup\cdots \cup (\{w_m\}\times C_m)$ where the $C_i$ are compact open subsets of $\mathscr T$.   Suppose that $(w,T)\in \{w_i\}\times C_i$ with $w\neq 1$, and so in particular $w=w_i$.   As $\{w_i\}\times C_i$ is open and the finite trees are dense, there is a finite tree $T'$ containing $1$ and $w$ so that $(w,T')$ belongs  $\{w_i\}\times C_i$.  But no finite subtree of $\Gamma$ is invariant under a non-trivial element of $F(X)$, so $d(w,T')=w\inv T'\neq T'=r(w,T')$ and hence $f(w,T')=0$ as $f$ is a class function.  This contradiction shows that the support of $f$ is contained in $\mathscr G^0$.

Thus we may from now on view $f$ as a continuous function with compact support on $\mathscr T$.
Next observe that if $f(T)=c$ for a tree $T$ and $u\in V(T)$,  then $f(u\inv T)=c$. Indeed, $d(u,T) = u\inv T$ and $r(u,T) = T$. Thus $f(u\inv T) = f((u,T)\inv (1,T)(u,T)) = f(T)=c$ as $f$ is a class function.

Let $f$ be a class function.  Since $K$ is discrete, $f=c_1\chi_{U_1}+\cdots+c_k\chi_{U_k}$ where $U_1,\ldots, U_k$ are non-empty disjoint compact open subsets of $\mathscr T$ and $c_1,\ldots, c_k$ are distinct non-zero elements of $K$.  It is easy to see then that $\chi_{U_1},\ldots, \chi_{U_k}$ must then be class functions.  In other words, the class functions are spanned by the characteristic functions $\chi_U$ of compact open subsets $U$ of $\mathscr T$ so that $T\in U$ implies $u\inv T\in U$ for all vertices $u$ of $T$.

Suppose first that $X$ is infinite.  We claim that no proper non-empty compact open subset $U$ of $\mathscr T$ has the above property.  Suppose this is not the case.  Then there is a subtree $T_0$ that does not belong to $U$.  Since $X$ is infinite and $U$ is determined by a boolean formula which is a finite disjunction of allowing/disallowing finitely many vertices and edges of $\Gamma$, there is a letter $x\in X$ so that no vertex or edge in the boolean formula determining $U$ involves the letter $x$. Let $T\in U$.  Then $T\cup xT_0\in U$ since the edges and vertices appearing in $xT_0$ are irrelevant in the definition of $U$ and $T\in U$.  Thus $x\inv(T\cup xT_0) =x\inv T\cup T_0\in U$.  But since the edges and vertices appearing $x\inv T$ again are irrelevant to the boolean formula defining $U$, we must have $T_0\in U$, a contradiction.

Next suppose that $X$ is finite.  First note that the finite trees form a discrete subspace of $\mathscr T$.  Indeed,  if $T$ is a finite subtree of $\Gamma$ containing $1$, then since $X$ is finite there are only finitely many edges of $\Gamma$ incident on $T$ that do not belong to it.  Then the neighborhood of $T$ consisting of all subtrees containing $T$ but none of these finitely many edges incident on $T$ consists only of $\{T\}$.  So if $T$ is a finite tree, then $U_T=\{v\inv T\mid v\in T\}$ is a finite open subset of $\mathscr T$ and hence its characteristic function belongs to the space of class functions.  We claim that the space of class functions has basis the functions of the form $\chi_{U_T}$  with $T$ a finite subtree of $\Gamma$ containing $1$ and of the identity $\chi_{\mathscr G^0}$.  This will prove the theorem since the sets of the form $U_T$ are in bijection with the $\D$-classes of $S$.

So let $U$ be a compact open set so that $T\in U$ and $u\in T$ implies $u\inv T\in U$.  Suppose that $U$ contains only finite trees.  Since the finite trees are discrete in $\mathscr T$ by the above argument, it follows that $U$ is finite.  The desired claim now follows from the above case. So we may assume that $U$ contains an infinite tree $T$. Since $X$ is finite, it is easy to see that there exists $N>0$ so that $U$ consists of those subtrees of $\Gamma$ whose closed ball of radius $N$ around $1$ belongs to a certain subset $F$ of the finite set of possible closed balls of radius $N$ of an element of $\mathscr T$.  We claim that $U$ contains all infinite trees.  Then applying the previous case to the complement of $U$ proves the theorem.

Suppose first $|X|=1$. Then some translate of the infinite subtree $T$ has closed ball of radius $N$ around $1$ a path of length $2N$ centered around $1$ and so this closed ball belongs to $F$.  However, all infinite subtrees of $\Gamma$ have this closed ball as the ball of radius $N$ around $1$ for some translate.  Thus $U$ contains all the infinite subtrees.

Next suppose $|X|\geq 2$.  Let $T'$ be an infinite tree with closed ball $B$ of radius $N$ around $1$ and let $v$ be a leaf of $B$ at distance $N$ from $1$ (such exists since $T'$ is infinite and $X$ is finite).  Then there is a unique edge of $B$ with terminal vertex $v$, let us assume it is labeled by $x^{\epsilon}$ with $x\in X$ and $\epsilon =\pm 1$.  Since $T$ is infinite, we can find a vertex $u$ of $T$ at distance $N$ from $1$.  Let $T_0$ be the closed ball of radius $N$ in $T$ around $1$.  Then in $T_0$, the vertex $u$ is the endpoint of a unique edge of $\Gamma$.  If this edge is not labeled by $x^{\epsilon}$, then put $T_1 = T_0\cup uv\inv B$.  Otherwise, choose $y\in X$ with $y\neq x$ and put $T_1 = T_0\cup \{u\xrightarrow{y} uy\}\cup uyv\inv B$.  In either case, the closed balls of radius $N$ around $1$ in $T_1$ and $T$ coincide, and so $T_1\in U$.  But there is a translate $T_2$ of $T_1$ so that the closed ball of radius $N$ about $1$ in $T_2$ is exactly $B$.  Thus $B\in F$ and so $T'\in U$.  This completes the proof of the theorem.
\end{proof}

Let $\mathscr G$ be an ample groupoid and $C_c(\mathscr G)$ be the usual algebra of continuous (complex-valued) functions with compact support on $\mathscr G$~\cite{Exel,Paterson}.  Notice that $\mathbb C\mathscr G$ is a subalgebra of $C_c(\mathscr G)$ since any continuous function to $\mathbb C$ with respect to the discrete topology is continuous in the usual topology.  Let $\|\cdot \|$ be the $C^*$-norm on $C_c(\mathscr G)$~\cite{Exel,Paterson}.  The following is essentially~\cite[Proposition 2.2.7]{Paterson}.

\begin{Prop}
Let $\mathscr G$ be an ample groupoid with $\mathscr G^0$ countably based.  Then $C^*(\mathscr G)$ is the completion of $\CC\mathscr G$ with respect to its own universal $C^*$-norm.
\end{Prop}
\begin{proof}
To prove the theorem, it suffices to verify that any non-degenerate $\ast$-representation $\pi\colon \CC\mathscr G\to \mathscr B(H)$ with $H$ a (separable) Hilbert space extends uniquely to $C_c(\mathscr G)$.  Indeed, this will show that $\CC\mathscr G$ is dense in the $C^*$-norm on $C_c(\mathscr G)$ and that the restriction of the $C^*$-norm on $C_c(\mathscr G)$ to $\CC\mathscr G$ is its own $C^*$-norm. Suppose $V$ is an open neighborhood in $\mathscr G$ and $f\in C_c(V)$. Then since $\mathscr G$ has a basis of compact open subsets, we can cover the compact support of $f$ by a compact open subset $U$.    Thus it suffices to define the extension of $\pi$ for any $f\in C(U)$ where $U$ is a compact open subset of $\mathscr G$.   Since $U$ has a basis of compact open subsets, the continuous functions on $U$ with respect to the discrete topology separate points. The Stone-Weierstrass Theorem implies that we can find a sequence $f_n\in \CC\mathscr G$ so that $\|f_n-f\|_{\infty}\to 0$.
Now the argument of~\cite[Proposition 3.14]{Exel} shows that if $g\in C(U)\cap \CC\mathscr G$, then $\|\pi(g)\|\leq \|g\|_{\infty}$.  It follows that $\pi(f_n)$ is a Cauchy sequence and so has a limit that we define to be $\pi(f)$.  It is easy to check that $\pi(f)$ does not depend on the Cauchy sequence by a simple interweaving argument.  The reader should verify that $\pi$ is a representation of $C_c(\mathscr G)$ cf.~\cite{Paterson}.  It is the unique extension since if $\pi'$ is an extension, then~\cite[Proposition 3.14]{Exel} implies $\|\pi'(g)\|\leq \|g\|_{\infty}$ for $g\in C(U)$.  Thus $\|\pi(f_n)-\pi'(f)\|\to 0$ and so $\pi'(f)=\pi(f)$.
\end{proof}

We now recover an important result of Paterson~\cite{Paterson}.
\begin{Cor}[Paterson]
Let $S$ be an inverse semigroup.  Then there is an isomorphism $C^*(S)\cong C^*(\mathscr G(S))$ of universal of $C^*$-algebras.
\end{Cor}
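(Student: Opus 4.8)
The plan is to combine the $\ast$-algebra isomorphism of Theorem~\ref{mainthm} with the identification of $C^*(\mathscr G(S))$ as a universal completion of $\CC\mathscr G(S)$ supplied by the previous proposition. Recall that, by definition, $C^*(S)$ is the enveloping $C^*$-algebra of the $\ast$-algebra $\CC S$: its norm is $\|a\| = \sup_{\pi}\|\pi(a)\|$, the supremum ranging over all $\ast$-representations $\pi$ of $\CC S$ on Hilbert space, and $C^*(S)$ is the completion of $\CC S$ in this norm.

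The first step is to invoke Theorem~\ref{mainthm} in the case $K=\CC$, which furnishes a $\ast$-isomorphism $\Theta\colon \CC S\to \CC\mathscr G(S)$. The second step is the observation that the universal $C^*$-norm is an intrinsic invariant of a $\ast$-algebra: any $\ast$-isomorphism $\alpha\colon A\to B$ induces, via $\pi\mapsto \pi\circ\alpha\inv$, a bijection between the $\ast$-representations of $B$ and those of $A$, whence $\|\alpha(a)\| = \|a\|$ in the respective universal norms. Consequently $\Theta$ is isometric for these norms and extends to a $\ast$-isomorphism of completions, identifying $C^*(S)$ with the enveloping $C^*$-algebra of $\CC\mathscr G(S)$.

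The third step is to identify this enveloping $C^*$-algebra with $C^*(\mathscr G(S))$. This is exactly the content of the previous proposition, which shows that $C^*(\mathscr G)$ --- defined as the completion of $C_c(\mathscr G)$ --- already arises as the completion of the dense $\ast$-subalgebra $\CC\mathscr G$ in its own universal norm. Applying this to $\mathscr G = \mathscr G(S)$ and composing the resulting identifications yields $C^*(S)\cong C^*(\mathscr G(S))$, as desired.

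The main obstacle, and the point requiring care, is the hypothesis of the previous proposition that $\mathscr G^0$ be countably based, since the corollary is stated for an arbitrary inverse semigroup $S$ while $\wh{E(S)}$ need not be second countable when $E(S)$ is uncountable. The cleanest resolution is to observe that the separability in that proposition is used only to organize an arbitrary $\ast$-representation, and that a standard decomposition into cyclic representations, each supported on a separable subspace, lets one bound $\|\pi(g)\|\leq \|g\|_{\infty}$ for $g\in C(U)$ with $U$ compact open without any countability assumption; the Stone--Weierstrass density of $\CC\mathscr G$ in $C_c(\mathscr G)$ used there is likewise insensitive to the cardinality of a basis, since sup-norm density already forces each compactly supported continuous function to be a limit of a \emph{sequence} from $\CC\mathscr G$. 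Granting the norm comparison in this generality, the three steps above go through verbatim.
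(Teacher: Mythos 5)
Your three-step argument is precisely the paper's own proof: the corollary is presented there, with no further argument, as an immediate consequence of Theorem~\ref{mainthm} in the case $K=\CC$ (the $\ast$-isomorphism $\CC S\cong\CC\mathscr G(S)$) together with the preceding proposition realizing $C^*(\mathscr G)$ as the completion of $\CC\mathscr G$ in its own universal $C^*$-norm, which is exactly your composition of identifications. The countability mismatch you flag in your final paragraph is genuine but is inherited by the paper itself (Paterson's theorem carries a standing countability hypothesis on $S$, which makes $\wh{E(S)}$ second countable, and the paper applies the proposition without comment); just be aware that your sketched repair is imprecise as stated --- cyclic subspaces for a representation of an uncountable-dimensional $\ast$-algebra need not be separable --- although your underlying point is sound, namely that the sup-norm bound $\|\pi(g)\|\leq\|g\|_{\infty}$ for $g$ supported on a compact open set and the Stone--Weierstrass density of $\CC\mathscr G$ in $C_c(\mathscr G)$ use no separability.
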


Paterson also established an isomorphism of reduced $C^*$-algebras~\cite{Paterson}.

\section{Irreducible representations}
  Our aim is to construct the finite dimensional irreducible representations of an arbitrary inverse semigroup over a field $K$ and determine when there are enough such representations to separate points.  Our method can be viewed as a generalization of the groupoid approach of the author~\cite{mobius1,mobius2} to the classical theory of Munn and Ponizovsky~\cite{CP} for inverse semigroups with finitely many idempotents.  See also~\cite{myirreps}.  We begin by describing all the finite dimensional irreducible representations of an ample groupoid.   The desired result for inverse semigroups is deduced as a special case via the universal groupoid.

In fact, much of what we do works over an arbitrary commutative ring with unit $K$, which shall remain fixed for the section. Let $A$ be a $K$-algebra.  We say that an $A$-module $M$ is \emph{non-degenerate} if $AM=M$.   We consider here only the category of non-degenerate $A$-modules.  So when we write the words ``simple module,'' this should be understood as meaning non-degenerate simple module.  Note that if $A$ is unital, then an $A$-module is non-degenerate if and only if the identity of $A$ acts as the identity endomorphism.   A representation of $A$ is said to be \emph{non-degenerate} if the corresponding module is non-degenerate.  As usual, there is a bijection between isomorphism classes of (finite dimensional) simple $A$-modules and equivalence classes of non-degenerate (finite dimensional) irreducible representations of $A$ by $K$-module endomorphisms.

\subsection{Irreducible representations of ample groupoids}
Fix an ample groupoid $\mathscr G$.  Then one can speak about the orbit of an element of $\mathscr G^0$ and its isotropy group.

\begin{Def}[Orbit]
Define an equivalence relation on $\mathscr G^0$ by setting $x\sim y$ if there is an arrow $g\in \mathscr G$ such that $d(g)=x$ and $r(g)=y$.  An equivalence class will be called an \emph{orbit}.   If $x\in \mathscr G^0$, then \[G_x=\{g\in \mathscr G\mid d(g)=x=r(g)\}\] is called the \emph{isotropy group} of $\mathscr G$ at $x$.  It is well known and easy to verify that up to conjugation in $\mathscr G$ (and hence isomorphism) the isotropy group of $x$ depends only on the orbit of $x$.  Thus we may speak abusively of the isotropy group of the orbit.
\end{Def}

To motivate the terminology, if $G$ is a group acting on a space $X$, then the orbit of $x\in X$ in the groupoid $G\ltimes X$ is exactly the orbit of $x$ in the usual sense.  Moreover, the isotropy group of $x$ in $G\ltimes X$ is isomorphic to the stabilizer in $G$ of $x$ (i.e., the usual isotropy group).

\begin{Rmk}[Underlying groupoid]
If $S$ is an inverse semigroup and $\mathscr G$ is its underlying groupoid, then the orbit of $e\in E(S)$ is precisely the set of idempotents of $S$ that are $\mathscr D$-equivalent to $e$ and the isotropy group $G_e$ is the maximal subgroup at $e$~\cite{Lawson}.
\end{Rmk}

In an ample groupoid, the orbit of a unit is its orbit under the action of $\mathscr G^a$ described earlier.  Indeed, given $d(g)=x$ and $r(g)=y$, choose a slice $U\in \mathscr G^a$ containing $g$.  Clearly, we have $Ux=y$.  Conversely, if $U$ is a slice with $Ux=y$, then we can find $g\in U$ with $d(g)=x$ and $r(g)=y$.

The following lemma seems worth noting, given the importance of finite orbits in what follows.  One could give a topological proof along the lines of~\cite{inversetop} since this is essentially the same argument used in computing the fundamental group of a cell complex.

\begin{Lemma}\label{fundamentalgroup}
Let $S$ be an inverse semigroup with generating set $A$ acting non-degenerately on a space $X$ and let $\mathscr O$ be the orbit of $x\in X$.  Fix, for each $y\in \mathscr O$, an element $p_y\in S$ so that $p_yx=y$ where we choose $p_x$ to be an idempotent.  For each pair $(a,y)\in A\times \mathscr O$ such that $ay\in \mathscr O$, define $g_{a,y} = [p_{ay}^*ap_y,x]$.  Then the isotropy group $G_x$ is generated by the set of elements $\{g_{a,y}\mid a\in A, y,ay\in \mathscr O\}$.  In particular, if $A$ and $\mathscr O$ are finite, then $G_x$ is finitely generated.
\end{Lemma}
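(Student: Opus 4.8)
The plan is to run the standard ``fundamental group of a graph'' argument, with the family $\{p_y\}$ playing the role of a spanning tree (a choice of path from the basepoint $x$ to each vertex $y$ of the orbit) and each $g_{a,y}$ playing the role of the loop obtained by travelling out to $y$ along the tree, crossing the edge $a$ to $ay$, and returning along the tree. First I would record that each $g_{a,y}$ genuinely lies in $G_x$: since $p_yx=y$, the element $ay$ is defined, and $p_{ay}x=ay$, so a direct computation gives $(p_{ay}^*ap_y)x = p_{ay}^*(ay)=x$, whence the germ $[p_{ay}^*ap_y,x]$ has domain and range equal to $x$. It therefore suffices to show these elements generate. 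To that end, take an arbitrary $g=[s,x]\in G_x$, so that $x\in X_{s^*s}$ and $sx=x$, and write $s=b_1b_2\cdots b_k$ as a word in $A\cup A^*$ using that $A$ generates $S$.

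Next I would introduce the intermediate orbit points. Set $x_k=x$ and $x_{i-1}=b_ix_i$ for $i=k,\dots,1$, so that $x_0=sx=x$. Each suffix $b_{i+1}\cdots b_k$ is defined at $x$, since the partial homeomorphism $\varphi_s$ is the composite $\varphi_{b_1}\circ\cdots\circ\varphi_{b_k}$ and the domain of a composite is contained in the domain of its inner factors; hence every $x_i$ is defined and lies in $\mathscr O$. The crux is the telescoping identity
\[ g=[s,x]=\prod_{i=1}^{k}\,[\,p_{x_{i-1}}^*\,b_i\,p_{x_i},\,x\,]. \]
To verify it, observe that each factor lies in $G_x$ (its range is $p_{x_{i-1}}^*b_ix_i=p_{x_{i-1}}^*x_{i-1}=x$), so the germs all compose and multiply to the germ at $x$ of the word $p_{x_0}^*b_1e_1b_2e_2\cdots e_{k-1}b_kp_{x_k}$, where $e_i=p_{x_i}p_{x_i}^*$. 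Since $p_{x_0}=p_{x_k}=p_x$ is an idempotent fixing $x$, and since each internal idempotent $e_i$ fixes the intermediate point $x_i$ and hence restricts to the identity on a neighbourhood of $x_i$, deleting these idempotents changes neither the action on $x$ nor the germ; the word collapses to $b_1\cdots b_k=s$, giving the identity.

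Finally I would identify each factor with a generator or its inverse. If $b_i=a\in A$, then $x_{i-1}=ax_i$, so $p_{x_{i-1}}=p_{ax_i}$ and the factor equals $g_{a,x_i}$. If $b_i=a^*$ with $a\in A$, then $ax_{i-1}=x_i$, and taking the inverse $[t,x]^{-1}=[t^*,x]$ (valid since $t$ fixes $x$) shows that $[p_{x_{i-1}}^*a^*p_{x_i},x]=g_{a,x_{i-1}}^{-1}$; in both cases the relevant points lie in $\mathscr O$, so these are legitimate generators. As $G_x$ is a group, inverses are available, and we conclude that $g$ lies in the subgroup generated by $\{g_{a,y}\mid a\in A,\ y,ay\in\mathscr O\}$. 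The ``in particular'' clause is then immediate, since finiteness of $A$ and $\mathscr O$ bounds the number of admissible pairs $(a,y)$. I expect the main obstacle to be the telescoping identity: the honest justification that the internal idempotents $e_i=p_{x_i}p_{x_i}^*$ may be discarded at the level of germs, which rests on the fact that $e_i$ fixes precisely the point $x_i$ reached by the suffix word and is the identity near it, so that the germ equivalence relation identifies the padded word with $s$.
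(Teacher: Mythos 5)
Your overall strategy is the same as the paper's: pad the word for $s$ with the idempotents $p_{x_i}p_{x_i}^*$ (and $p_x$ at the ends), observe that the padded word has the same germ at $x$ as $s$, and telescope it into the generators; your identification of each factor with $g_{a,x_i}$ or $g_{a,x_{i-1}}^{-1}$, and the finiteness conclusion, are all correct. However, there is a genuine gap at exactly the step you flag as the crux. You justify the identity $[s,x]=[\,\text{padded word},x]$ by saying that each internal idempotent $e_i=p_{x_i}p_{x_i}^*$ ``is the identity near $x_i$,'' so deleting it ``changes neither the action on $x$ nor the germ.'' But germ equivalence in $S\ltimes X$ is not the function-germ relation: by definition, $(t,x)\sim(s,x)$ requires a common lower bound $u\leq s,t$ \emph{in the inverse semigroup} with $x\in X_{u^*u}$. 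Local agreement of the partial homeomorphisms does not imply this. The paper's own example of the trivial action on a one-point space makes this vivid: there every element of $S$ acts as the identity near the unique point, yet the groupoid of germs is the maximal group image of $S$, which separates any two elements lacking a common lower bound. So ``the maps agree near $x$'' is not a valid reason for equality of germs, and as stated your key step does not follow.

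The correct justification --- and the one the paper uses --- is order-theoretic rather than topological. The padded word $t=p_{x_0}^*\,b_1\,e_1\,b_2\cdots e_{k-1}\,b_k\,p_{x_k}$ satisfies $t\leq s$ in the natural partial order, because inserting an idempotent into a product moves you down: $uev=(ueu^*)(uv)$ with $ueu^*$ idempotent, and likewise multiplying by the idempotent $p_x=p_{x_0}=p_{x_k}$ at either end. Your step-by-step computation of the points $x_i$ already shows that $t$ is defined at $x$, i.e.\ $x\in X_{t^*t}$. Hence $t$ itself serves as the required common lower bound of $t$ and $s$, and $[t,x]=[s,x]$ follows immediately from the definition of the germ equivalence. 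With this one-line replacement for your ``identity near $x_i$'' argument, your proof is complete and coincides with the one in the paper.
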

\begin{proof}
First note that if $ay\in \mathscr O$, then $p_{ay}^*ap_yx = p_{ay}^*ay =x$ and so $g_{a,y}\in G_x$.  Let us define, for $a\in A$ and $y\in \mathscr O$ with $a^*y\in \mathscr O$, the element $g_{a^*,y} = [p_{a^*y}^*a^*p_y,x]\in \mathscr G^a$.  Notice that $g_{a^*,y} = g_{a,a^*y}\inv$.

Suppose that $[s,x]\in G_x$ and write $s=a_n\cdots a_1$ with the $a_i\in A\cup A^*$.  Define $x_i = a_i\cdots a_1x$, for $0=1,\ldots, n$ (so $x_0=x=x_n$) and consider the element $t=(p_{x_n}^*a_np_{x_{n-1}})\cdots (p_{x_2}^*a_2p_{x_1})(p_{x_1}^*a_1p_{x_0})$ of $S$.  Then $t\leq s$ and $tx=sx=x$.  Hence $[s,x]=[t,x] = g_{a_n,x_{n-1}}\cdots g_{a_1,x_0}$, as required.
\end{proof}

Applying this to the Munn representation and the underlying groupoid, we obtain the following folklore result (a simple topological proof can be found in~\cite{inversetop}).

\begin{Cor}\label{fgmaxsubgroup}
Let $S$ be a finitely generated inverse semigroup and $e$ an idempotent whose $\D$-class contains only finitely many idempotents.  Then the maximal subgroup $G_e$ of $S$ at $e$ is finitely generated.
\end{Cor}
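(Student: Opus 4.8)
The plan is to obtain this as a direct specialization of Lemma~\ref{fundamentalgroup}, applied to the Munn representation $\mu\colon S\to I_{E(S)}$ on the discrete space $X=E(S)$, whose groupoid of germs is the underlying groupoid of $S$. First I would check that the hypotheses of Lemma~\ref{fundamentalgroup} are in force. The action is non-degenerate: since $X_e=e^{\downarrow}$ and each idempotent lies in its own principal downset, we have $\bigcup_{e\in E(S)}X_e=E(S)=X$. Since $S$ is finitely generated, it admits a finite generating set $A$ (as an inverse semigroup, so that $S$ is generated by $A\cup A^*$ as a semigroup, which is exactly the input used in the proof of Lemma~\ref{fundamentalgroup}).

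Next I would invoke the two identifications recorded in the Remark on the underlying groupoid. Taking $x=e$, the orbit $\mathscr O$ of $e$ under the Munn action is precisely the set of idempotents of $S$ that are $\D$-equivalent to $e$, and the isotropy group of the underlying groupoid at $e$ is exactly the maximal subgroup $G_e$. By hypothesis the $\D$-class of $e$ contains only finitely many idempotents, so $\mathscr O$ is finite.

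With $A$ and $\mathscr O$ both finite, Lemma~\ref{fundamentalgroup} immediately yields that $G_e$ is generated by the finite set $\{g_{a,y}\mid a\in A,\ y,ay\in \mathscr O\}$, and in particular is finitely generated, as required. The argument presents no genuine obstacle: it is a straightforward application of Lemma~\ref{fundamentalgroup}, and the only points needing care are the verification of non-degeneracy and the two identifications (orbit with the $\D$-class of idempotents, isotropy group with the maximal subgroup), all of which are supplied by the earlier discussion of the underlying groupoid in this section.
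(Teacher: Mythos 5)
Your proposal is correct and takes essentially the same approach as the paper: the paper obtains Corollary~\ref{fgmaxsubgroup} precisely by applying Lemma~\ref{fundamentalgroup} to the Munn representation, using the fact that its groupoid of germs is the underlying groupoid, whose orbit of $e$ is the set of idempotents $\D$-equivalent to $e$ and whose isotropy group at $e$ is the maximal subgroup $G_e$. The details you spell out (non-degeneracy of the Munn action and the two identifications) are exactly the points the paper leaves implicit in its one-line derivation.
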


We remark that if we consider the spectral action of $S$ on $\wh{E(S)}$ and $e\in E(S)$, then the orbit of $\chi_{e^{\uparrow}}$ is $\{\chi_{f^\uparrow}\mid f\D e\}$ and $G_{\chi_{e^\uparrow}}=G_e$.

Fix $x\in \mathscr G^0$.  Define $L_x=d\inv(x)$ (inverse semigroup theorists should think of this as the $\mathscr L$-class of $x$).  The isotropy group $G_x$ acts on the right of $L_x$ and $L_x/G_x$ is in bijection with the orbit of $x$ via the map $tG_x\mapsto r(t)$.  Indeed, if $s,t\in L_x$, then $r(s)=r(t)$ implies $t\inv s\in G_x$ and of course $s=t(t\inv s)$.  Conversely, every element of $tG_x$ evidently has range $r(t)$.

There is also a natural action of $\mathscr G^a$ on the left of $L_x$ that we shall call, in analogy with the case of inverse semigroups~\cite{CP}, the \emph{Sch\"utzenberger representation} of $\mathscr G^a$ on $L_x$.  If $U\in \mathscr G^a$, then we define a map \[U\cdot\colon L_x\cap r^{-1}(U\inv U)\to L_x\cap r^{-1}(UU\inv)\] by putting $Ut=st$ where $s$ is the unique element of $U$ with $d(s)=r(t)$ (or equivalent, $Ut=y$ where $yt\inv\in U$).  We leave the reader to verify that this is indeed an action of $\mathscr G^a$ on $L_x$ by partial bijections.

There is an alternative construction of $L_x$ and $G_x$ which will be quite useful in what follows.  Let $\til L_x = \{U\in \mathscr G^a\mid x\in U\inv U\}$ and put $\mathscr G^a_x = \{U\in \mathscr G^a\mid Ux=x\}$.  Notice that $\til L_x = \{U\in \mathscr G^a\mid U\cap L_x\neq \emptyset\}$ and  $\mathscr G^a_x=\{U\in \mathscr G^a\mid U\cap G_x\neq \emptyset\}$.  It is immediate that $\mathscr G^a_x$ is an inverse subsemigroup of $\mathscr G^a$ acting on the right of $\til L_x$.  An element of $\til L_x$ intersects $L_x$ in exactly on element by the definition of a slice.

\begin{Lemma}\label{Lxasgerms}
Define a map $\nu\colon \til L_x\to L_x$ by $U\cap L_x = \{\nu(U)\}$.  Then:
\begin{enumerate}
\item $\nu$ is surjective;
 \item $\nu(U)=\nu(V)$ if and only if $U$ and $V$ have a common lower bound in $\til L_x$;
\item $\nu\colon \mathscr G^a_x\to G_x$ is the maximal group image homomorphism.
\end{enumerate}
\end{Lemma}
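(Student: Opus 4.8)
The plan is to prove the three items in order, using throughout the two structural facts already in place: that each $U\in\til L_x$ meets $L_x$ in exactly one arrow (so $\nu$ is well defined), and that the compact slices form a basis for the topology of $\mathscr G$. For item (1), given $t\in L_x$ I would choose any compact slice $U$ containing $t$; then $x=d(t)\in d(U)=U\inv U$, so $U\in\til L_x$, and since $d|_U$ is injective, $t$ is the unique arrow of $U$ over $x$, whence $\nu(U)=t$. Surjectivity follows at once.

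For item (2), the backward direction is the easy one: if $W\in\til L_x$ is a common lower bound of $U$ and $V$, then $W\subseteq U$ and $W\subseteq V$ (the order is inclusion), so the arrow $\nu(W)\in W$ lying over $x$ also lies in $U$ and in $V$ over $x$; by uniqueness in a slice it equals both $\nu(U)$ and $\nu(V)$. For the forward direction, suppose $\nu(U)=\nu(V)=g$. Then $g\in U\cap V$, which is open, and here is where the basis of compact slices enters: I would pick a compact slice $W$ with $g\in W\subseteq U\cap V$. Then $W\leq U,V$ and $x=d(g)\in d(W)=W\inv W$, so $W$ is a common lower bound in $\til L_x$.

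For item (3), I first check that $\nu$ carries $\mathscr G^a_x$ into $G_x$: since $Ux=r(\nu(U))$ by definition of the action, the condition $Ux=x$ forces $r(\nu(U))=x=d(\nu(U))$, i.e.\ $\nu(U)\in G_x$. That $\nu$ is a homomorphism is a short computation: for $U,V\in\mathscr G^a_x$ with $g=\nu(U)$, $h=\nu(V)\in G_x$, the composite $gh$ is defined, lies in the setwise product $UV\in\mathscr G^a_x$, and satisfies $d(gh)=x$, so $\nu(UV)=gh$. Surjectivity onto $G_x$ is item (1) with the slice chosen around an arbitrary $g\in G_x$, noting $Ux=r(g)=x$. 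Finally, to identify $\nu$ as the maximal group image homomorphism I must show its induced congruence is the minimal group congruence $\sigma$ on $\mathscr G^a_x$, i.e.\ the relation of having a common lower bound~\cite{Lawson}. This is exactly the argument of item (2), except that the compact slice $W$ produced inside $U\cap V$ now automatically lies in $\mathscr G^a_x$: since $\nu(W)=g\in G_x$ we get $Wx=r(g)=x$. Thus $\nu(U)=\nu(V)$ iff $U,V$ have a common lower bound in $\mathscr G^a_x$, which is precisely $\sigma$, and since $\nu$ is a surjective homomorphism onto the group $G_x$ this exhibits $G_x\cong\mathscr G^a_x/\sigma$.

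The only genuine subtlety, and the step I expect to be the main obstacle, is the non-Hausdorff case, where $U\cap V$ need not be a compact slice, so one cannot simply take $U\cap V$ itself as the common lower bound. The resolution, used in both (2) and (3), is to descend to a compact slice $W$ inside $U\cap V$ through the common germ; the crucial verification is that this $W$ actually lies in $\til L_x$ (respectively $\mathscr G^a_x$), and both follow once one observes that $\nu(W)$ equals the common value $g$. Everything else is routine slice bookkeeping.
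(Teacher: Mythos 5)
Your proof is correct and follows essentially the same route as the paper: items (1) and (2) are argued identically (choose a compact slice through the given arrow, respectively through the common germ inside $U\cap V$), and item (3) rests on the same key observation, namely that a common lower bound in $\til L_x$ of elements of $\mathscr G^a_x$ automatically lies in $\mathscr G^a_x$, so the congruence induced by $\nu$ is the minimal group congruence. The only difference is that you spell out the homomorphism, image, and surjectivity checks that the paper dismisses as evident.
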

\begin{proof}
If $t\in L_x$ and $U\in \mathscr G^a$ with $t\in U$, then $U\in \til L_x$ with $\nu(U)=t$.  This proves (1).  For (2),
trivially, if $W\subseteq U,V$ is a common lower bound in $\til L_x$, then $\nu(U)=\nu(W)=\nu(V)$.  Conversely, suppose that $\nu(U)=\nu(V)=t$.  Then $U\cap V$ is an open neighborhood of $t$.  Since $\mathscr G^a$ is a basis for the topology on $\mathscr G$, we can find $W\in \mathscr G^a$ with $t\in W\subseteq U\cap V$.  As $W\in \til L_x$, this yields (2).

Evidentally, $\nu$ restricted to $\mathscr G^a_x$ is a group homomorphism.  By (2), it is the maximal group image since any common lower bound in $\til L_x$ of elements of $\mathscr G^a_x$ belongs to $\mathscr G^a_x$.  This proves (3).
\end{proof}

\begin{Rmk}
In fact, $\nu$ gives a morphism from the right action of $\mathscr G^a_x$ on $\til L_x$ to the right action of $G_x$ on $L_x$.
\end{Rmk}

Consider a free $K$-module $KL_x$ with basis $L_x$.   The right action of $G_x$ on $L_x$ induces a right $KG_x$-module structure on $KL_x$.  Let $T$ be a transversal for $L_x/G_x$.  We assume $x\in T$.

\begin{Prop}\label{free}
The isotropy group $G_x$ acts freely on the right of $L_x$ and hence $KL_x$ is a free right $KG_x$-module with basis $T$.
\end{Prop}
\begin{proof}
It suffices to show that $G_x$ acts freely on $L_x$.  But this is clear since if $t\in L_x$ and $g\in G_x$, then $tg=t$ implies $g=t\inv t=x$.
\end{proof}

We now endow $KL_x$ with the structure of a left $K\mathscr G$-module by linearly extending  the Sch\"utzenberger representation.  Formally, suppose $f\in K\mathscr G$ and $t\in L_x$.  Define
\begin{equation}\label{moduleaction}
ft = \sum_{y\in L_x}f(yt\inv)y.
\end{equation}

\begin{Prop}\label{bimodule}
If $U\in \mathscr G^a$ and $t\in L_x$, then
\begin{equation}\label{schutzrep}
\chi_Ut = \begin{cases} Ut & r(t)\in U\inv U\\
0 & \text{else.}\end{cases}
\end{equation}
Consequently, $KL_x$ is a well-defined $K\mathscr G$-$KG_x$ bimodule.
\end{Prop}
\begin{proof}
Since $K\mathscr G$ is spanned by characteristic functions of elements of $\mathscr G^a$ (Proposition~\ref{characteristicbasis}) in order to show that \eqref{moduleaction} is a finite sum, it suffices to verify \eqref{schutzrep}.  If $r(t)\notin U\inv U$, then $\chi_U(yt\inv)=0$ for all $y\in L_x$.  On the other hand, suppose $r(t)\in U\inv U$ and say $r(t)=d(s)$ with $s\in U$.  Then $Ut=st$  and $y=st$ is the unique element of $L_x$ with $yt\inv\in U$.  Hence $\chi_Ut=y=st=Ut$.   Since the Sch\"utzenberger representation is an action, it follows that \eqref{moduleaction} gives a well-defined left module structure to $KL_x$.

To see that $KL_x$ is a bimodule, it suffices to verify that if $f\in \mathscr G^a$, $g\in G_x$ and $t\in L_x$, then $(ft)g=f(tg)$.  This is shown by the following computation:
\begin{align*}
(ft)g &= \left(\sum_{y\in L_x}f(yt\inv)y\right)g = \sum_{y\in L_x}f(yt\inv)yg\\
f(tg) & = \sum_{z\in L_x}f(zg\inv t\inv)z = \sum_{y\in L_x}f(yt\inv)yg
\end{align*}
where the final equality of the second equation is a consequence of the change of variables $y=zg\inv$.
\end{proof}

We are now prepared to define induced modules.

\begin{Def}[Induction]
For $x\in \mathscr G^0$ and $V$ a $KG_x$-module, we define the corresponding \emph{induced} $K\mathscr G$-module to be $\Ind_x(V)=KL_x\otimes_{KG_x}V$.
\end{Def}

The induced modules coming from elements of the same orbit coincide.  More precisely, if $y$ is in the orbit of $x$ with, say, $d(s)=x$ and $r(s)=y$ and if $V$ is a $KG_x$-module, then $V$ can be made into a $KG_y$-module by putting $gv=s\inv gsv$ for $g\in G_y$ and $v\in V$. Then $\Ind_x(V)\cong \Ind_y(V)$ via the map $t\otimes v\mapsto ts\inv \otimes v$ for $t\in L_x$ and $v\in V$.

The following result, and its corollary, will be essential to studying induced modules.

\begin{Prop}\label{createprojections}
Let $t,u,s_1,\ldots, s_n\in L_x$ with $s_1,\ldots, s_n\notin tG_x$.  Then there exists $U\in \mathscr G^a$ so that $\chi_Ut = u$ and $\chi_Us_i=0$ for $i=1,\ldots,n$.
\end{Prop}
\begin{proof}
The set $B(\mathscr G^0)$ is a basis for the topology of $\mathscr G^0$.  Hence we can find $U_0\in B(\mathscr G^0)$ so that $U_0\cap \{r(t),r(s_1),\ldots,r(s_n)\}= \{r(t)\}$.  Choose $U\in \mathscr G^a$ so that $ut\inv \in U$.  Replacing $U$ by $UU_0$ if necessary, we may assume that $r(s_i)\notin U\inv U$ for $i=1,\ldots,n$.  Then $Ut=ut\inv t=u$ and so $\chi_Ut=u$ by Proposition~\ref{bimodule}. On the other hand,  Proposition~\ref{bimodule}  provides $\chi_Us_i =0$, for $i=1,\ldots,n$. This completes the proof.
\end{proof}

An immediate corollary of the proposition is the following.

\begin{Cor}\label{cyclicmodule}
The module $KL_x$ is cyclic, namely $KL_x=K\mathscr G\cdot x$.  Consequently, if $V$ is a $KG_x$-module, then  $KL_x\otimes_{KG_x} V = K\mathscr G\cdot (x\otimes V)$.
\end{Cor}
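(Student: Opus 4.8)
The plan is to read off both assertions directly from Proposition~\ref{createprojections} together with the bimodule structure established in Proposition~\ref{bimodule}; the corollary is essentially formal once those are in hand.

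First I would establish that $KL_x=K\mathscr G\cdot x$. Note that $x\in L_x$, since $d(x)=x=r(x)$, and recall that $L_x$ is a $K$-basis of $KL_x$; hence it suffices to show that every basis element $t\in L_x$ lies in $K\mathscr G\cdot x$. For this I would invoke Proposition~\ref{createprojections}, taking its source element to be $x$, its target $u$ to be the given $t$, and the collection $s_1,\dots,s_n$ to be empty (i.e.\ $n=0$). This produces $U\in\mathscr G^a$ with $\chi_U x=t$, so that $t=\chi_U x\in K\mathscr G\cdot x$. The reverse inclusion $K\mathscr G\cdot x\subseteq KL_x$ is immediate, as $KL_x$ is a left $K\mathscr G$-module by Proposition~\ref{bimodule} and $x\in KL_x$. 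This settles the first statement.

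For the consequence, let $V$ be a $KG_x$-module and recall from Proposition~\ref{free} that $KL_x$ is free as a right $KG_x$-module on the transversal $T$, which we have chosen to contain $x$. Thus $KL_x\otimes_{KG_x}V$ is spanned over $K$ by the elements $t\otimes v$ with $t\in T$ and $v\in V$, and it suffices to realize each such element inside $K\mathscr G\cdot(x\otimes V)$. Applying the first statement, for each $t\in T$ I would choose $U\in\mathscr G^a$ with $\chi_U x=t$. Since the $K\mathscr G$-action on the tensor product operates through the left factor --- this is precisely where the bimodule structure of Proposition~\ref{bimodule} is used, guaranteeing that the left $K\mathscr G$-action and the right $KG_x$-action commute so that $f\cdot(w\otimes v)=(fw)\otimes v$ is well defined --- I obtain $\chi_U\cdot(x\otimes v)=(\chi_U x)\otimes v=t\otimes v$. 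Hence $t\otimes v\in K\mathscr G\cdot(x\otimes V)$, giving $KL_x\otimes_{KG_x}V\subseteq K\mathscr G\cdot(x\otimes V)$; the opposite inclusion is again immediate.

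There is no serious obstacle here, since the corollary is a direct consequence of Proposition~\ref{createprojections}. The only point meriting any care is the bookkeeping of the two commuting actions on the tensor product, ensuring that $\chi_U$ genuinely carries $x\otimes v$ to $t\otimes v$; this is already encapsulated in the bimodule assertion of Proposition~\ref{bimodule} and so requires no additional argument.
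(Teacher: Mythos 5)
Your proof is correct and is exactly the paper's intended argument: the paper presents this statement as an immediate corollary of Proposition~\ref{createprojections}, and your write-up simply makes that immediacy explicit (applying the proposition with $n=0$ to hit each $t\in L_x$ from $x$, then transporting this across the tensor product via the bimodule structure of Proposition~\ref{bimodule}). No gaps; nothing further is needed.
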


It is easy to see that $\Ind_x$ is a functor from the category of $KG_x$-modules to the category of $K\mathscr G$-modules.   We now consider the restriction functor from $K\mathscr G$-modules to $KG_x$-modules, which is right adjoint to the induction functor.

\begin{Def}[Restriction]
For $x\in \mathscr G^0$, let $\mathscr N_x=\{U\in B(\mathscr G^0)\mid x\in U\}$.  If $V$ is a $K\mathscr G$-module, then define $\Res_x(V) = \bigcap_{U\in \mathscr N_x} UV$ where we view $V$ as a $K\mathscr G^a$-module via $Uv = \chi_Uv$ for $U\in \mathscr G^a$ and $v\in V$.
\end{Def}

In order to endow $\Res_x(V)$ with the structure of a $KG_x$-module, we need the following lemma.

\begin{Lemma}\label{welldefineactionofL_x}
Let $V$ be a $K\mathscr G$-module and put $W=\Res_x(V)$.  Then:
\begin{enumerate}
\item $K\mathscr G^a_x\cdot W=W$;
\item If $U\notin \til L_x$, then $UW=\{0\}$;
\item Let $U,U'\in \til L_x$ be such that $\nu(U)=\nu(U')$.  Then $Uw=U'w$ for all $w\in W$.
\end{enumerate}
\end{Lemma}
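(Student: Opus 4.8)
The plan is to reduce all three parts to the single observation that for a compact open $N\subseteq \mathscr G^0$ the element $\chi_N$ is an idempotent of $K\mathscr G$ with $NV=\chi_N V=\{v\in V\mid \chi_N v=v\}$, so that
\[
W=\Res_x(V)=\{v\in V\mid \chi_N v=v\ \text{for all}\ N\in \mathscr N_x\}.
\]
Throughout I would use $\chi_U\ast\chi_{U'}=\chi_{UU'}$ from Proposition~\ref{convolutionwelldefined} together with the identities $U\inv U=d(U)$ and, for $N\subseteq \mathscr G^0$, $UN=\{u\in U\mid d(u)\in N\}$ and $NU=\{u\in U\mid r(u)\in N\}$, all immediate from the slice structure. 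I would also record that $\mathscr N_x\subseteq \mathscr G^a_x$, since a compact open neighbourhood of $x$ is a set of units containing $x$ and so fixes $x$.

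For (2), since $U\notin\til L_x$ we have $x\notin U\inv U=d(U)$; as $\mathscr G^0$ is Hausdorff with a basis of compact open sets, the compact open set $d(U)$ is closed and I can choose $N\in\mathscr N_x$ with $N\cap d(U)=\emptyset$. For $w\in W$ one has $\chi_N w=w$, whence $\chi_U w=\chi_U\chi_N w=\chi_{UN}w=0$, because $UN=\emptyset$ (no $u\in U$ has $d(u)\in N$).

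For (3) I would first invoke Lemma~\ref{Lxasgerms}(2) to replace the hypothesis $\nu(U)=\nu(U')$ by a common lower bound $U''\in\til L_x$ with $U''\subseteq U$ and $U''\subseteq U'$. Since $U''\subseteq U$ one checks $U''=Ue$ for $e=(U'')\inv U''=d(U'')$, giving $\chi_{U''}=\chi_U\chi_e$, and $e\in\mathscr N_x$ precisely because $U''\in\til L_x$. As $\chi_e w=w$ for $w\in W$, this yields $\chi_{U''}w=\chi_U w$, and symmetrically $\chi_{U''}w=\chi_{U'}w$, so $Uw=U'w$.

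Part (1) is where the real work lies. The inclusion $W\subseteq K\mathscr G^a_x\cdot W$ is immediate: any $N\in\mathscr N_x$ lies in $\mathscr G^a_x$ and fixes every $w\in W$, so $w=\chi_N w$. The reverse inclusion, namely that $W$ is invariant under $\chi_U$ for $U\in\mathscr G^a_x$, is the main obstacle: I must show $\chi_N(\chi_U w)=\chi_U w$ for every $N\in\mathscr N_x$. The key point is that $U\in\mathscr G^a_x$ means the partial homeomorphism of the slice $U$ fixes $x$, so by continuity, given $N$, I can find a compact open $M\in\mathscr N_x$ with $M\subseteq d(U)$ and $r(UM)\subseteq N$. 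Then $\chi_M w=w$ gives $\chi_U w=\chi_U\chi_M w=\chi_{UM}w$, while $r(UM)\subseteq N$ gives $N\cdot(UM)=UM$ and hence $\chi_N\chi_{UM}=\chi_{UM}$; combining these yields $\chi_N\chi_U w=\chi_U w$. Extracting $M$ from the continuity of the slice map at its fixed point $x$ is the only delicate step; the rest is bookkeeping with the product formulas recorded above.
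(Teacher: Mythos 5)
Your proof is correct, and in parts (1) and (2) it takes a genuinely different route from the paper; part (3) coincides with the paper's argument (common lower bound $U''\subseteq U,U'$ from Lemma~\ref{Lxasgerms}, then $U''=U\,d(U'')$ with $d(U'')\in\mathscr N_x$). For (2), where you choose $N\in\mathscr N_x$ disjoint from the compact (hence closed) set $d(U)$ so that $UN=\emptyset$, the paper instead argues with Stone duality: the stabilizer $\{N\in B(\mathscr G^0)\mid N(U\inv Uw)=U\inv Uw\}$ is a proper filter containing both the ultrafilter $\mathscr N_x$ and the element $U\inv U$, which forces $U\inv U\in\mathscr N_x$, i.e.\ $U\in\til L_x$. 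Your separation argument is more elementary, needing only Hausdorffness of $\mathscr G^0$ and the basis of compact opens, while the paper's exploits the ultrafilter property of $\mathscr N_x$ that it reuses elsewhere (e.g.\ in Lemma~\ref{issimple} and Proposition~\ref{whenisspectral}). For the hard direction of (1), where you invoke continuity of the partial homeomorphism $\alpha_U$ at its fixed point $x$ to produce $M\in\mathscr N_x$ with $r(UM)\subseteq N$, the paper gets this in one algebraic line: for $U_0\in\mathscr N_x$ one has $U_0Uw=U(U\inv U_0U)w=Uw$, because $U\inv U_0U$ is automatically a compact open subset of $\mathscr G^0$ containing $x$ (conjugation stays inside $\mathscr G^a$). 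Note that $U\inv U_0U$ is exactly the full preimage $\alpha_U\inv(U_0\cap r(U))$, so the paper's conjugate is the canonical choice of your $M$, obtained without appealing to continuity or to the basis of compact opens; this is what the algebraic formulation buys. Conversely, your version makes the dynamical content (the slice fixes $x$, so small neighborhoods of $x$ map into prescribed neighborhoods) explicit, at the cost of a slightly longer bookkeeping step.
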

\begin{proof}
To prove (1), first observe that $\mathscr N_x\subseteq \mathscr G^a_x$ so $W\subseteq K\mathscr G^a_x\cdot W$.  For the converse, suppose that $U\in \mathscr G^a_x$ and $w\in W$.  Let $U_0\in \mathscr N_x$.  Then $U_0Uw = U(U\inv U_0U)w=Uw$ since $x\in U\inv U_0U$ and $w\in W$.  Since $U_0$ was arbitrary, we conclude that $Uw\in W$.

Turning to (2), suppose that $w\in W$ and $Uw\neq 0$.  Then $U\inv Uw\neq 0$.  Suppose $U_0\in \mathscr N_x$.  Then $U_0U\inv Uw=U\inv UU_0w = U\inv Uw$.  Hence the stabilizer in $B(\mathscr G^0)$ of $U\inv Uw$ is a proper filter containing the ultrafilter $\mathscr N_x$ and the element $U\inv U$.  We conclude that $U\inv U\in \mathscr N_x$ and so $U\in \til L_x$.

Next, we establish (3).  If $\nu(U)=\nu(U')$, then $U$ and $U'$ have a common lower bound $U_0\in \til L_x$ by Lemma~\ref{Lxasgerms}.  Hence, for any $w\in W$, we have $Uw=UU_0\inv U_0w= U_0w = U'U_0\inv U_0w = U'w$ as $U_0\inv U_0\in \mathscr N_x$.  This completes the proof.
\end{proof}

As a consequence of Lemmas~\ref{Lxasgerms} and~\ref{welldefineactionofL_x}, for $t\in L_x$ and $w\in \Res_x(V)$, there is a well-defined element $tw$ obtained by putting $tw=Uw$ where $U\in \mathscr G^a$ contains $t$. Trivially, the map $w\mapsto tw$ is linear.  Moreover, if $g\in G_x$ and $g\in U\in \mathscr G^a$, then $U\in \mathscr G^a_x$. Hence this definition gives $W$ the structure of a $KG_x$-module since the action of $\mathscr G^a_x$ on $W$ factors through its maximal group image $G_x$ by the aforementioned lemmas.  In particular, $xw=w$ for $w\in W$.  Let us now prove that if $V$ is a simple $K\mathscr G$-module and $\Res_x(V)\neq 0$, then it is simple.

\begin{Lemma}\label{issimple}
Let $V$ be a simple $K\mathscr G$-module.  Then the $KG_x$-module $\Res_x(V)$ is either zero or a simple $KG_x$-module.
\end{Lemma}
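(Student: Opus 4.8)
The plan is to show that any nonzero $w\in W=\Res_x(V)$ generates all of $W$ as a $KG_x$-module, which is exactly simplicity. First I would record two consequences of the definition $W=\bigcap_{U\in \mathscr N_x}UV$: for every $U_0\in \mathscr N_x$ and $w\in W$ one has $\chi_{U_0}w=w$, since $w=\chi_{U_0}v$ for some $v$ and $\chi_{U_0}$ is idempotent by Proposition~\ref{convolutionwelldefined}(3) (here $U_0\in B(\mathscr G^0)=E(\mathscr G^a)$, so $U_0U_0=U_0$); consequently $w=\chi_{U_0}w\in K\mathscr G\cdot w$. Thus if $w\neq 0$ and $V$ is simple, then the submodule $K\mathscr G\cdot w$ is nonzero and hence equals $V$.

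Now fix $0\neq w\in W$ and let $w'\in W$ be arbitrary. Since $w'\in V=K\mathscr G\cdot w$ and $K\mathscr G$ is spanned by characteristic functions of elements of $\mathscr G^a$ (Proposition~\ref{characteristicbasis}), I can write $w'=\sum_i c_i\chi_{U_i}w$ with $c_i\in K$ and $U_i\in \mathscr G^a$. By Lemma~\ref{welldefineactionofL_x}(2) any term with $U_i\notin \til L_x$ satisfies $\chi_{U_i}w=0$, so I may discard those and assume every $U_i\in \til L_x$. Setting $t_i=\nu(U_i)\in L_x$, the definition of the action of $L_x$ on $W$ gives $\chi_{U_i}w=U_iw=t_iw$, so that $w'=\sum_i c_it_iw$ with each $t_i\in L_x$.

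The crux is to eliminate the terms with $t_i\notin G_x$, equivalently $r(t_i)\neq x$. Because $\mathscr G^0$ is a Hausdorff locally compact boolean space, exactly as in the first step of the proof of Proposition~\ref{createprojections} I can choose $U_0\in \mathscr N_x$ with $r(t_i)\notin U_0$ for every index $i$ such that $r(t_i)\neq x$. Applying $\chi_{U_0}$ and using $\chi_{U_0}w'=w'$ together with Proposition~\ref{convolutionwelldefined}(3) yields $w'=\sum_i c_i\chi_{U_0}\chi_{U_i}w=\sum_i c_i\chi_{U_0U_i}w$. Since $U_0\subseteq \mathscr G^0$ acts as a unit, $U_0U_i=\{g\in U_i\mid r(g)\in U_0\}$, so $U_0U_i\cap L_x=\{t_i\}$ when $r(t_i)\in U_0$ and $U_0U_i\cap L_x=\emptyset$ otherwise; that is, $U_0U_i\in \til L_x$ with $\nu(U_0U_i)=t_i\in G_x$ in the first case and $U_0U_i\notin \til L_x$ in the second. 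Using Lemma~\ref{welldefineactionofL_x}(2) once more to annihilate the latter terms gives $w'=\sum_{t_i\in G_x}c_it_iw\in KG_x\cdot w$. As $w'\in W$ was arbitrary, this proves $KG_x\cdot w=W$, so $W$ is simple whenever it is nonzero. I expect this last reduction to be the main obstacle: the delicate point is controlling, after multiplying by $\chi_{U_0}$, precisely which slices $U_0U_i$ survive in $\til L_x$ and which collapse, and it is here that Hausdorffness of the unit space (to perform the separation) and the vanishing in Lemma~\ref{welldefineactionofL_x}(2) together do the essential work.
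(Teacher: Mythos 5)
Your proof is correct and takes essentially the same route as the paper's: use simplicity to write $w'=\sum_i c_i\chi_{U_i}w$, discard the slices outside $\til L_x$ via Lemma~\ref{welldefineactionofL_x}(2), choose $U_0\in \mathscr N_x$ separating $x$ from the finitely many $r(t_i)\neq x$, and multiply through by $\chi_{U_0}$ so that only terms with $\nu(U_0U_i)\in G_x$ survive. The details you make explicit (the identity $\chi_{U_0}w'=w'$ and the computation $U_0U_i=\{g\in U_i\mid r(g)\in U_0\}$) are precisely what the paper's proof leaves implicit.
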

\begin{proof}
Set $W=\Res_x(V)$ and suppose that $0\neq w\in W$.  We need to show that $KG_x\cdot w=W$.  Let $w'\in W$.  Viewing $V$ as a $K\mathscr G^a$-module, we have $K\mathscr G^a\cdot w=V$ by simplicity of $V$. Hence $w'=(c_1U_1+\cdots +c_nU_n)w$ with $U_1,\ldots,U_n\in \mathscr G^a$.  Moreover, by Lemma~\ref{welldefineactionofL_x} we may assume $U_1,\ldots,U_n\in \til L_x$.  Let $t_i = \nu(U_i)$.  Choose $U\in \mathscr N_x$ so that $r(t_i)\in U$ implies $r(t_i)=x$.  Then $w' = Uw' = (c_1UU_1+\cdots +c_nUU_n)w$.  But $UU_i\in \til L_x$ if and only if $r(t_i) =x$, in which case $UU_i\in \mathscr G^a_x$.  Thus $w'\in K\mathscr G^a_x\cdot w = KG_x\cdot w$.  It follows that $W$ is simple.
\end{proof}

Next we establish the adjunction between $\Ind_x$ and $\Res_x$.  Since the functor $KL_x\otimes_{KG_x} (-)$ is left adjoint to $\mathrm{Hom}_{K\mathscr G}(KL_x,-)$, it suffices to show the latter is isomorphic to $\Res_x$.

\begin{Prop}\label{leftrightadjoint}
The functors $\Res_x$ and $\mathrm{Hom}_{K\mathscr G}(KL_x,-)$ are naturally isomorphic.  Thus $\Ind_x$ is the left adjoint of $\Res_x$.
\end{Prop}
\begin{proof}
Let $V$ be a $K\mathscr G$-module and put $W=\Res_x(V)$.  Define a homomorphism $\psi\colon \mathrm{Hom}_{K\mathscr G}(KL_x,V)\to W$ by $\psi(f) = f(x)$.    First note that if $U\in \mathscr N_x$, then $Uf(x)=f(Ux)=f(x)$ and so $f(x)\in W$.  Clearly $\psi$ is $K$-linear.  To see that it is $G_x$-equivariant, let $g\in G_x$ and choose $U\in \mathscr G^a_x$ with $g\in U$.  Then observe, using Proposition~\ref{bimodule}, that $\psi(gf)= (gf)(x) = f(xg) =f(g)= f(Ux) = Uf(x) = gf(x) = g\psi(f)$.  This shows that $\psi$ is a $KG_x$-morphism.  If $\psi(f)=0$, then $f(x)=0$ and so $f(KL_x)=0$ by Corollary~\ref{cyclicmodule}.  Thus $\psi$ is injective.  To see that $\psi$ is surjective, let $w\in W$ and define $f\colon KL_x\to V$ by $f(t) = tw$, for $t\in L_x$, where $tw$ is as defined after Lemma~\ref{welldefineactionofL_x}.  Then $f(x)=xw=w$.  It thus remains to show that $f$ is a $K\mathscr G$-morphism. To achieve this, it suffices to show that $f(Ut)=Utw$ for $U\in \mathscr G^a$.  Choose $U'\in \til L_x$ with $t\in U'$; so $Utw = UU'w$ by definition.  If $r(t)\notin U\inv U$, then $Ut=0$ (by Proposition~\ref{bimodule}) and $UU'\notin \til L_x$.  Thus $f(Ut)=0$, whereas $Utw=UU'w =0$ by Lemma~\ref{welldefineactionofL_x}.  On the other hand, if $r(t)\in U\inv U$ and say $d(s)=r(t)$ with $s\in U$, then $Ut=st$ and $st\in UU'\in \til L_x$.  Thus $f(Ut) = f(st)=(st)w$, whereas $Utw=UU'w = (st)w$.  This completes the proof that $f$ is a $K\mathscr G$-morphism and hence $\psi$ is onto.  It is clear that $\psi$ is natural.
\end{proof}

It turns out that $\Res_x\Ind_x$ is naturally isomorphic to the identity functor.

\begin{Prop}\label{composetoidentity}
Let $V$ be a $KG_x$-module.  Then $\Res_x\Ind_x(V)=x\otimes V$ is naturally isomorphic to $V$ as a $KG_x$-module.
\end{Prop}
\begin{proof}
Let $T$ be a transversal for $L_x/G_x$ with $x\in T$.   Because $T$ is a $KG_x$-basis for $KL_x$, it follows that $KL_x\otimes_{KG_x}V = \bigoplus_{t\in T}(t\otimes V)$.    We claim that $\Res_x\Ind_x(V)=x\otimes V$.  Indeed, if $U\in \mathscr N_x$, then $U(x\otimes v) = Ux\otimes v =x\otimes v$.  Conversely, suppose $w=t_1\otimes v_1+\cdots+t_n\otimes v_n$ belongs to $\Res_x\Ind_x(V)$.  Choose $U\in B(\mathscr G^0)$ so that $x\in U$ and $U\cap \{r(t_1),\cdots,r(t_n)\} \subseteq \{x\}$.  Then, by Proposition~\ref{bimodule}, we have $w=Uw\in x\otimes V$, establishing the desired equality.

Now $x\otimes V$ is naturally isomorphic to $V$ as a $KG_x$-module via the map $x\otimes v\mapsto v$  since if $g\in G$ and $U\in \mathscr G^a_x$ with $g\in U$, then $g(x\otimes v)=U(x\otimes v) = Ux\otimes v = g\otimes v = x\otimes gv$.
\end{proof}

A useful fact is that the induction functor is exact.  In general, $\Res_x$ is left exact but it need not be right exact.

\begin{Prop}
The functor $\Ind_x$ is exact, whereas $\Res_x$ is left exact.
\end{Prop}
\begin{proof}
Since $KL_x$ is a free $KG_x$-module, it is flat and hence $\Ind_x$ is exact.  Clearly $\Res_x = \mathrm{Hom}_{K\mathscr G}(KL_x,-)$ is left exact.
\end{proof}

Our next goal is to show that if $V$ is a simple $KG_x$-module, then the $K\mathscr G$-module $\Ind_x(V)$ is simple with a certain ``finiteness'' property, namely it is not annihilated by $\Res_x$.  Afterwards, we shall prove that all simple $K\mathscr G$-modules with this ``finiteness'' property are induced modules; this class of simple $K\mathscr G$-modules contains all the finite dimensional ones when $K$ is a field.  This is exactly what is done for inverse semigroups with finitely many idempotents in~\cite{mobius1,mobius2}.  Here the proof becomes more technical because the algebra need not be unital.  Also topology is used instead of finiteness arguments in the proof. The main idea is in essence that of~\cite{myirreps}: to exploit the adjunct relationship between induction and restriction.

The following definition will play a key role in constructing the finite dimensional irreducible representations of an inverse semigroup.

\begin{Def}[Finite index]
Let us say that an object $x\in \mathscr G^0$ has \emph{finite index} if its orbit is finite.
\end{Def}

\begin{Prop}\label{constructirred}
Let $x\in \mathscr G^0$ and suppose that $V$ is a simple $KG_x$-module.  Then $\Ind_x(V)$ is a simple $K\mathscr G$-module.
Moreover, if $K$ is a field, then $\Ind_x(V)$ is finite dimensional if and only if $x$ has finite index and $V$ is finite dimensional.  Finally, if $V$ and $W$ are non-isomorphic $KG_x$-modules, then $\Ind_x(V)\ncong \Ind_x(W)$.
\end{Prop}
\begin{proof}
We retain the notation above.   Let $T$ be a transversal for $L_x/G_x$ with $x\in T$.   Since $L_x/G_x$ is in bijection with the orbit $\mathscr O$ of $x$, the set $T$ is finite if and only if $x$ has finite index.  Because $T$ is a $KG_x$-basis for $KL_x$, it follows that $KL_x\otimes_{KG_x}V = \bigoplus_{t\in T}(t\otimes V)$.  In particular, $\Ind_x(V)$ is finite dimensional when $K$ is a field if and only if $T$ is finite and $V$ is finite dimensional.  This establishes the second statement.  We turn now to the proof of simplicity.

Suppose that $0\neq W$ is a $K\mathscr G$-submodule.  Then $\Res_x(W)$ is a $KG_x$-submodule of $\Res_x\Ind_x(V)\cong V$.  We claim that it is non-zero. Let $0\neq w\in W$.  Then $w=t_1\otimes v_1+\cdots +t_n\otimes v_n$ for some $v_1,\ldots, v_n\in V$ and $t_1,\ldots, t_n\in T$.  Moreover, $v_j\neq 0$ for some $j$.  By Proposition~\ref{createprojections}, we can find $U\in \mathscr G^a$ so that $\chi_Ut_j=x$ and $\chi_Ut_i=0$ for $i\neq j$.  Then $\chi_Uw=x\otimes v_j\neq 0$  belongs to $\Res_x(W)$.  Simplicity of $V$ now yields $\Res_x\Ind_x(V)=\Res_x(W)\subseteq W$.  Corollary~\ref{cyclicmodule} then yields \[\Ind_x(V)=K\mathscr G\cdot (x\otimes V) = K\mathscr G\cdot \Res_x\Ind_x(V)\subseteq K\mathscr G\cdot W\subseteq W,\] establishing the simplicity of $\Ind_x(V)$.

The final statement follows because $\Res_x\Ind_x$ is naturally equivalent to the identity functor.
\end{proof}

Next we wish to show that modules of the above sort obtained from distinct orbits are non-isomorphic.

\begin{Prop}\label{orbitunique}
Suppose that $x,y$ are elements in distinct orbits.  Then induced modules of the form $\Ind_x(V)$ and $\Ind_y(W)$ are not isomorphic.
\end{Prop}
\begin{proof}
Put $M=\Ind_x(V)$ and $N=\Ind_y(W)$. Proposition~\ref{composetoidentity} yields $\Res_x(M)\cong V\neq 0$. On the other hand, if $w=t_1\otimes v_1+\cdots+t_n\otimes v_n\in M$ is non-zero, then since $y\notin \{r(t_1),\ldots, r(t_n)\}$, we can find $U\in B(\mathscr G^0)$ so that $y\in U$ and $r(t_i)\notin U$, for $i=1,\ldots, n$.  Then $Uw=0$ by Proposition~\ref{bimodule}.  Thus we have $\Res_y(M)= 0$. Applying a symmetric argument to $N$ shows that $M\ncong N$.
\end{proof}

To obtain the converse of Proposition~\ref{constructirred}, we shall use Stone duality.   Also the inverse semigroup $\mathscr G^a$ will play a starring role since each $K\mathscr G$-module gives a representation of $\mathscr G^a$.  What we are essentially doing is imitating the theory for finite inverse semigroups~\cite{CP}, as interpreted through~\cite{mobius1,mobius2}, for ample groupoids; see also~\cite{myirreps}.   The type of simple modules which can be described as induced modules are what we shall term spectral modules.

%

\begin{Def}[Spectral module]
Let $V$ be a non-zero $K\mathscr G$-module.  We say that $V$ is a \emph{spectral module} if there is a point $x\in \mathscr G^0$ so that $\Res_x(V)\neq 0$.
\end{Def}

\begin{Rmk}
It is easy to verify that $\Res_x(K\mathscr G)\neq 0$ if and only if $x$ is an isolated point.  On the other hand, the cyclic module $KL_x$ satisfies $\Res_x(KL_x)=Kx$.  This shows that in general $\Res_x$ is not exact.  However, if $x$ is an isolated point of $\mathscr G^0$, then $\Res_x(V) = \delta_xV$ and so the restriction functor is exact in this case.
\end{Rmk}

Every induced module from a non-zero module is spectral due to the isomorphism $\Res_x\Ind_x(V)\cong V$.
Let us show that the spectral assumption is not too strong a condition.  In particular, we will establish that all finite dimensional modules over a field are spectral.    Recall that if $A$ is a commutative $K$-algebra, then the idempotent set $E(A)$ of $A$ is a generalized boolean algebra with respect to the natural partial order.  The join of $e,f$ is given by $e\vee f=e+f-ef$ and the relative complement by $e\setminus f=e-ef$.

\begin{Prop}
Let $V$ be a $K\mathscr G$-module with associated representation $\p\colon K\mathscr G\to \mathrm{End}_K(V)$.  Let $\alpha\colon \mathscr G^a\to \mathrm{End}_K(V)$ be the representation given by $U\mapsto \p(\chi_U)$.  Assume that $B=\alpha(B(\mathscr G^0))$ contains a primitive idempotent.  Then $V$ is spectral.  This occurs in particular if $B$ is finite or more generally satisfies the descending chain condition.
\end{Prop}
\begin{proof}
Let $A$ be the subalgebra spanned by $\alpha(B(\mathscr G^0))$; so $A=\p(K\mathscr G^0)$.  Then $\alpha\colon B(\mathscr G^0)\to E(A)$ is a morphism of generalized boolean algebras.  Indeed, we compute $\alpha(U\cup V) = \p(\chi_{U\cup V}) = \p(\chi_U)+\p(\chi_V)-\p(\chi_{U\cap V}) = \alpha(U)+\alpha(V)-\alpha(U)\alpha(V)=\alpha(U)\vee \alpha(V)$.  Thus $B$ is a generalized boolean algebra.  Stone duality provides a proper continuous map $\wh\alpha\colon \mathrm{Spec}(B)\to \mathscr G^0$.  So now let $e$ be a primitive idempotent of $B$.  Then $e^{\uparrow}$ is an ultrafilter on $B$ and $\chi_{e^{\uparrow}}\in \mathrm{Spec}(B)$ by Proposition~\ref{ultrafilterchar}.  Let $x=\wh{\alpha}(\chi_{e^{\uparrow}})$.  Then \[\Res_x(V) = \bigcap_{f\in e^{\uparrow}} fV= eV\neq 0\] completing the proof.
\end{proof}

The above proposition will be used to show that every finite dimensional representation over a field is spectral.
Denote by $M_n(K)$ the algebra of $n\times n$-matrices over $K$.  The following lemma is classical linear algebra.
\begin{Lemma}
Let $K$ be a field and $F\leq M_n(K)$ a semilattice.  Then we have $|F|\leq 2^n$.
\end{Lemma}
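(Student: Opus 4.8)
The plan is to unwind the hypothesis: a semilattice is by definition a commutative semigroup of idempotents, so $F$ consists of pairwise commuting idempotent matrices in $M_n(K)$. The governing observation is that such a family can be simultaneously diagonalized, after which the bound becomes a triviality. First I would note that each $e \in F$ satisfies $e^2 = e$, so its minimal polynomial divides $t(t-1)$; since $0 \neq 1$ in the field $K$, this polynomial has distinct roots, and hence every element of $F$ is diagonalizable over $K$ with eigenvalues in $\{0,1\}$.

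The main step is then to upgrade individual diagonalizability to simultaneous diagonalizability. Because the elements of $F$ commute and are each diagonalizable, they admit a common eigenbasis: there is a single basis of $K^n$ with respect to which every $e \in F$ is represented by a diagonal matrix with entries in $\{0,1\}$. For a finite $F$ this is the standard fact that commuting diagonalizable operators preserve one another's eigenspaces and can be refined to a common decomposition; for an infinite $F$ one reduces to the finite case by applying it to a finite spanning set of the finite-dimensional commutative subalgebra of $M_n(K)$ generated by $F$, or simply invokes the general result that any commuting family of diagonalizable operators has a simultaneous eigenbasis. This is the one point that genuinely requires the commutativity built into the semilattice axioms, and it is where I expect the only real content to lie.

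Having fixed such a basis, I would assign to each $e \in F$ the support $S_e \subseteq \{1,\ldots,n\}$ of its diagonal, namely the set of basis vectors fixed by $e$. This assignment is injective, since two matrices that are diagonal in the same basis and share the same diagonal are equal. Thus $F$ embeds as a set into the power set of $\{1,\ldots,n\}$, yielding $|F| \leq 2^n$ at once. I would remark in passing that $e \mapsto S_e$ is in fact a semilattice embedding into the Boolean algebra $2^{\{1,\ldots,n\}}$, since the product $ef$ of two such diagonal idempotents has support $S_e \cap S_f$, although only the cardinality estimate is needed for the lemma.
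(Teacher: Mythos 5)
Your proof is correct and follows essentially the same route as the paper's: minimal polynomial dividing $t(t-1)$ gives diagonalizability, commutativity upgrades this to simultaneous diagonalizability, and then $F$ embeds into $\{0,1\}^n$. Your treatment is somewhat more careful than the paper's sketch (notably in handling an infinite family $F$ and in spelling out the injectivity of the support map), but the underlying argument is identical.
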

\begin{proof}
We just sketch the argument.  If $e\in F$, then $e^2=e$ and so the minimal polynomial of $e$ divides $x(x-1)$.  Thus $e$ is diagonalizable.  But commutative semigroups of diagonalizable matrices are easily seen to be simultaneously diagonalizable, so $F\leq K^n$.  But the idempotent set of $K$ is $\{0,1\}$, so $F\leq \{0,1\}^n$ and hence $|F|\leq 2^n$.
\end{proof}

\begin{Cor}\label{finiteidempotents}
Let $\p\colon K\mathscr G\to M_n(K)$ be a finite dimensional representation over a field $K$.  Then $\alpha(B(\mathscr G^0))$ is finite where $\alpha\colon B(\mathscr G^0)\to M_n(K)$ is given by $\alpha(U) = \p(\chi_U)$.  Consequently, every finite  dimensional (non-zero) $K\mathscr G$-module is spectral.
\end{Cor}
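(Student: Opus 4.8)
The plan is to recognize $\alpha(B(\mathscr G^0))$ as a commutative semigroup of idempotent matrices and then to invoke the two preceding results directly. Recall that a finite dimensional nonzero $K\mathscr G$-module $V$ of dimension $n$ is the same datum as a representation $\p\colon K\mathscr G\to \End{K}{V}\cong M_n(K)$. First I would check that $\alpha\colon B(\mathscr G^0)\to M_n(K)$, $\alpha(U)=\p(\chi_U)$, takes values in a semilattice. Indeed, for $U,V\in B(\mathscr G^0)$ the product of two unit slices is their intersection, so $\chi_U\ast\chi_V=\chi_{U\cap V}$ by Proposition~\ref{convolutionwelldefined}(3), and applying $\p$ gives $\alpha(U)\alpha(V)=\p(\chi_{U\cap V})=\alpha(U\cap V)=\alpha(V)\alpha(U)$. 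Taking $U=V$ shows each $\alpha(U)$ is idempotent, and the displayed identity shows these idempotents commute. Hence $F=\alpha(B(\mathscr G^0))$ is a semilattice contained in $M_n(K)$.

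With this in hand the first assertion is immediate from the Lemma preceding the corollary: since $K$ is a field and $F\leq M_n(K)$ is a semilattice, one has $|F|\leq 2^n$, so $\alpha(B(\mathscr G^0))$ is finite.

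For the consequence I would appeal to the preceding Proposition, which guarantees that $V$ is spectral as soon as $B=\alpha(B(\mathscr G^0))$ contains a primitive idempotent, and in particular when $B$ is finite. The only point needing care is that $B\neq\{0\}$ when $V\neq 0$, and this is precisely where nondegeneracy enters. If $\alpha$ were identically zero, then $\p(\chi_{U\inv U})=0$ for every slice $U\in\mathscr G^a$, and since $U(U\inv U)=U$ we would get $\p(\chi_U)=\p(\chi_U\ast\chi_{U\inv U})=\p(\chi_U)\,\alpha(U\inv U)=0$; because the $\chi_U$ span $K\mathscr G$ by Proposition~\ref{characteristicbasis}, this forces $\p=0$ and hence $K\mathscr G\cdot V=0$, contradicting nondegeneracy of the nonzero module $V$. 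Therefore the finite semilattice $B$ has a minimal nonzero element, i.e.\ a primitive idempotent, and the Proposition yields that $V$ is spectral. I do not expect a genuine obstacle here: the whole argument rests on the single structural observation that $\alpha$ carries the generalized boolean algebra $B(\mathscr G^0)$ into a semilattice of matrices, to which the cardinality bound $2^n$ of the Lemma applies; the remainder is the bookkeeping of the nondegeneracy reduction above.
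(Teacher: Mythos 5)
Your proposal is correct and follows exactly the route the paper intends for this corollary (which it leaves as immediate): the image of $\alpha$ is a commuting semigroup of idempotents, hence a semilattice in $M_n(K)$, hence finite by the preceding Lemma, and the preceding Proposition then yields spectrality. Your extra verification that $\alpha$ is not identically zero when $V\neq 0$ is nondegenerate (so that the finite semilattice $B$ really contains a primitive idempotent) is a point the paper glosses over, and you resolve it correctly via $\chi_U=\chi_U\ast\chi_{U\inv U}$ and the spanning result of Proposition~\ref{characteristicbasis}.
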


%

Now we establish the main theorem of this section.

\begin{Thm}\label{describeirreps}
Let $\mathscr G$ be an ample groupoid and fix $D\subseteq \mathscr G^0$ containing exactly one element from each orbit.  Then there is a bijection between spectral simple $K\mathscr G$-modules and pairs $(x,V)$ where $x\in D$ and $V$ is a simple $KG_x$-module (taken up to isomorphism).  The corresponding simple $K\mathscr G$-module is $\Ind_x(V)$.  When $K$ is a field, the finite dimensional simple $K\mathscr G$-modules correspond to those pairs $(x,V)$ where $x$ is of finite index and $V$ is a finite dimensional simple $KG_x$-module.
\end{Thm}
\begin{proof}
Proposition~\ref{constructirred} and Proposition~\ref{orbitunique} yield that the modules described in the theorem statement form a set of pairwise non-isomorphic spectral simple $K\mathscr G$-modules.  It remains to show that all spectral simple $K\mathscr G$-modules are of this form.  So let $V$ be a spectral simple $K\mathscr G$-module and suppose $\Res_x(V)\neq 0$.  Then $\Res_x(V)$ is a simple $KG_x$-module by Lemma~\ref{issimple}.  By the adjunction between induction and restriction, the identity map on $\Res_x(V)$ gives rise to a non-zero $K\mathscr G$-morphism $\psi\colon \Ind_x\Res_x(V)\to V$.  Since $\Ind_x\Res_x(V)$ is simple by Proposition~\ref{constructirred} and $V$ is simple by hypothesis, it follows that $\psi$ is an isomorphism by Schur's Lemma.  This completes the proof of the first statement since the induced modules depend only on the orbit up to isomorphism.  The statement about finite dimensional simple modules is a consequence of Proposition~\ref{constructirred} and Corollary~\ref{finiteidempotents}.
\end{proof}

\subsection{Irreducible representations of inverse semigroups}
Fix now an inverse semigroup $S$ and let $\mathscr G(S)$ be the universal groupoid of $S$.   If $\p\in \wh {E(S)}$ has finite index in $\mathscr G(S)$, then we shall call $\p$ a \emph{finite index character} of $E(S)$ in $S$. This notion of index really depends on $S$.  Notice that the orbit of $\p$ in $\mathscr G(S)$ is precisely the orbit of $\p$ under the spectral action of $S$ on $\wh {E(S)}$.  If $E(S)$ is finite, then of course $\wh {E(S)}=E(S)$ and all characters have finite index.  If $\p\in \wh{E(S)}$, then $S_{\p} = \{s\in S\mid s\p =\p\}$ is an inverse subsemigroup of $S$ and one easily checks that the isotropy group $G_{\p}$ of $\p$ in $\mathscr G(S)$ is precisely the maximal group image of $S_{\p}$ since if $s,s'\in S_{\p}$ and $t\leq s,s'$ with $\p\in D(t^*t)$ and $t\in S$, then $t\in S_{\p}$.  This allows us to describe the finite dimensional irreducible representations of an inverse semigroup without any explicit reference to $\mathscr G(S)$.  So without further ado, we state the classification theorem for finite dimensional irreducible representations of inverse semigroups, thereby generalizing the classical results for inverse semigroups with finitely many idempotents~\cite{CP,mobius1,mobius2,oknisemigroupalgebra}.

\begin{Thm}\label{inversedescribereps}
Let $S$ be an inverse semigroup and $K$ a field.  Fix a set $D\subseteq \wh{E(S)}$ containing exactly one finite index character from each orbit of finite index characters under the spectral action of $S$ on $\wh{E(S)}$.  Let $S_{\p}$ be the stabilizer of $\p$ and set $G_{\p}$ equal to the maximal group image of $S_{\p}$.    Then there is a bijection between finite dimensional simple $KS$-modules and pairs $(\p,V)$ where $\p\in D$ and $V$ is a finite dimensional simple $KG_{\p}$-module (considered up to isomorphism).
\end{Thm}
\begin{proof}
This is immediate from Theorem~\ref{describeirreps} and the above discussion.
\end{proof}

\begin{Rmk}
That there should be a theorem of this flavor was first suggested in unpublished joint work of S.~Haatja, S.~W.~Margolis and the author from 2002.
\end{Rmk}

Let us draw some consequences. First we give necessary and sufficient conditions for an inverse semigroup to have enough finite dimensional irreducible representations to separate points.  Then we provide examples showing that the statement cannot really be simplified.

\begin{Cor}\label{seppoints}
An inverse semigroup $S$ has enough finite dimensional irreducible representations over $K$ to separate points if and only if:
\begin{enumerate}
\item The characters of $E(S)$ of finite index in $S$ separate points of $E(S)$;
\item For each $e\in E(S)$ and each $e\neq s\in S$ so that $s^*s=e=ss^*$, there is a character $\p$ of finite index in $S$ so that $\p(e)=1$ and either:
\begin{enumerate}
\item $s\p\neq \p$; or
\item $s\p =\p$ and there is a finite dimensional irreducible representation $\psi$ of $G_{\p}$ so that $\psi([s,\p])\neq 1$.
\end{enumerate}
\end{enumerate}
\end{Cor}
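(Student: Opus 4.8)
The plan is to rephrase the separation property in terms of the congruence $\approx$ on $S$ defined by $s\approx t$ precisely when $\rho(s)=\rho(t)$ for every finite dimensional simple $KS$-module; by Theorem~\ref{inversedescribereps} these are exactly the modules $\Ind_{\p}(V)$ with $\p$ a finite index character and $V$ a finite dimensional simple $KG_{\p}$-module. Since each representation is an algebra homomorphism, $\approx$ is a congruence, and a congruence on an inverse semigroup is compatible with the involution~\cite{Lawson}; thus the finite dimensional irreducible representations separate points if and only if $\approx$ is the equality relation. Throughout I identify $KS$ with $K\mathscr G(S)$ and let $s$ act by $\Theta(s)=\chi_{(s,D(s^*s))}$.

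The technical core, which I regard as the main obstacle, is a clean formula for the Sch\"utzenberger action on the cyclic vector of an induced module. Using Proposition~\ref{bimodule} and the dense embedding of the underlying groupoid from Lemma~\ref{embedunderlying}, I would show that for the base point $\p\in\wh{E(S)}$ of $\Ind_{\p}(V)=KL_{\p}\otimes_{KG_{\p}}V$ and $v\in V$,
\[ s\cdot(\p\otimes v)=\begin{cases} [s,\p]\otimes v & \text{if } \p(s^*s)=1,\\ 0 & \text{otherwise,}\end{cases}\]
and that when $s\p=\p$, so that $[s,\p]\in G_{\p}$, this equals $\p\otimes[s,\p]v$. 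The delicate point is identifying the action of $s$ on a general summand $t\otimes V$ of range $\psi=r(t)$ with the group element $[s,\psi]\in G_{\psi}$ acting on the conjugated module, which requires the change-of-basepoint isomorphism $\Ind_{\p}(V)\cong\Ind_{\psi}(V')$ recorded after the definition of induction. I would also note that an idempotent $e$ acts on $\Ind_{\p}(V)=\bigoplus_{t\in T}t\otimes V$ as the projection onto those summands with $r(t)\in D(e)$.

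For sufficiency I assume (1) and (2), take $s\approx t$, and aim to prove $s=t$. Feeding the trivial $G_{\p}$-module into the displayed formula (so that $\Ind_{\p}(K)$ is a nonzero finite dimensional simple module for every finite index $\p$ by Proposition~\ref{constructirred}), $s\approx t$ forces $\p(s^*s)=\p(t^*t)$ for all finite index $\p$; applying the same to $s^*\approx t^*$ gives $\p(ss^*)=\p(tt^*)$, whence $s^*s=t^*t=:e$ and $ss^*=tt^*$ by condition (1). Setting $g=t^*s$, a short inverse-semigroup computation gives $g^*g=e=gg^*$, so $g\in G_e$, while $\rho(g)=\rho(t^*)\rho(s)=\rho(t^*)\rho(t)=\rho(e)$ shows $g\approx e$; moreover $g=e$ forces $s=t$ (left-multiply $t^*s=t^*t$ by $t$). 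If $g\neq e$, condition (2) furnishes a finite index $\p$ with $\p(e)=1$ for which (a) or (b) holds, and the displayed formula turns this into a representation with $\rho(g)\neq\rho(e)$ — in case (a) because $[g,\p]\otimes v$ and $\p\otimes v$ lie in distinct summands, in case (b) because $[g,\p]$ acts nontrivially on the chosen module — contradicting $g\approx e$. Hence $g=e$ and $s=t$.

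For necessity I assume the representations separate points. Given $e\neq f$ in $E(S)$, a separating module $\Ind_{\p}(V)$ has $e$ and $f$ acting as distinct projections, so some summand has range $\psi$ with $\psi(e)\neq\psi(f)$; as $\psi$ lies in the finite orbit of $\p$ it is itself of finite index, yielding (1). Given $e\neq s\in S$ with $s^*s=e=ss^*$, a separating module $\Ind_{\p}(V)$ has both $\Theta(s)$ and $\Theta(e)$ vanishing off $P=\bigoplus_{r(t)\in D(e)}t\otimes V$ with $\Theta(e)$ equal to the identity on $P$, so some summand $t_0\otimes V$ of range $\psi\in D(e)$ has $s$ acting nontrivially. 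If $s\psi\neq\psi$ this is alternative (a); if $s\psi=\psi$ then, after the change-of-basepoint isomorphism, $[s,\psi]$ acts nontrivially on the finite dimensional simple $G_{\psi}$-module corresponding to $V$, which is precisely alternative (b). This establishes (2) and completes the proof.
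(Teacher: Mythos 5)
Your proof is correct and follows essentially the same route as the paper's: necessity by writing a separating finite dimensional simple module as $\Ind_{\p}(V)=\bigoplus_{t\in T}t\otimes V$, using the projection formula for idempotents and conjugating to the base point $\psi=r(t_0)$ to produce either alternative (a) or (b); sufficiency via the trivial representations attached to finite orbits for condition (1) and the reduction to the group element $g=t^*s\in G_e$ handled by condition (2) through Proposition~\ref{bimodule}. The only cosmetic difference is your packaging of separation as a congruence $\approx$, whose automatic compatibility with the involution replaces the paper's terse ``dual argument'' for the case $ss^*\neq tt^*$; the substance of the two proofs is identical.
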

\begin{proof}
Suppose first that $S$ has enough finite dimensional irreducible representations to separate points and that $e\neq f$ are idempotents of $S$. Choose a finite dimensional simple $KS$-module $W=KL_{\p}\otimes_{KG_{\p}} V$ with $\p$ a finite index character and such that $e$ and $f$ act differently on $W$.  Recalling that $x\mapsto \chi_{D(x)}$ for $x\in E(S)$ under the isomorphism $KS\to K\mathscr G$, it follows from Proposition~\ref{bimodule} that, for $t\in L_{\p}$,
\begin{equation}\label{idempotentaction}
xt = \begin{cases} t & r(t)(x)=1\\
0 & r(t)(x)=0.\end{cases}
\end{equation}
Therefore, in order for $e$ and $f$ to act differently on $W$, there must exist $t\in L_{\p}$ with $r(t)=\rho$ a finite index character such that $\rho(e)\neq \rho(f)$.

Next suppose that $e\neq s$ and $s^*s=e=ss^*$.  By assumption there is a finite dimensional simple $KS$-module $W=KL_{\p}\otimes_{KG_{\p}} V$, with $\p$ a finite index character, where $s$ and $e$ act differently. By \eqref{idempotentaction} there must exist $t\in L_{\p}$ and $v\in V$ so that $\rho=r(t)$ satisfies $\rho\in D(e)$ and $s(t\otimes v)\neq e(t\otimes v) = t\otimes v$.  Since $\rho$ has finite index, if $s\rho\neq \rho$ then we are done.  So assume $s\rho=\rho$.

Recall that under the isomorphism of algebras $KS\to K\mathscr G(S)$, we have that $s\mapsto \chi_{(s,D(e))}$. Since $et\neq 0$ implies $st\neq 0$ (as $s^*s=e$), there must exist $y\in L_{\p}$ so that $yt\inv\in (s,D(e))$ and moreover $st = y$ in $KL_{\p}$ by Proposition~\ref{bimodule}.  We must then have $yt\inv = [s,\rho]\in G_{\rho}$ as $s\rho=\rho$.  Now $st = y=t(t\inv y) =t(t\inv [s,\rho]t)$ and so $t\otimes v\neq s(t\otimes v) = t\otimes t\inv [s,\rho]tv$.  Thus if we make $V$ a (simple) $KG_{\rho}$-module via $gv = (t\inv gt)v$, then $[s,\rho]$ does not act as the identity on this module.  This completes the proof of necessity.

Let us now proceed with sufficiency.  First we make an observation.  Let $\p$ be a character of finite index with associated finite orbit $\mathscr O$.  Let $V$ be the trivial $KG_{\p}$-module.  It is routine to verify using Proposition~\ref{bimodule} that $KL_{\p}\otimes _{KG_{\p}} V$ has a basis in bijection with $\mathscr O$ and $S$ acts on the basis by restricting the action of $S$ on $\wh{E(S)}$ to $\mathscr O$.  We call this the \emph{trivial representation associated to $\mathscr O$}.

Suppose $s,t\in S$ with $s\neq t$.  Assume first that $s^*s\neq t^*t$ and let $\p$ be a finite index character with $\p(s^*s)\neq \p(t^*t)$.   Then in the trivial representation associated to the orbit of $\p$, exactly one of $s$ and $t$ is defined on $\p$ and so this finite dimensional irreducible representation separates $s$ and $t$.  A dual argument works if $ss^*\neq tt^*$.

So let us now assume that $s^*s=t^*t$ and $ss^*=tt^*$.  Then it suffices to separate $s^*s$ from $t^*s$ in order to separate $s$ and $t$.  So we are left with the case that $s^*s=e=ss^*$ and $s\neq e$.  We have two cases.  Suppose first we can find a finite index character $\p$ with $s\p\neq \p$.  Again, the trivial representation associated to the orbit of $\p$ separates $s$ and $e$.

Suppose now that there is a finite index character $\p$ with $\p(e)=1$ and $s\p=\p$ and a finite dimensional simple $KG_{\p}$-module $V$ so that $[s,\p]$ acts non-trivially on $V$.   It is then easy to see  using Proposition~\ref{bimodule} that $s(\p\otimes v) = [s,\p]\otimes v=\p\otimes [s,\p]v$ since $[s,\p]\in (s,D(e))$.  Thus $s$ acts non-trivially on $KL_{\p}\otimes _{KG_{\p}}V$, completing the proof.
\end{proof}

An immediate consequence of this corollary is the following folklore result.
\begin{Cor}
Let $S$ be an inverse semigroup with finitely many idempotents and $K$ a field.  Then there are enough finite dimensional irreducible representations of $S$ over $K$ to separate points if and only if each maximal subgroup of $S$ has enough finite dimensional irreducible representations to separate points.
\end{Cor}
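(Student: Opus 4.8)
The plan is to deduce this from Corollary~\ref{seppoints} after specializing its two conditions to the case $E=E(S)$ finite. First I would record the simplifications that finiteness forces. Since $E$ is finite, every filter is principal and $\wh E=E$ is discrete (Proposition~\ref{descendingchain}), so every character is of the form $\chi_{e^{\uparrow}}$, and \emph{every} character automatically has finite index because its orbit is a set of idempotents, hence finite. Moreover the orbit of $\chi_{e^{\uparrow}}$ is $\{\chi_{f^{\uparrow}}\mid f\D e\}$ and the isotropy group $G_{\chi_{e^{\uparrow}}}$ is the maximal subgroup $G_e$, with the germ $[s,\chi_{e^{\uparrow}}]$ corresponding to $s$ whenever $s\in G_e$ (Lemma~\ref{embedunderlying} and the discussion preceding Theorem~\ref{inversedescribereps}). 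Condition (1) of Corollary~\ref{seppoints} is then automatic: if $e\neq f$ then, say, $e\nleq f$, and $\chi_{e^{\uparrow}}(e)=1\neq 0=\chi_{e^{\uparrow}}(f)$, so the finite index principal characters already separate $E(S)$.

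For sufficiency I would verify condition (2) of Corollary~\ref{seppoints} using the single character $\chi_{e^{\uparrow}}$. Given $e$ and $e\neq s$ with $s^*s=e=ss^*$ (a non-identity element of $G_e$), take $\p=\chi_{e^{\uparrow}}$, a finite index character with $\p(e)=1$. Since $ses^*=s(s^*s)s^*=e$, the character $\chi_{e^{\uparrow}}$ is fixed by $s$, so alternative (a) is unavailable and I must use (b); but $[s,\chi_{e^{\uparrow}}]$ is exactly the element $s\neq e$ of $G_{\chi_{e^{\uparrow}}}=G_e$, so the hypothesis that $G_e$ is separated by its finite dimensional irreducibles supplies a $\psi$ with $\psi([s,\p])\neq 1$. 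Thus (2) holds and Corollary~\ref{seppoints} yields that $S$ is separated. This direction is clean.

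The real work, and the step I expect to be the main obstacle, is necessity: assuming $S$ is separated I must manufacture, for each $e$ and each $e\neq s\in G_e$, a finite dimensional irreducible representation of $G_e$ \emph{itself} that is non-trivial on $s$. Corollary~\ref{seppoints}(2) does hand me a finite index character $\p=\chi_{f^{\uparrow}}$ with $f\le e$ and alternative (a) or (b), but nothing forces $f=e$; when $f<e$ the witness produced lives on $G_f$ (case (b)) or on a permutation module attached to the orbit of $\chi_{f^{\uparrow}}$ (case (a)), not on $G_e$. The plan for transferring it up is to exploit that the conjugation action of $G_e$ on the finite set $\{k\in E\mid k\le e\}$ has finite orbits: in case (a), where $s$ moves $f$, the relevant data factors through a \emph{finite} permutation quotient $G_e\onto Q$, and in case (b) the stabilizer $\{g\in G_e\mid gfg^*=f\}$ has finite index by orbit--stabilizer and carries the restriction homomorphism $g\mapsto gf$ into $G_f$. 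Pulling back along $G_e\onto Q$, respectively inducing the $G_f$-representation up this finite-index subgroup, produces a finite dimensional representation of $G_e$ that is non-trivial on $s$.

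The crux is the final step, extracting an \emph{irreducible} witness: to pass from ``$s$ acts non-trivially on a finite dimensional $G_e$-module'' to ``$s$ acts non-trivially on some irreducible subquotient'' one needs semisimplicity of the finite quotients and induced modules involved, which is guaranteed only when the characteristic of $K$ is prime to the orders of these finite conjugation images. This is where I expect a genuine hypothesis on $K$ to be unavoidable: over a field of bad characteristic the implication can fail (for instance a cyclic group of order $p$ acting faithfully by conjugation on the idempotents below $e$ is separated \emph{inside} $S$ through an induced permutation module that is simple as an $S$-module, even though that group has only the trivial irreducible representation over $\mathbb{F}_p$). I would therefore organize the necessity argument around orbit--stabilizer finiteness and make the semisimplicity/characteristic requirement explicit precisely at the point where irreducible constituents are isolated.
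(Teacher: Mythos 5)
Your sufficiency argument is correct, and it is the entire substance of the paper's proof: the paper simply declares this corollary an ``immediate consequence'' of Corollary~\ref{seppoints}, and what is genuinely immediate is exactly what you wrote --- for $E(S)$ finite every character is principal of finite index, condition (1) holds automatically, and condition (2)(b) is witnessed by $\p=\chi_{e^\uparrow}$, which $s$ fixes and whose isotropy group is $G_e$ with $[s,\p]$ identified with $s$ via Lemma~\ref{embedunderlying}.

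Your diagnosis of necessity is also correct, and you should trust the counterexample you sketched: the ``only if'' direction is false as stated, so the gap you found is in the paper, not in your argument. Concretely, let $S$ be the symmetric inverse monoid on two letters and $K=\mathbb{F}_2$. By the corollary following Theorem~\ref{mainthm}, $KS\cong K[\mathbb{Z}/2]\times M_2(K)\times K$, and the two-dimensional simple module of the middle factor is the partial-permutation representation: the identity acts as the identity matrix, the transposition as the swap matrix, the four rank-one elements as the four matrix units, and the empty map as $0$. These seven matrices are pairwise distinct, so this single irreducible representation separates the points of $S$, while the maximal subgroup $\mathbb{Z}/2$ at the identity has only the trivial irreducible representation over $\mathbb{F}_2$. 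This is consistent with Corollary~\ref{seppoints}: the transposition is separated from the identity through alternative (a) --- it moves the character $\chi_{f^\uparrow}$ of a rank-one idempotent $f$ --- and alternative (a) produces no representation of $G_e$ whatsoever. That is exactly the escape route you identified, and it is why no amount of orbit--stabilizer machinery can close the necessity direction: simplicity of an induced $KS$-module does not force its restriction to $G_e$ to have separating irreducible constituents.

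One correction to your final paragraph, however: the defect is not purely one of characteristic, so making a semisimplicity hypothesis on $K$ explicit will not rescue necessity at the paper's level of generality (finitely many idempotents but possibly infinite maximal subgroups). Over $K=\mathbb{C}$, let $S=G_e\cup G_f$ be the Clifford semigroup with two central idempotents $e>f$, where $G_e=H(\mathbb{Q})$ is the rational Heisenberg group, $G_f=GL_3(\mathbb{Q})$, and the structure homomorphism is the defining embedding. The standard representation of $GL_3(\mathbb{Q})$ on $\mathbb{C}^3$ is a simple $\mathbb{C}S$-module (irreducible already under $G_f$, since $GL_3(\mathbb{Q})$ spans $M_3(\mathbb{C})$) on which $G_e$ acts faithfully, and the trivial representation of $G_e$ extended by zero separates the two $\D$-classes; hence the finite dimensional irreducibles of $S$ separate points. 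Yet every finite dimensional irreducible representation of $H(\mathbb{Q})$ kills its center: Schur's lemma makes the center act by scalars, those scalars are roots of unity because central elements are commutators, and the divisible group $(\mathbb{Q},+)$ has no nontrivial homomorphism to a finite group. So $G_e$ is not separated even in characteristic zero. The honest conclusion is that the only unconditional statement is the implication you proved cleanly; the two-sided criterion of Corollary~\ref{seppoints} cannot be reduced to a condition on maximal subgroups alone, and the stated equivalence survives only in settings where both sides are automatic, for instance $S$ finite and $\mathrm{char}\,K$ prime to the orders of all maximal subgroups.
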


As a first example, consider the bicyclic inverse monoid, presented by $B=\langle x\mid x^*x=1\rangle$.  Any non-degenerate finite dimensional representation of $B$ must be by invertible matrices since left invertibility implies right invertibility for matrices.  Hence one cannot separate the idempotents of $B$ by finite dimensional irreducible representations of $B$ over any field.  To see this from the point of view of Corollary~\ref{seppoints}, we observe that $\wh{E(B)}$ is the one-point compactification of the natural numbers.  Namely, if $F$ is a filter on $E(B)$, then either it has a minimum element $x^n(x^*)^n$, and hence is a principal filter, or it contains all the idempotents (which is the one-point compactification).  All the principal filters are in a single (infinite) orbit.  The remaining filter is in a singleton orbit with isotropy group $\mathbb Z$.  It obviously separates no idempotents.

Let us next give an example to show that there can be enough finite dimensional irreducible representations of an inverse semigroup to separate points, and yet there can be a finite index character $\p$ so that the isotropy group $G_{\p}$ does not have enough irreducible representations to separate points.  Let $K=\mathbb C$.  Then any finite inverse semigroup has enough finite dimensional irreducible representations to separate points, say by the above corollary.  Hence any residually finite inverse semigroup has enough finite dimensional irreducible representations over $\mathbb C$ to separate points.  On the other hand, the maximal group image $G$ of an inverse semigroup $S$ is the isotropy group of the trivial character that sends all idempotents to $1$, which is a singleton orbit of $\wh{E(S)}$.  Let us construct a residually finite inverse semigroup whose maximal group image does not have any non-trivial finite dimensional representations.

A well-known result of Mal'cev says that a finitely generated group $G$ with a faithful finite dimensional representation over $\mathbb C$ is residually finite.  Since any representation of a simple group is faithful and an infinite simple group is trivially not residually finite, it follows that finitely generated infinite simple groups have no non-trivial finite dimensional representations over $\mathbb C$.  An example of such a group is the famous Thompson's group $V$, which is a finitely presented infinite simple group~\cite{Thompsongroup}.

In summary, if we can find a residually finite inverse semigroup whose maximal group image is a finitely generated infinite simple group, then we will have found the example we are seeking. To construct our example, we make use of the Birget-Rhodes expansion~\cite{BR--exp}.
Let $G$ be any group and let $E$ be the semilattice of finite subsets of $G$ ordered by reverse inclusion (so the meet is union).  Let $G$ act on $E$ by left translation, so  $gX=\{gx\mid x\in X\}$, and form the semidirect product $E\rtimes G$.  Let $S$ be the inverse submonoid of $E\rtimes G$ consisting of all pairs $(X,g)$ so that $1,g\in X$.  This is an $E$-unitary (in fact $F$-inverse) monoid with maximal group image $G$ and identity $(\{1\},1)$.  It is also residually finite.  To see this, we use the well-known fact that an inverse semigroup all of whose $\R$-classes are finite is residually finite (the right Sch\"utzenberger representations on the $\R$-classes separate points).  Hence it suffices to observe that $(X,g)(X,g)^* = (X,1)$ and so the $\R$-class of $(X,g)$ consists of all elements of the form $(X,h)$ with $h\in X$, which is a finite set.

Let us observe that Mal'cev's result immediately implies that a finitely generated group has enough finite dimensional irreducible representations over $\mathbb C$ to separate points if and only if it is residually finite.  One direction here is trivial. For the non-trivial direction, suppose $G$ has enough finite dimensional irreducible representations over $\mathbb C$ to separate points and suppose $g\neq 1$.  Then $G$ has a finite dimensional irreducible representation $\p\colon G\to GL_n(\mathbb C)$ so that $\p(g)\neq 1$.  But $\p(G)$ is a finitely generated linear group and so residually finite by Mal'cev's theorem.  Thus we can find a homomorphism $\psi\colon \p(G)\to H$ with $H$ a finite group and $\psi(\p(g))\neq 1$.  Mal'cev's theorem immediately extends to inverse semigroups.

\begin{Prop}
Let $S$ be a finitely generated inverse subsemigroup of $M_n(\CC)$.  Then $S$ is residually finite.
\end{Prop}
\begin{proof}
Set $V=\mathbb C^n$.  We know that $E(S)$ is finite by Lemma~\ref{finiteidempotents} and hence each maximal subgroup is finitely generated by Corollary~\ref{fgmaxsubgroup}.  It follows that each maximal subgroup is residually finite by Mal'cev's theorem since the maximal subgroup $G_e$ is a faithful group of linear automorphisms of $eV$ for $e\in E(S)$.  But it is well known and easy to prove that if $S$ is an inverse semigroup with finitely many idempotents, then $S$ is residually finite if and only if all its maximal subgroups are residually finite. Indeed, for the non-trivial direction observe that the right Sch\"utzenberger representations of $S$ separate points into partial transformation wreath products of the form $G\wr T$ with $T$ a transitive faithful inverse semigroup of partial permutations of a finite set and $G$ a maximal subgroup of $S$.  But such a wreath product is trivially residually finite when $G$ is residually finite.
\end{proof}

Now the exact same proof as the group case establishes the following result.

\begin{Prop}
Let $S$ be a finitely generated inverse semigroup.  Then $S$ has enough finite dimensional irreducible representations over $\mathbb C$ to separate points if and only if $S$ is residually finite.
\end{Prop}

For the remainder of the section we take $K$ to be a commutative ring with unit.
We now characterize the spectral $KS$-modules in terms of $S$.  In particular, we shall see that if $E(S)$ satisfies the descending chain condition, then all non-zero $KS$-modules are spectral and so we have a complete parameterization of all simple $KS$-modules.

\begin{Prop}\label{whenisspectral}
Let $S$ be an inverse semigroup and let $V$ be a non-zero $KS$-module.  Then $V$ is a spectral $K\mathscr G(S)$-module if and only if there exists $v\in V$ so that $fv= v$ for some idempotent $f\in E(S)$ and, for all $e\in E(S)$, one has $ev\neq 0$ if and only if $ev=v$.  In particular, if $\p\colon S\to \mathrm{End}_K(V)$ is the corresponding representation and $\p(E(S))$ contains a primitive idempotent (for instance, if it satisfies the descending chain condition), then $V$ is spectral.
\end{Prop}
\begin{proof}
Recall that $e\mapsto \chi_{D(e)}$ under the isomorphism of $KS$ with $K\mathscr G(S)$.
Suppose first $V$ is spectral and let $\theta\in \wh{E(S)}$ so that $\Res_{\theta}(V)\neq 0$.  Fix $0\neq v\in \Res_{\theta}(V)$.  If $\theta(f)\neq 0$, then $D(f)\in \mathscr N_{\theta}$ and so $fv=v$.  Suppose that $e\in E(S)$ with $ev\neq 0$.    Then $\{U\in B(\wh {E(S)})\mid Uev=ev\}$ is a proper filter containing $\mathscr N_{\theta}$ and $D(e)$.  Since $\mathscr N_{\theta}$ is an ultrafilter, we conclude $D(e)\in \mathscr N_{\theta}$ and so $ev=D(e)v=v$.

Conversely, suppose there is an element $v\in V$ so that $fv=v$ some $f\in E(S)$ and $ev\neq 0$ if and only if $ev=v$ for all $e\in E(S)$; in particular $v\neq 0$.  Let $A=\p(KE(S))$ where $\p\colon KS\to \mathrm{End}_k(V)$ is the associated representation.  We claim that the set $B$ of elements $e\in E(A)$ so that $ev\neq 0$ implies $ev=v$ is a generalized boolean algebra containing $\p(E(S))$.  It clearly contains $0$.  Suppose $e,f\in B$ and $efv\neq 0$.  Then $ev\neq 0\neq fv$ so $efv=v$.   On the other hand, assume $(e+f-ef)v = (e\vee f)v\neq 0$.  Then at least one of $ev$ or $fv$ is non-zero.  If $ev\neq 0$ and $fv=0$, then we obtain $(e\vee f)v = ev+fv-efv=v$.  A symmetric argument applies if $ev=0$ and $fv\neq 0$.  Finally, if $ev\neq 0\neq fv$, then $(e\vee f)v = ev+fv-efv = v$.  To deal with relative complements, suppose $e,f\in B$ and $(e-ef)v=(e\setminus f)v \neq 0$.  Then $ev\neq 0$ and so $ev=v$.  Therefore, $(e-ef)v=v-fv$.  If $fv\neq 0$, then $fv=v$ and so $(e-ef)v=0$, a contradiction.  Thus $fv=0$ and $(e\setminus f)v=v$.  Since $E(S)$ generates $B(\wh {E(S)})$ as a generalized boolean algebra via the map $e\mapsto D(e)$, it follows that if $U\in B(\wh {E(S)})$ and $Uv\neq 0$, then $Uv=v$.  Let $\mathscr F=\{U\in B(\wh {E(S)})\mid Uv=v\}$.   Clearly,  $\mathscr F$ is a proper filter. We claim that it is an ultrafilter.  Indeed, suppose that $U'\notin \mathscr F$.  Then $U'v=0$ and so $(U\setminus U')v = Uv-UU'v=v$.  Thus $U\setminus U'\in \mathscr F$ and so $\emptyset = U'\cap (U\setminus U')$ shows that the filter generated by $U'$ and $\mathscr F$ is not proper.  Thus $\mathscr F$ is an ultrafilter on $B(\wh {E(S)})$ and hence is of the form $\mathscr N_{\theta}$ for a unique element $\theta\in \wh{E(S)}$.  It follows $v\in \Res_{\theta}(V)$.

For the final statement,  suppose that $\p(f)\in \p(E(S))$ is primitive and $0\neq v\in fV$. Then, for all $e\in E(S)$, $efv=ev\neq 0$ implies $\p(ef)\neq 0$ and so $\p(ef)=\p(f)$ by primitivity.  Thus $ev = efv=fv=v$.
\end{proof}

It turns out that if every idempotent of an inverse semigroup is central, then every simple $KS$-module is spectral.

\begin{Prop}
Let $S$ be an inverse semigroup with central idempotents.  Then every simple $KS$-module is spectral.
\end{Prop}
\begin{proof}
Let $V$ be a simple $KS$-module and suppose $e\in E(S)$.  Since $e$ is central, it follows that $eV$ is $KS$-invariant and hence $eV=V$, and so $e$ acts as the identity, or $eV=0$, whence $e$ acts as $0$.  Thus $V$ is spectral by Proposition~\ref{whenisspectral}
\end{proof}

There are other classes of inverse semigroups all of whose modules are spectral (and hence for which we have a complete list of all simple modules).

\begin{Prop}
Let $S$ be an inverse semigroup such that $E(S)$ is isomorphic to $(\mathbb N,\geq)$.   Then every non-zero $KS$-module is spectral.
\end{Prop}
\begin{proof}
Suppose $E(S)=\{e_i\mid i\in \mathbb N\}$ with $e_ie_j = e_{\max\{i,j\}}$.
Let $V$ be a $KS$-module.  If $eV=V$ for all $e\in E(S)$, then trivially $V$ is spectral.  Otherwise, we can find $n>0$ minimum so that $e_nV\neq V$.  Then $V=e_nV\oplus (1-e_n)V$ and $(1-e_n)V\neq 0$.  Choose a non-zero vector $v$ from $(1-e_n)V$.  We claim \[e_iv=\begin{cases} v & i<n\\ 0 & i\geq n.\end{cases}\]  It will then follow that $V$ is a spectral $KS$-module by Proposition~\ref{whenisspectral}.  Indeed, if $i<n$, then $e_i$ acts as the identity on $V$ by choice of $n$. On the other hand, if $i\geq n$, then $e_i(1-e_n) = e_i-e_ie_n = e_i-e_i=0$.  This completes the proof.
\end{proof}

Putting it all together we obtain the following theorem.

\begin{Thm}\label{inversedescribereps2}
Let $S$ be an inverse semigroup and $K$ a commutative ring with unit.  Fix a set $D\subseteq \wh{E(S)}$ containing exactly one character from each orbit of the spectral action of $S$ on $\wh{E(S)}$.  Let $S_{\p}$ be the stabilizer of $\p$ and set $G_{\p}$ equal to the maximal group image of $S_{\p}$.    Then there is a bijection between simple $KS$-modules $V$ so that there exists $v\in V$ with \[\emptyset\neq \{e\in E(S)\mid ev=v\} = \{e\in E(S)\mid ev\neq 0\}\] and pairs $(\p,W)$ where $\p\in D$ and $W$ is a simple $KG_{\p}$-module (considered up to isomorphism).   This in particular, describes all simple $KS$-modules if the idempotents of $S$ are central or from a descending chain isomorphic to $(\mathbb N,\geq)$.
\end{Thm}

For example, if $B$ is the bicyclic monoid, then the simple $KB$-modules are the simple $K\mathbb Z$-modules and the representation of $B$ on the polynomial ring $K[x]$ by the unilateral shift.
At the moment we do not have an example of an inverse semigroup $S$ and a simple $KS$-module that is not spectral.
By specializing to inverse semigroups with descending chain condition on idempotents, we obtain the following generalization of Munn's results~\cite{CP,oknisemigroupalgebra}.

\begin{Cor}\label{inversedescribereps3}
Let $S$ be an inverse semigroup satisfying descending chain condition on idempotents and let $K$ be a commutative ring with unit. Fix a set $D\subseteq E(S)$ containing exactly one idempotent from each $\D$-class.  Then there is a bijection between simple $KS$-modules and pairs $(e,V)$ where $e\in D$ and $V$ is a simple $KG_e$-module (considered up to isomorphism).  The corresponding $KS$-module is finite dimensional if and only if the $\D$-class of $e$ contains finitely many idempotents and $V$ is finite dimensional.
\end{Cor}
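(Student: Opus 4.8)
The plan is to deduce this corollary from Theorem~\ref{inversedescribereps2} by showing that the descending chain condition on $E(S)$ forces \emph{every} non-zero $KS$-module to be spectral, and then by translating the character-theoretic parameterization of Theorem~\ref{inversedescribereps2} into the idempotent language of the statement. Recall that Theorem~\ref{inversedescribereps2} already furnishes the desired bijection for spectral simple modules; thus, once the spectral hypothesis is seen to be automatic, the classification of \emph{all} simple $KS$-modules follows at once.

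First I would establish the spectrality of an arbitrary non-zero $KS$-module $V$ with associated representation $\p\colon KS\to \End{K}{V}$. By Proposition~\ref{whenisspectral} it suffices to show that the commuting set of idempotents $\p(E(S))$, which is a semilattice as a homomorphic image of $E(S)$, contains a primitive idempotent. The crux is the claim that $\p(E(S))$ inherits the descending chain condition. To see this, given a descending chain $\p(f_1)\geq \p(f_2)\geq\cdots$, I would set $e_n=f_1f_2\cdots f_n$; then $e_1\geq e_2\geq\cdots$ is a descending chain in $E(S)$ with $\p(e_n)=\p(f_n)$, so the descending chain condition on $E(S)$ forces the original chain to stabilize. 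Since $V$ is non-degenerate and non-zero, not every idempotent can act as $0$ (else $\p(s)=\p(s)\p(s^*s)=0$ for all $s$, contradicting $SV=V$), so $\p(E(S))$ is a non-zero semilattice with descending chain condition and hence has a minimal non-zero, i.e.\ primitive, element. Proposition~\ref{whenisspectral} then yields that $V$ is spectral.

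Finally I would translate. Because $E(S)$ satisfies the descending chain condition, Proposition~\ref{descendingchain}(1) shows every filter on $E(S)$ is principal, so $e\mapsto \chi_{e^{\uparrow}}$ is a bijection $E(S)\to \wh{E(S)}$. Under the spectral action the orbit of $\chi_{e^{\uparrow}}$ is $\{\chi_{f^{\uparrow}}\mid f\D e\}$ and $G_{\chi_{e^{\uparrow}}}=G_e$ (as recorded in the remark following Corollary~\ref{fgmaxsubgroup}); hence orbits of characters correspond to $\D$-classes of idempotents, a set $D\subseteq E(S)$ meeting each $\D$-class once matches a set $D\subseteq \wh{E(S)}$ meeting each orbit once, and the simple $KG_{\chi_{e^{\uparrow}}}$-modules are exactly the simple $KG_e$-modules. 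Substituting these identifications into Theorem~\ref{inversedescribereps2} gives the asserted bijection. For the dimension count, the corresponding module is $\Ind_{\chi_{e^{\uparrow}}}(V)=KL_{\chi_{e^{\uparrow}}}\otimes_{KG_{\chi_{e^{\uparrow}}}}V\cong \bigoplus_{t\in T}(t\otimes V)$ for a transversal $T$ of $L_{\chi_{e^{\uparrow}}}/G_{\chi_{e^{\uparrow}}}$ in bijection with the orbit, so it is finite dimensional exactly when the orbit is finite and $V$ is finite dimensional, i.e.\ when the $\D$-class of $e$ has finitely many idempotents and $V$ is finite dimensional (cf.\ Proposition~\ref{constructirred}).

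The main obstacle is the spectrality step: the descending chain condition is hypothesized on $E(S)$, whereas Proposition~\ref{whenisspectral} requires it (or a primitive idempotent) on the image $\p(E(S))$, so the chain-lifting argument $e_n=f_1\cdots f_n$ together with the non-degeneracy observation is the real content. The remaining translation is bookkeeping built on the stated results.
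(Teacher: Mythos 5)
Your proof is correct and takes essentially the same route the paper intends: the paper states this corollary without an explicit proof, as a direct specialization of Theorem~\ref{inversedescribereps2} via Proposition~\ref{whenisspectral} (spectrality from a primitive idempotent in the image) and Proposition~\ref{descendingchain}(1) together with the identification of orbits of principal characters with $\D$-classes and of $G_{\chi_{e^{\uparrow}}}$ with $G_e$. Your chain-lifting argument $e_n=f_1\cdots f_n$ correctly supplies the one detail the paper leaves implicit, namely that the descending chain condition on $E(S)$ passes to the homomorphic image $\p(E(S))$, so that Proposition~\ref{whenisspectral} applies to every non-zero $KS$-module.
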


\bibliographystyle{abbrv}
\bibliography{standard2}
\end{document}